\newcommand{\even}{{\mathrm{even}}}
\newcommand{\odd}{{\mathrm{odd}}}
\newcommand{\id}{\operatorname{id}}
\newcommand{\cl}{\mathds{C}\ell}
\newcommand{\Hom}{\operatorname{Hom}}
\newcommand{\End}{\operatorname{End}}
\newcommand{\Z}{\mathds{Z}}
\newcommand{\R}{\mathds{R}}
\newcommand{\C}{\mathds{C}}
\renewcommand{\S}{\mathds{S}}
\newcommand{\SO}{\mathsf{SO}}
\newcommand{\Spin}{\mathsf{Spin}}
\newcommand{\Sym}{\operatorname{Sym}}
\newcommand{\dd}{\mathrm d}
\newcommand{\g}{\mathrm g}
\newcommand{\diag}{\mathrm{diag}}
\newcommand{\Ric}{\operatorname{Ric}}
\newcommand{\scal}{\mathrm{scal}}
\newcommand{\II}{\mathrm I\!\mathrm I}
\newtheorem{theorem}{Theorem}[]
\newtheorem{lemma}[theorem]{Lemma}
\newtheorem{proposition}[theorem]{Proposition}
\newtheorem{corollary}[theorem]{Corollary}
\newtheorem{mainthm}{Theorem}
\newtheorem{maincor}[mainthm]{Corollary}
\theoremstyle{definition}
\newtheorem{definition}[theorem]{Definition}
\theoremstyle{remark}
\newtheorem{remark}[theorem]{Remark}
\newtheorem{example}[theorem]{Example}
\numberwithin{equation}{section}
\title[Extremality and rigidity in dimension four]{Extremality and rigidity for\\ scalar curvature in dimension four}
\subjclass{53C21, 53C23, 53C24, 53C27}
\author[R. G. Bettiol]{Renato G. Bettiol}
\address{\!\!\!\begin{tabular}{lll}
CUNY Lehman College & & CUNY Graduate Center \\
Department of Mathematics & & Department of Mathematics \\
250 Bedford Park~Blvd W & & 365 Fifth Avenue \\
Bronx, NY, 10468, USA & & New York, NY, 10016, USA
\end{tabular}
}
\email{r.bettiol@lehman.cuny.edu}
\author[M. J. Goodman]{McFeely Jackson Goodman}
\address{Colby College \newline
\indent Department of Mathematics  \newline
\indent 108 Lovejoy \newline
\indent Waterville, ME, 04901, USA}
\email{mjgoodma@colby.edu}
\numberwithin{equation}{section}
\numberwithin{theorem}{section}
\date{\today}
\begin{document}
\begin{abstract}
Following Gromov, a Riemannian manifold is called area-extremal if any modification that increases scalar curvature must decrease the area of some tangent 2-plane. We prove that large classes of compact 4-manifolds, with or without boundary, with nonnegative sectional curvature are area-extremal. We also show that all regions of positive sectional curvature on 4-manifolds are locally area-extremal. These results are obtained 
analyzing sections in the kernel of a twisted Dirac operator constructed from pairs of metrics, and using the Finsler--Thorpe trick for sectional curvature bounds in dimension 4.
\end{abstract}

\maketitle

\section{Introduction}
A lower bound $\scal\geq c>0$ on the scalar curvature of a Riemannian $n$-manifold does not constrain either its diameter or volume, provided \(n>2\). However, there are other types of upper bounds on the ``size'' of such manifolds, in terms of ``dilations of topologically significant maps'', as explained by Gromov~\cite[Sec.~4]{gromov-dozen}. 
Since shrinking areas of surfaces on manifolds with $\scal>0$ always increases scalar curvature, it is natural to ask in which cases this is the \emph{only} way of doing so.
Motivated by this question, following Gromov~\cite[Sec.~4]{gromov-dozen}, we call a metric $\g_0$ on a closed manifold $M$ \emph{area-extremal} (for scalar curvature) if, for all  metrics $\g_1$ on~$M$,
\begin{equation}\label{eq:g1-competitor-global}
\wedge^2\g_1\succeq\wedge^2\g_0 \quad\text{ and }\quad \scal(\g_1)\geq\scal(\g_0)
\end{equation}
imply that $\scal(\g_1) = \scal(\g_0)$; and \emph{area-rigid} if \eqref{eq:g1-competitor-global} implies $\g_1=\g_0$.
The first condition in \eqref{eq:g1-competitor-global} means that areas measured with \(\g_1\) are no smaller than those measured with \(\g_0\). For instance, this is the case whenever the stronger condition \(\g_1\succeq \g_0\) holds, i.e., whenever distances are no smaller under \(\g_1\) than under \(\g_0\). Thus, area-extremality can be summarized as the impossibility of increasing scalar curvature without shrinking the area of some surface (or  infinitesimal 2-plane).

Round metrics on spheres are area-rigid, by a result of Llarull \cite{llarull1}. Expanding on work of Min-Oo \cite{min-oo-scal}, Goette and Semmelmann \cite{goette-semmelmann2} proved that any metric $\g$ with positive-semidefinite curvature operator ($R\succeq0$) on a manifold with nonzero Euler characteristic is area-extremal, and area-rigid if $\tfrac{\scal}{2}\, \g\succ\Ric \succ0$. Furthermore, Goette and Semmelmann \cite{goette-semmelmann1} proved that K\"ahler metrics with \(\Ric\succeq 0\) are area-extremal, and area-rigid if \(\Ric\succ0\). However, since these previous results only apply to either spheres or manifolds with special holonomy, finding broader criteria for area-extremality/area-rigidity and describing particular classes of these metrics remain important problems \cite[Prob.~C]{gromov-dozen}.

In this paper, we prove new area-extremality and area-rigidity criteria in dimension $4$, relying on a unique characterization of sectional curvature bounds. Namely, by the so-called \emph{Finsler--Thorpe trick}, an orientable Riemannian $4$-manifold has $\sec\geq0$ if and only if there exists a function $\tau$ such that $R+\tau \, *\succeq0$, where \(R\) denotes the curvature operator, and \(*\) the Hodge star operator, each acting on 2-vectors, see \Cref{prop:FTtrick} for details. Other geometric applications of the Finsler--Thorpe trick have recently appeared in \cite{bm-iumj,bkm-siaga,bkm-geography,bk-rf}.

\subsection{Main results}
Our first result is the following extremality/rigidity criterion:

\begin{mainthm}\label{mainthm1}
	Let $(M^4,\g)$ be a closed simply-connected Riemannian manifold with $\sec\geq0$. If $\tau\colon M\to\R$ such that $R+\tau \,*\succeq0$ can be chosen nonpositive or~nonnegative, then $\g$ is area-extremal. If, in addition, $\tfrac{\scal}{2}\, \g\succ\Ric \succ0$, then $\g$ is area-rigid.
\end{mainthm}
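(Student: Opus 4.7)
My plan is to adapt the twisted Dirac operator approach of Llarull, Min--Oo, and Goette--Semmelmann, using the Finsler--Thorpe trick to supply the extra flexibility needed to replace the hypothesis $R \succeq 0$ by $\sec \geq 0$ in dimension four.

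First I would set up the coupled spinorial machinery. Since $M$ is closed and simply-connected, $\chi(M)>0$, so a suitably twisted Dirac operator built from the pair $(\g_0,\g_1)$ will have nontrivial index; when $M$ is not spin, a $\mathsf{Spin}^c$ refinement takes its place. Concretely, I would twist the spinor bundle of $(M,\g_1)$ by a coefficient bundle $E$ assembled from the $\g_0$-spinors, exactly as in Goette--Semmelmann, to obtain a twisted Dirac operator $D_E$ with a nonzero harmonic section $\sigma$ (by Atiyah--Singer). The Weitzenböck formula then reads
\[
D_E^2 \,=\, \nabla^*\nabla \,+\, \tfrac{1}{4}\scal(\g_1) \,+\, \mathcal{R}^E,
\]
with $\mathcal{R}^E$ an explicit algebraic expression involving the curvature operator $R_0$ of $\g_0$ and the self-adjoint endomorphism $A$ defined by $\g_1 = \g_0(A\,\cdot\,,\cdot)$. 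The area condition becomes a pointwise algebraic constraint on $A$; combined with $R_0 \succeq 0$, Goette--Semmelmann show it forces $\mathcal{R}^E \succeq -\tfrac{1}{4}\scal(\g_0)$, and then $0 = \|D_E\sigma\|^2$ together with $\scal(\g_1)\geq\scal(\g_0)$ yields $\scal(\g_1) = \scal(\g_0)$.

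The novelty is that $\sec(\g_0) \geq 0$ only gives $R_0 + \tau\, *\succeq 0$ for some $\tau\colon M \to \R$. In dimension four, $*$ is a parallel involution of $\Lambda^2 TM$ which, via Clifford multiplication, acts on the two half-spinor bundles $\Sigma_\pm$ with opposite signs. My plan is to deform the connection on $E$ by a term proportional to $\tau\,*$ and restrict attention to sections of the appropriate chirality, so that $R_0$ in the Clifford-algebraic formula for $\mathcal{R}^E$ is effectively replaced by $R_0 + \tau\,*$. The Finsler--Thorpe trick then delivers exactly the positivity needed. The sign hypothesis on $\tau$ is what guarantees that one can consistently pick the chirality in which the $\tau\,*$ contribution adds (rather than subtracts) on all of $M$.

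The main obstacle I expect is the sign accounting: one must verify that the modified $\mathcal{R}^E$ splits on $\Sigma_\pm\otimes E$ as the Goette--Semmelmann expression (with $R_0$ replaced by $R_0 + \tau\,*$) plus a manifestly nonnegative remainder, and that the modification preserves the nonvanishing of the harmonic section $\sigma$. Granting this, area-extremality is immediate. For area-rigidity, the strict hypotheses $\tfrac{\scal}{2}\g \succ \Ric \succ 0$ upgrade the Weitzenböck inequality to a pointwise equality forcing both $\nabla \sigma = 0$ and, in the algebraic step, strict positivity wherever $A\neq\id$; this propagates to $A=\id$ throughout $M$ by unique continuation, yielding $\g_1 = \g_0$.
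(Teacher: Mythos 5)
Your high-level plan captures the right framework (twisted Dirac operator, Weitzenb\"ock formula, Finsler--Thorpe trick, exploiting a chirality splitting so that the $\ast$-contribution has a sign), and you correctly identify that the sign hypothesis on $\tau$ is what lets you pick a consistent chirality. However, the mechanism you propose for incorporating $\tau\ast$ is a genuine gap. ``Deforming the connection on $E$ by a term proportional to $\tau\ast$'' is not a well-defined operation: a connection deformation is a $1$-form with values in $\operatorname{End}(E)$, whereas $\tau\ast$ is a function times a symmetric endomorphism of $\Lambda^2 TM$, and there is no natural way to realize $\tau\ast$ as the curvature contribution of a connection change (indeed $\tau$ is a varying function, so $d\tau$ terms would appear; and any such deformation would also alter $\nabla^*\nabla$, whose positivity you then need to re-examine). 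The paper does something cleaner and purely algebraic: it leaves the Dirac operator and connection untouched, writes the Weitzenb\"ock curvature term in the form $\mathcal R(R,L)=\mathcal T(R,L)-\tfrac14\operatorname{tr}(L^*RL)-\tfrac18\scal_R$, and uses the \emph{linearity} of $R\mapsto\mathcal T(R,L)$ to split $\mathcal T(R,L)=\mathcal T(R+\tau\ast,L)-\tau\,\mathcal T(\ast,L)$. Then $\mathcal T(R+\tau\ast,L)\succeq 0$ because $R+\tau\ast\succeq0$, and a separate pointwise computation shows $\mathcal T(\ast,L)$ restricted to $S^+(TN)\otimes S^+(TM)$ is positive-semidefinite; since $\tau\leq 0$, both summands are $\succeq0$ there. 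No connection deformation is needed, and your route does not obviously yield this decomposition.

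A second, related gap is the index input. You assert that $\chi(M)>0$ gives a nontrivial kernel for ``a suitably twisted Dirac operator,'' but the argument requires a kernel in the specific subbundle $S^+(TN)\otimes f^*S^+(TM)$, which is precisely where the $\mathcal T(\ast,L)$ sign is favorable. The relevant twist is by $S^+(TM)$ (not the full spinor bundle as in Goette--Semmelmann), and the index of the resulting operator restricted to $S^+(TN)\otimes E$ combines the Euler and signature characteristic classes, giving $\tfrac14\bigl(-\sigma(N)+\deg(f)(3\sigma(M)+2\chi(M))\bigr)$; for $N=M$, $f=\operatorname{id}$, simple connectivity makes this $1+b_+(M)>0$. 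Without pinning down the coefficient bundle and the corresponding index formula, the existence of the harmonic section in the \emph{correct} chirality component is unjustified. Once both points are fixed, your extremality and rigidity conclusions follow as you describe (the paper's rigidity step is slightly more direct than unique continuation: $\nabla\xi=0$ forces $\|\xi\|$ constant and nonzero, making the pointwise equalities global, and an elementary singular-value argument then shows $\dd f$ is an isometry).
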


As a consequence of \cite[Thm.~D]{bm-iumj}, the $4$-manifolds $(M^4,\g)$ to which \Cref{mainthm1} applies either have
\emph{definite} intersection form, or are isometric to $\S^2\times \S^2$ endowed with a product metric.
By classical work of Donaldson and Freedman, the former are homeomorphic to \(\S^4\) or to a connected sum $\C P^2\#\dots\#\C P^2$ of finitely many copies of $\C P^2$. Conjecturally, such manifolds only admit metrics with
$\sec>0$ if at most one summand is used (i.e., if $M^4$ is either $\S^4$ or $\C P^2$), and with $\sec\geq0$ if at most two summands are used (i.e., if $M^4$ is either $\S^4$, $\C P^2$, or $\C P^2\#\C P^2$). 

Previously known area-extremal metrics on simply-connected $4$-manifolds $M$~are:
\begin{itemize}
	\item metrics with $R\succ0$, hence $M$ is diffeomorphic to $\S^4$, see e.g.~\cite[Thm.~1.10]{wilking-survey};
	\item metrics with $R\succeq0$ (but $R\not\succ0$), hence $M$ is isometric to either a product metric on $\S^2\times \S^2$ or a K\"ahler metric on $\C P^2$, see e.g.~\cite[Thm.~1.13]{wilking-survey};
	\item K\"ahler metrics on $\C P^2\# \overline{\C P^2}$ with \(\Ric\succ 0\), which exist by Yau's solution \cite{yau-calabiconj} to the Calabi conjecture, and are area-rigid by \cite{goette-semmelmann1}. 
\end{itemize}
Thus, using \Cref{mainthm1}, we obtain new examples of area-rigid metrics:

\begin{maincor}\label{maincor:CP2CP2}
	The following hold:
	\begin{enumerate}[\rm (i)]
		\item On $\C P^2$, metrics in a neighborhood of the Fubini--Study metric are area-rigid;
		\item On $\C P^2\# \C P^2$, Cheeger metrics with vanishing neck length are area-rigid.
	\end{enumerate}
\end{maincor}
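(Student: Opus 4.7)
The plan is to apply \Cref{mainthm1} to both classes of metrics. As $\C P^2$ and $\C P^2\#\C P^2$ are simply connected, it suffices to verify (a) $\sec\geq 0$, (b) the existence of a function $\tau\colon M\to\R$ of definite sign with $R+\tau\,*\succeq 0$, and (c) the strict pinching $\tfrac{\scal}{2}\g\succ\Ric\succ 0$.

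For (i), I would argue by stability around $\gFS$. Being K\"ahler--Einstein with positive scalar curvature, $\gFS$ has $\Ric=\tfrac{\scal}{4}\gFS$, so (c) holds strictly. With the orientation making $\C P^2$ self-dual ($W_-=0$), the standard decomposition $R=W_\pm+\tfrac{\scal}{12}\,\id$ on $\Lambda^2_\pm$ combined with the K\"ahler--Einstein eigenvalues $(\tfrac{\scal}{6},-\tfrac{\scal}{12},-\tfrac{\scal}{12})$ of $W_+$ yields eigenvalues $(\tfrac{\scal}{4},0,0)$ on $\Lambda^2_+$ and $\tfrac{\scal}{12}$ with multiplicity three on $\Lambda^2_-$. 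In particular $R\succeq 0$, so $\tau\equiv 0$ satisfies (b). Since (a) and (c) are $C^2$-open, they persist in a neighborhood of $\gFS$. For (b), continuity of the eigenvalues of $R+\tau\,*$ implies that for $\g_1$ close to $\gFS$ the admissible interval $[\tau_-^{\g_1}(p),\tau_+^{\g_1}(p)]$ stays close to $[0,\tfrac{\scal(\gFS)}{12}]$, and the pointwise definition $\tau(p):=\max(0,\tau_-^{\g_1}(p))$ yields a nonnegative admissible function. Applying \Cref{mainthm1} then gives area-rigidity.

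For (ii), Cheeger's construction produces a metric on $\C P^2\#\C P^2$ with $\sec\geq 0$ by Cheeger-deforming two copies of $(\C P^2,\gFS)$ near fixed points of a suitable isometry subgroup, removing small invariant disks, and smoothly gluing the resulting boundaries; vanishing neck length corresponds to direct gluing without inserting a flat cylinder. I would verify, via explicit curvature computation for the Cheeger deformation, that this metric satisfies (b) with $\tau\equiv 0$ (equivalently, $R\succeq 0$) and the strict pinching (c). The Cheeger trick decreases sectional curvatures only in directions tangent to the deforming orbits while preserving nonnegativity, so the favorable curvature structure of $\gFS$ propagates to the glued metric, and \Cref{mainthm1} again yields area-rigidity.

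The main obstacle is the curvature analysis in case (ii): pointwise verification of $R\succeq 0$ and of the strict pinching $\tfrac{\scal}{2}\g\succ\Ric\succ 0$ along the gluing hypersurface, where the Cheeger metric is not homogeneous and is a priori only $C^2$, requires carefully tracking how the deformation modifies the curvature operator of $\gFS$ and how the two halves match across the seam.
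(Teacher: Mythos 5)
For part (i), your approach is essentially correct and close to the paper's. The paper's argument is cleaner: after computing that $R_{\gFS}+\tau\,*\succeq 0$ holds for $\tau\in[0,\scal/12]$, it reverses orientation (so the admissible interval becomes $[-\scal/12,0]$), fixes the \emph{constant} $\tau\equiv -1$ strictly in the interior, and uses the openness of the condition $R_\g+\tau\,*_\g\succ 0$ in the $C^2$-topology to conclude that the same constant $\tau$ works for all nearby metrics. Your pointwise $\tau(p)=\max(0,\tau_-^{\g_1}(p))$ also works (Theorem~\ref{thm} only needs a pointwise sign condition, not continuity of $\tau$), but it is an unnecessary complication.

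For part (ii), your proposal has a genuine and serious gap: the claim that the Cheeger metric on $\C P^2\#\C P^2$ satisfies $R\succeq 0$, i.e., that $\tau\equiv 0$ works, is \emph{false}. The paper explicitly points this out in the introduction: $\C P^2\#\C P^2$ admits no metrics with $R\succeq 0$ (a consequence, e.g., of the Bochner technique on self-dual harmonic $2$-forms, since $b_+(\C P^2\#\C P^2)=2$). Indeed, the paper's explicit curvature computation in Theorem~\ref{thm:cheeger-metrics} (via the Grove--Ziller description of Cheeger metrics) shows that the unique function with $R+\tau\,*\succeq 0$ is $\tau=-\varphi'/(2b^2)$, which is strictly negative wherever $\varphi'>0$, i.e., away from the neck. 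The informal heuristic that "the Cheeger trick preserves $R\succeq 0$" is not correct; Cheeger deformations preserve $\sec\geq 0$ but generally destroy nonnegativity of the curvature operator. In fact, by~\cite{Noronha}, the disk bundle $\nu(\C P^1)$ admits no metric with $R\succeq 0$ and convex boundary, since it is a nontrivial disk bundle over its soul.

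Relatedly, your claim that $\tfrac{\scal}{2}\g\succ\Ric\succ 0$ holds for the Cheeger metric is also not quite right: the paper computes that $\Ric\succ 0$ fails precisely on the neck region (where the metric is a round cylinder), even when the neck length vanishes, in which case the neck is the gluing hypersurface $\S^3\times\{0\}$. Theorem~\ref{thm:cheeger-metrics} therefore only establishes area-rigidity \emph{pointwise} on the dense complement of the neck, and the passage to area-rigidity on all of $\C P^2\#\C P^2$ requires the separate continuity/density argument of Remark~\ref{rem:noneck}: a diffeomorphism that is an isometry on a dense open set is an isometry everywhere. Your proposal does not address this step.

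\end{document}
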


To our knowledge, \Cref{maincor:CP2CP2} (i) gives the first example of an \emph{open set} of area-rigid metrics on a closed manifold other than the sphere. 
Moreover, note that
both (i) and (ii) yield the existence of area-rigid metrics with \emph{generic holonomy}. Recall that \emph{Cheeger metrics} on the connected sum of two compact rank one symmetric spaces are metrics with $\sec\geq0$ obtained gluing complements of disks along a ``neck'' region isometric to a round cylinder $\S^n\times [0,\ell]$ of arbitrary length $\ell\geq0$, see \cite{cheeger}. 
In particular, the neck region does not have $\Ric\succ0$. For this reason, $\ell=0$ is necessary in \Cref{maincor:CP2CP2} (ii), since then $\Ric\succ0$ on the (dense) complement of the neck hypersurface. Meanwhile, Cheeger metrics on $\C P^2\# \C P^2$ with $\ell>0$ are area-extremal but not area-rigid, see \Cref{thm:cheeger-metrics} and \Cref{rem:noneck}.
We note that $\C P^2\# \C P^2$ admits no metrics with $R\succeq0$, nor complex structures, and thus no previous methods can be used to identify area-extremal metrics on this manifold.

As a consequence of \Cref{maincor:CP2CP2} and previous examples,
there are area-extremal metrics with $\sec\geq0$ on all closed simply-connected $4$-manifolds currently known to admit metrics with $\sec\geq0$, namely, $\S^4$, $\C P^2$, $\S^2\times \S^2$, and $\C P^2\# \pm\C P^2$, which are conjectured to be all of them.
Furthermore, metrics in a neighborhood of the round metric on \(\S^4\) have $R\succ0$ and are thus area-rigid by \cite{goette-semmelmann2}, so we conclude
there is an open set of area-rigid metrics with $\sec>0$ on each of the two simply-connected closed $4$-manifolds known (and conjectured to be all) to admit metrics with \(\sec>0\).

\smallskip
There is a natural extension of the above notions of area-extremality and area-rigidity to Riemannian manifolds \emph{with boundary}. 
Given such a manifold $(M,\g)$, we denote by $\II_{\partial M}$ the second fundamental form of $\partial M$ with respect to the inward unit normal, and by $H(\g)=\operatorname{tr}\II_{\partial M}$ the mean curvature of $\partial M$.
A metric $\g_0$ on $M$ is \emph{area-extremal} (for scalar curvature) if all metrics $\g_1$ satisfying \eqref{eq:g1-competitor-global} on $M$ with
\begin{equation}\label{eq:g1-competitor-local}
\g_1|_{\partial M}=\g_0|_{\partial M} \quad\text{ and }\quad H(\g_1)\geq H(\g_0)
\end{equation}
must have $\scal(\g_1) = \scal(\g_0)$ as well as $H(\g_1) = H(\g_0)$; and \emph{area-rigid} if \eqref{eq:g1-competitor-global} and \eqref{eq:g1-competitor-local} imply $\g_1=\g_0$. It is striking that area-extremality, in the above sense for $4$-manifolds with boundary, holds locally around any point with $\sec>0$, as follows:

\begin{mainthm}\label{mainthm-local1}
	If \((X^4,\g)\) is a Riemannian \(4\)-manifold, then $\g$ is area-extremal on sufficiently small convex neighborhoods $M$ of any point in $X$ at which $\g$ has $\sec>0$.
\end{mainthm}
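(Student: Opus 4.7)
The plan is to adapt the closed-manifold argument behind \Cref{mainthm1} to a boundary value problem on $M$, using the strict positivity of $\sec$ to control the Finsler--Thorpe function and the conditions $\g_0|_{\partial M}=\g_1|_{\partial M}$, $H(\g_1)\geq H(\g_0)$ to control the boundary term in the resulting Weitzenb\"ock identity. First, I would shrink $M$ around $p$ until it is a small strictly convex geodesic ball on which $\sec>0$ pointwise. Strict positivity of sectional curvature, combined with \Cref{prop:FTtrick}, yields a whole interval of admissible functions $\tau$ with $R+\tau *\succ 0$; in particular one may pick $\tau$ of definite sign on $\bar M$, placing us within the hypothesis of \Cref{mainthm1} pointwise, but on a domain with boundary rather than a closed manifold.

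Given a competitor $\g_1$ satisfying \eqref{eq:g1-competitor-global} and \eqref{eq:g1-competitor-local} on $M$, I would then reproduce the twisted Dirac construction used for \Cref{mainthm1}: form the bundle $E$ from the pair $(\g_0,\g_1)$ via the identity $(TM,\g_0)\to(TM,\g_1)$, and write down the associated Weitzenb\"ock identity for the twisted operator $D^E$. Its interior term is controlled pointwise by $R+\tau *\succeq 0$ (with $\tau$ of definite sign) together with $\wedge^2\g_1\succeq\wedge^2\g_0$, exactly as in the closed case. Imposing local chirality/MIT-bag type boundary conditions — which are compatible with the construction precisely because $\g_0$ and $\g_1$ induce the same metric on $\partial M$ — the boundary integral reduces after a short computation to a nonpositive multiple of $\int_{\partial M}(H(\g_1)-H(\g_0))|\psi|^2$. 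Hence every section $\psi$ in the kernel of the boundary value problem must be parallel and force both the interior Finsler--Thorpe defect and the mean curvature defect to vanish pointwise, yielding $\scal(\g_1)=\scal(\g_0)$ and $H(\g_1)=H(\g_0)$.

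The main obstacle, and the step requiring the most care, is producing a nonzero section in the kernel of this boundary value problem. On the small convex ball $M$ the index of the twisted Dirac operator with the chosen boundary conditions need not be automatically nonzero, so the topological argument underlying \Cref{mainthm1} does not apply verbatim. I would instead argue by comparison with the flat model: on a small Euclidean ball the corresponding twisted Dirac problem admits explicit parallel spinors as kernel elements, and a perturbation argument — using that after rescaling $M$ the metric $\g_0$ is $C^\infty$-close to a flat metric, and $\g_1$ is forced by $\wedge^2\g_1\succeq\wedge^2\g_0$ and the boundary matching to remain close to $\g_0$ — transfers this nontriviality first to $(M,\g_0)$ and then to the twisted operator for any admissible $\g_1$, completing the proof.
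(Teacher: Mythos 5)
Your overall strategy matches the paper's: shrink $M$ so that $\sec>0$ throughout, use the Finsler--Thorpe trick and the openness of the interval of admissible $\tau$ (for $\sec>0$, \cite[Prop.~3.1]{bkm-siaga}) to fix a continuous $\tau$ of definite sign, orient so $\tau\leq 0$, take $M$ convex, and then run a twisted-Dirac Weitzenb\"ock argument to pump the inequalities \eqref{eq:g1-competitor-global}--\eqref{eq:g1-competitor-local} against a nonzero element of $\ker D_E$. The paper packages this as \Cref{local} plus the final corollary of \Cref{4boundary}, but the curvature estimates you invoke (\Cref{rt,fscal,nneg,star}) are the same.

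The genuine gap is in how you produce the kernel element and in the boundary conditions you impose. The paper does \emph{not} use local chirality/MIT-bag conditions: it uses Atiyah--Patodi--Singer-type spectral conditions built from the intrinsic boundary Dirac operator (see \Cref{lem:section} and \Cref{lott-comparison}), precisely because these are compatible with the bundle $S^+(TN)\otimes f^*S^+(TM)$ forced by the $+\,$--$\,+$ grading that makes \Cref{star} go through, and because they generate, via the self-adjoint endomorphism $A$ in \eqref{eq:BtildeA}, exactly the $\tfrac12\big(H(\g_N)-H(\g_M)\circ f\big)$ term with the \emph{nonnegative} sign needed to close the inequality (you wrote ``nonpositive multiple,'' which is the wrong sign to conclude). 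You would need to verify that bag-type conditions produce the same boundary term in this twisted, mixed-chirality setting --- this is not automatic.

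More seriously, your perturbation argument for nontriviality of $\ker D_E$ does not hold as stated. The hypothesis $\wedge^2\g_1\succeq\wedge^2\g_0$ together with $\g_1|_{\partial M}=\g_0|_{\partial M}$ does \emph{not} force $\g_1$ to be $C^\infty$-close to $\g_0$ in the interior, even after rescaling; e.g.\ $\g_1$ can be made arbitrarily large in the interior while matching $\g_0$ on $\partial M$. What saves the argument is homotopy invariance of the Fredholm index under deformations that fix the boundary conditions (as the paper invokes via \cite{Grubb}), together with the fact that the set of admissible $\g_1$ is convex, not any literal closeness of $\g_1$ to $\g_0$. And even then one must actually \emph{compute} the index and show it is strictly positive (not merely show some flat model has a parallel spinor --- one needs $\mathrm{ind}>0$ so that nontriviality of the kernel persists). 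The paper's \Cref{lem:section} does this by the APS index and signature theorems, obtaining index $\tfrac14\big({-}\sigma(N)+2\chi(M)+3\sigma(M)+2b_0(\partial M)+2b_2(\partial M)\big)=1$ for $M\cong N\cong D^4$. Your sketch neither identifies the correct boundary condition for the $S^+\otimes S^+$ bundle nor supplies an index computation establishing positivity, so the kernel step is a real gap rather than a cosmetic difference in route.
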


In particular, \Cref{mainthm-local1} implies that any metric deformation supported in a sufficiently small convex subset $M$ of a $4$-manifold with $\sec>0$ either preserves both $\scal$ and $H$, or else decreases the area of some surface in $M$, or $\scal$ somewhere on $M$, or $H$ somewhere on $\partial M$.  On the other hand, recall that for generic $\g$, \emph{any} sufficiently small deformation of the function $\scal|_M$ is realized as the (restriction to $M$ of the) scalar curvature of a metric near $\g$, by a result of Corvino~\cite{corvino}.

\Cref{mainthm-local1} follows from an extension of \Cref{mainthm1} to $4$-manifolds $(M^4,\g)$ with convex boundary, i.e., with $\II_{\partial M}\succeq0$. (Note that a geodesic ball of sufficiently small radius in any Riemannian manifold has convex boundary.)

\begin{mainthm}\label{mainthm-local2}
	Let $(M^4,\g)$ be an orientable compact Riemannian $4$-manifold with  $\sec\geq0$ and $\II_{\partial M}\succeq0$. If $\tau\colon M\to \R$ such that $R+\tau\,*\succeq0$ can be chosen nonpositive or nonnegative, then $\g$ is area-extremal. If, in addition, $\frac{\scal}{2}\,\g\succ\Ric\succ0$, then $\g$ is area-rigid.
\end{mainthm}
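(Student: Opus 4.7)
The plan is to adapt the twisted Dirac operator argument underlying \Cref{mainthm1} to the boundary setting. As in the closed case, given two metrics $\g_0, \g_1$ on $M^4$ satisfying the hypotheses, I would form the bundle $E=\Sigma_{\g_0}\otimes F$, where $\Sigma_{\g_0}$ is the complex spinor bundle of $\g_0$ and $F$ is the auxiliary hermitian bundle with connection built from the ``square root'' endomorphism $A\colon TM\to TM$ that encodes the $\wedge^2\g_1\succeq\wedge^2\g_0$ relation (concretely, $F$ is a copy of $\Sigma_{\g_1}$ pulled back via $A$, or its $\pm$-chirality subbundle). The associated twisted Dirac operator $D$ then satisfies the Lichnerowicz--Weitzenb\"ock identity
\[
D^2=\nabla^*\nabla+\tfrac{\scal(\g_0)}{4}+\mathcal R^F,
\]
where the curvature term $\mathcal R^F$ is controlled pointwise by $R+\tau\,*$ together with the eigenvalues of $A$. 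The sign condition on $\tau$ and the Finsler--Thorpe trick (\Cref{prop:FTtrick}) are precisely what make $\tfrac{\scal(\g_1)-\scal(\g_0)}{4}+\mathcal R^F\succeq0$ in dimension 4, just as in \Cref{mainthm1}.

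Since $\g_1|_{\partial M}=\g_0|_{\partial M}$, the boundary spinor bundles for $\g_0$ and $\g_1$ agree on $\partial M$, so $A|_{\partial M}$ restricts to the identity on $T\partial M$ and $E$ admits the standard chirality boundary projection $P$ built from Clifford multiplication by the inward unit normal $\nu$. I would impose the local elliptic boundary condition $P\sigma|_{\partial M}=\sigma|_{\partial M}$; this is the analogue, in our twisted setting, of the conditions used by Hijazi--Montiel--Rold\'an, B\"ar--Ballmann, and in recent comparison work of Lott and Br\"auer--Hirsch--Johne. With this condition $D$ is Fredholm and self-adjoint, and its index reduces to the same topological invariant (an Euler characteristic or Hirzebruch signature contribution) that produces a nontrivial kernel in the closed case of \Cref{mainthm1}, giving a nonzero harmonic section $\sigma\in\ker D$.

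Integrating $0=\langle D^2\sigma,\sigma\rangle$ by parts produces a bulk integral of $|\nabla\sigma|^2+\big\langle\bigl(\tfrac{\scal(\g_0)}{4}+\mathcal R^F\bigr)\sigma,\sigma\big\rangle$ plus a boundary term involving $\II_{\partial M}$ and the difference $H(\g_0)-H(\g_1)$ paired with $\sigma$ and $P\sigma$. The chirality boundary condition, combined with $\g_1|_{\partial M}=\g_0|_{\partial M}$, is designed so that this boundary integrand becomes a nonnegative combination of $H(\g_1)-H(\g_0)\geq0$ and a contraction of $\II_{\partial M}\succeq0$ with a pointwise nonnegative tensor in $\sigma$. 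After using $\scal(\g_1)\geq\scal(\g_0)$ in the bulk as above, both integrands are $\geq 0$, forcing each to vanish. This simultaneously yields $\scal(\g_1)=\scal(\g_0)$, $H(\g_1)=H(\g_0)$, and $\nabla\sigma=0$; under the additional hypothesis $\tfrac{\scal}{2}\g\succ\Ric\succ0$, parallelism of a nontrivial twisted spinor forces $A=\id$ by the same eigenvalue analysis as in \Cref{mainthm1}, hence $\g_1=\g_0$.

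The main obstacle I expect is the boundary analysis: first, verifying that the chirality condition $P\sigma=\sigma$ is genuinely elliptic and self-adjoint for the twisted operator (which requires the commutation of $P$ with Clifford multiplication by $\nu$ after twisting by $A$, and uses $A|_{\partial M}=\id_{T\partial M}$), and second, pinning down the boundary integrand so that both hypotheses $\II_{\partial M}\succeq 0$ and $H(\g_1)\geq H(\g_0)$ enter with the correct sign without additional geometric assumptions on $\partial M$ (e.g., without forcing it to be totally geodesic). The interior Weitzenb\"ock estimate, the index computation, and the final rigidity step should closely parallel the corresponding arguments in the proof of \Cref{mainthm1}.
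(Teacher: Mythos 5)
Your interior Lichnerowicz--Weitzenb\"ock estimate and the Finsler--Thorpe pointwise analysis are aligned with the paper's approach, and the overall strategy (find a harmonic twisted spinor, integrate by parts, use the curvature estimate, handle a boundary term) is the right template. However, there are two genuine gaps in the boundary part of the argument.

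First, the local chirality condition $P\sigma|_{\partial M}=\sigma|_{\partial M}$ built from Clifford multiplication by $\nu$ is not available here. The relevant twisted operator acts on $S^+(TN)\otimes f^*S^+(TM)$ and is essentially a signature-type operator (indeed its index contains the $\frac34\sigma(M)-\frac14\sigma(N)$ contribution), and such operators do not admit local elliptic boundary conditions --- this is the Atiyah--Bott obstruction, which is precisely the reason Atiyah--Patodi--Singer introduced global spectral boundary conditions. The paper sidesteps this by imposing APS-type conditions $P_{>0}(\xi|_{\partial N})=0$, where $P_{>0}$ projects onto the positive eigenspaces of an intrinsic boundary operator $B$ built from $\nabla^{\partial N}$ and $f^*\nabla^{\partial M}$; the index is then computed via the APS Index Theorem combined with the APS Signature Theorem (to cancel the $\eta$-invariant contributions and identify $h(B)=b_0(\partial M)+b_2(\partial M)$). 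Your claim that the chirality condition is elliptic and that ``its index reduces to the same topological invariant that produces a nontrivial kernel in the closed case'' is therefore unjustified and, in light of the Atiyah--Bott obstruction, false for this operator. The works you cite (Cecchini--Zeidler, B\"ar--Brendle--Hanke--Wang, etc.) use local conditions for honest $\widehat A$-type Dirac operators, where the obstruction is absent, not for signature-type operators.

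Second, even granting a working index formula, you do not verify that the resulting topological quantity is positive for a general orientable compact $(M^4,\g)$ with $\sec\geq0$ and $\II_{\partial M}\succeq0$. This is not automatic: in the paper's proof one obtains the condition $2\chi(M)+3\sigma(M)+2b_0(\partial M)+2b_2(\partial M)>\sigma(N)$, and verifying it requires invoking the Soul Theorem to realize $M$ as a disk bundle over a closed totally geodesic $\Sigma\subset M$ with $\dim\Sigma\leq3$, followed by a case-by-case analysis of the possible souls. Without some such argument, the existence of a nontrivial harmonic section --- the engine of the whole proof --- is not established. Finally, on the boundary integrand, the paper's term $A$ splits as $-\tfrac12 H(\g_N)-\mathcal R(Q,L')$ with $Q\succeq0$ encoding $\II_{\partial M}$, and the estimate uses both $\II_{\partial M}\succeq0$ (to get $\mathcal T(Q,L')\succeq0$ via \Cref{nneg}) and $\operatorname{tr}Q=H(\g_M)$ via \Cref{fscal}; your outline correctly anticipates that both hypotheses enter, but with incorrect boundary conditions the sign structure you describe would need to be re-derived.
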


Similar criteria for manifolds with boundary have been recently proven in all even dimensions under the more restrictive assumption $R\succeq0$.
Lott \cite{lott-spin} showed that metrics with $R\succeq0$ and $\II_{\partial M}\succeq0$ on compact manifolds with boundary and nonzero Euler characteristic are area-extremal (in a more general sense; namely, relaxing the first condition in \eqref{eq:g1-competitor-local} to \(\g_1|_{\partial M}\succeq \g_0|_{\partial M}\)).
Cecchini and Zeidler \cite{CZ} obtained area-extremality results for certain warped product metrics on \(M\times [-1,1],\) where \(M\) has $R\succeq0$ and nonvanishing Euler characteristic; those results have recently been extended in \cite{BBHW}. 
It is noteworthy that \Cref{mainthm-local2} does not require \(M\) to be simply-connected, nor to have nonzero Euler characteristic. Indeed, by the Soul Theorem, any $(M^4,\g)$ as in \Cref{mainthm-local2} is a disk bundle over a totally geodesic closed submanifold, whose topology is constrained by the fact it has $\sec\geq0$ and dimension $\leq3$. In turn, this has topological implications on $M$ which are sufficient to apply our methods and prove area-extremality.

\Cref{mainthm-local2} yields new examples of area-extremal and area-rigid metrics; e.g., the metrics on the complement of a ball in $\C P^2$ used in the construction of Cheeger metrics on \(\C P^2\# \C P^2\), described above. 
No other previous criteria apply to this manifold, since it is diffeomorphic to the normal disk bundle of \(\C P^1\subset\C P^2\), which is known not to admit metrics with $R\succeq0$ and $\II_{\partial M}\succeq0$, see~\cite{Noronha}.
Examples with vanishing Euler characteristic are provided by standard metrics with $R\succeq0$ on products of spheres and disks, see \Cref{ex:examples-local}.
With different boundary conditions, such examples are also addressed in the related result~\cite[Thm.~1.3]{lott-spin}. Finally, note that the round hemisphere $\S^4_+$ is area-rigid as a consequence of \Cref{mainthm-local2}, or \cite[Cor.~1.2]{lott-spin}, so the counterexamples to the Min-Oo conjecture constructed in \cite{bmn} must shrink areas somewhere on $\S^4_+$, a fact that was shown in~\cite{miao-tam}. 

\smallskip
Both Theorems \ref{mainthm1} and \ref{mainthm-local2} follow from more general results (\Cref{thm,local}), in which we prove area-extremality/area-rigidity in a broader sense, also discussed in \cite{llarull1, goette-semmelmann1,gromov-dozen,lott-spin}. Namely, metrics can be compared similarly to \eqref{eq:g1-competitor-global} and \eqref{eq:g1-competitor-local} but using maps other than the identity, including maps between different manifolds, which Gromov describes as allowing for competitors with ``topological modifications''. Indeed, Theorems \ref{mainthm1} and \ref{mainthm-local2} are simplified versions of such statements on comparisons with self-maps of nonzero degree, see \Cref{cor:self-maps,cor:boundary-self}.

\subsection{Outline of proofs}
For the reader's convenience, we briefly describe a general framework to prove area-extremality/area-rigidity based on spin geometry, which is used in the above results.
Let $(M,\g_0)$ and $(N,\g_1)$ be oriented Riemannian manifolds and \(f\colon N\to M\) be a spin map; e.g., one may take $N=M$ and $f=\id$; see \Cref{4closed} for details. Consider the Dirac operator \(D(\g_0,\g_1)\) on spinors over \(N\) twisted with the pullback bundle via $f$ of the spinor bundle over \(M.\) 

The Bochner--Lichnerowicz--Weitzenb\"ock formula for \(D(\g_0,\g_1)\) is given by 
\[D(\g_1,\g_0)^2=\nabla^*\nabla+\tfrac14\scal(\g_1)+\mathcal{R}(R,\dd f),\]
where \(\mathcal R(R,\dd f)\) is a bundle endomorphism that depends only on the curvature operator \(R\) of \((M,\g_0)\) and  \(\dd f\colon TN\to TM\). Algebraic considerations show that
\begin{equation*}
\mathcal{R}(R,\dd f)=\mathcal{T}(R,\dd f)-\tfrac14 \operatorname{tr}(R)\circ f-\tfrac14\operatorname{tr}\!\big( F^*\circ R\circ F\big)\circ f,
\end{equation*}
where $F\colon\wedge^2 TN\to\wedge^2 TM$ is the map $F(v\wedge w)=\dd f(v)\wedge \dd f(w)$, and $\mathcal{T}(R,\dd f)\succeq0$ whenever $R\succeq0$, see \Cref{rt,nneg}. For instance, if \(f=\id\) and \(\g_1=\g_0\), then \(\mathcal{T}(R,\dd f)\) is the curvature term in the Weitzenb\"ock formula for the Hodge Laplacian on forms. Further algebraic considerations (see \Cref{fscal}) show that if \(\wedge^2\g_1\succeq f^*\wedge^2\g_0\) and $R$ has \(\sec\geq0\), then
\(
\operatorname{tr}\!\big(F^*\circ R\circ F\big)\leq\operatorname{tr}(R)=\tfrac12\scal(\g_0)\).

We thus search for conditions on \(R\) such that \(\sec\geq0\) and \(\mathcal T(R,\dd f)\succeq0\) whenever \(\wedge^2\g_1\succeq f^*\wedge^2\g_0\), since then
\[D(\g_0,\g_1)^2\succeq \nabla^*\nabla+\tfrac14\big(\scal(\g_1)-\scal(\g_0)\circ f\big)\]
and so \(\scal(\g_1)\geq\scal(\g_0)\circ f\) forces \(\scal(\g_1)=\scal(\g_0)\circ f\) if the underlying topologies imply, by way of the Atiyah--Singer Index Theorem, that \(\ker D(\g_1,\g_0)\neq\{0\}\).

In \Cref{pi}, we show that the curvature assumption in \Cref{mainthm1} ensures that \(\mathcal T(R,\dd f)\succeq0\), up to restricting this endomorphism to an appropriate subbundle. In \Cref{indextheory}, we give topological conditions on $M$ and $N$ sufficient to have a nontrivial section \(\xi\) of that subbundle with \(D(\g_0,\g_1)\xi=0\).
In the earlier area-extremality/area-rigidity works mentioned above, this topological condition is the nonvanishing of the Euler characteristic, and the corresponding grading is used on the tensor product of spinor bundles.
However, the specific subbundle stemming from computations in \Cref{pi} requires us to use a novel method, combining the so-called ``Euler characteristic" and ``signature" gradings of the tensor product of spinor bundles, an idea reminiscent of \cite[Rmk.~2.3]{goette-semmelmann2}. We then use \(\xi\) to prove our most general area-extremality and area-rigidity result for closed manifolds, \Cref{thm}, which in turn implies \Cref{mainthm1}. In \Cref{examples}, we describe metrics satisfying the required curvature assumption, proving \Cref{maincor:CP2CP2}.  

Following \cite{lott-spin}, the above method is extended to manifolds with boundary in \Cref{4boundary}, proving \Cref{local} along with two corollaries, which imply Theorems \ref{mainthm-local1} and \ref{mainthm-local2}. 
The convexity assumption $\II_{\partial M}\succeq0$ is used to ensure that \(\nabla^*\nabla\succeq0\), while the existence of a nontrivial section $\xi$ of the appropriate subbundle involves an application of the Atiyah--Patodi--Singer Index Theorem for manifolds with boundary.

\subsection*{Acknowledgements} It is our great pleasure to thank Anusha Krishnan, Claude LeBrun, and John Lott for valuable conversations regarding Grove--Ziller metrics, K\"ahler metrics, and Index Theory on manifolds with boundary, respectively. We also thank the referee for the careful reading of our paper and insightful suggestions.

The first-named author is supported by the National Science Foundation, through grant DMS-1904342 and CAREER grant DMS-2142575. The second-named author is supported by the National Science Foundation, through grant DMS-2001985.

\section{Preliminaries}\label{prelim}

In this section, we fix conventions, definitions, and notations, and recall basic facts from linear algebra and spin geometry, closely following \cite{lm-book}. Throughout, $V$ and $W$ denote (finite-dimensional) oriented real inner product spaces.

\subsection{Linear algebra}\label{subsec:linalgebra}
Given a linear map $l\colon W\to V$, its \emph{adjoint} $l^*\colon V\to W$ is the linear map such that $\langle l(w),v\rangle_V = \langle w,l^*(v)\rangle_W$ for all $v\in V$ and $w\in W$.
The space of linear maps $l\colon W\to V$ is denoted $\Hom(W,V)$, and, if $V=W$, we write $\End(V)=\Hom(V,V)$. We identify $\Hom(W,V)=W\otimes V$ by means of $(w\otimes v)(\cdot)=\langle w,\cdot\rangle v$. The subspaces of \emph{symmetric} and \emph{skewsymmetric} endomorphisms of $V$, i.e., $l\in\End(V)$ such that $l^*=l$ and $l^*=-l$, are denoted $\Sym^2(V)\subset \End(V)$ and $\wedge^2 V\subset \End(V)$, respectively, and $\End(V)=\Sym^2(V)\oplus\wedge^2 V$. The special orthogonal group of $V$, i.e., the group of linear isometries of $V$ is denoted $\SO(V)\subset \Sym^2(V)$.

All of $\End(V)$, $\Sym^2(V)$, $\wedge^2 V$, $\Hom(W,V)$, $\Hom(\wedge^2 W,\wedge^2 V)$, etc., are endowed with the compatible inner products determined by those in $V$ and $W$. 
For example, if a linear map $L\colon \wedge^2 W\to\wedge^2 V$ is of the form $L=\wedge^2 l$ for some $l\colon W\to V$,~i.e.,
\begin{equation}\label{eq:L}
\phantom{, \quad\text{ for all } w_1,w_2\in W}
    L(w_1\wedge w_2)=l(w_1)\wedge l(w_2), \quad\text{ for all } w_1,w_2\in W,
\end{equation}
then its adjoint \(L^*\colon \wedge^2V\to \wedge^2W\) is given by $L^*=\wedge^2 (l^*)$. 

We will make repeated use of the following elementary fact from Linear Algebra:

\begin{lemma}[Singular Value Decomposition]\label{lemma:diag}
    Given any linear map $l\colon W\to V$ between real inner product spaces of the same dimension, there exist orthonormal bases $\{w_i\}$ of $W$ and $\{v_i\}$ of $V$, and real numbers $\lambda_i\geq0$, such that $l(w_i)=\lambda_i\,v_i$.
\end{lemma}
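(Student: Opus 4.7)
The plan is to reduce this to the spectral theorem for symmetric endomorphisms applied to $l^*\circ l\colon W\to W$. First I would observe that $l^*\circ l\in\Sym^2(W)$ is positive semidefinite, since $\langle (l^*\circ l)(w),w\rangle_W=\langle l(w),l(w)\rangle_V=\|l(w)\|_V^2\geq 0$. By the spectral theorem, there exists an orthonormal basis $\{w_i\}$ of $W$ such that $(l^*\circ l)(w_i)=\mu_i w_i$ with $\mu_i\geq 0$, and I would set $\lambda_i:=\sqrt{\mu_i}\geq 0$.

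Next I would construct the basis $\{v_i\}$ of $V$. For indices $i$ with $\lambda_i>0$, define $v_i:=\lambda_i^{-1}\,l(w_i)$. These vectors are orthonormal in $V$, since
\begin{equation*}
\langle v_i,v_j\rangle_V=\frac{1}{\lambda_i\lambda_j}\langle l(w_i),l(w_j)\rangle_V=\frac{1}{\lambda_i\lambda_j}\langle w_i,(l^*\circ l)(w_j)\rangle_W=\frac{\mu_j}{\lambda_i\lambda_j}\langle w_i,w_j\rangle_W=\delta_{ij}.
\end{equation*}
For indices $i$ with $\lambda_i=0$, one has $\|l(w_i)\|_V^2=\langle w_i,(l^*\circ l)(w_i)\rangle_W=\mu_i=0$, so $l(w_i)=0$ and the identity $l(w_i)=\lambda_i v_i$ holds for any choice of $v_i\in V$. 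I would then extend the already orthonormal family $\{v_i:\lambda_i>0\}$ to an orthonormal basis of $V$ by the Gram--Schmidt process, which is possible because $\dim V=\dim W$; this assigns the remaining $v_i$ (for $\lambda_i=0$) and completes the desired decomposition $l(w_i)=\lambda_i v_i$.

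The argument is entirely routine, so there is no real obstacle; the only point that deserves a line of care is verifying orthonormality of the $v_i$ associated with nonzero singular values, which is done via the computation above. A small remark to include, if useful for orientation conventions elsewhere in the paper, is that by possibly flipping the sign of one $v_i$ the bases $\{w_i\}$ and $\{v_i\}$ can be chosen to respect prescribed orientations on $W$ and $V$, at the cost of allowing one of the $\lambda_i$ to be nonpositive; however, the statement as given makes no orientation requirement, so this is not needed.
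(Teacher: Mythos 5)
Your proof is correct and is essentially the same as the paper's: the operator $A$ defined there by $\langle Ax,y\rangle_W=\langle l(x),l(y)\rangle_V$ is exactly your $l^*\circ l$, and both arguments diagonalize it by the spectral theorem, take square roots of the eigenvalues, normalize the nonzero images $l(w_i)$, and extend to an orthonormal basis of $V$.
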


\begin{proof}
Let $A\in\Sym^2(W)$ be the linear map such that $\langle A x,y\rangle_W=\langle l(x),l(y)\rangle_V$ for all $x,y\in W$. Clearly, $A$ is symmetric and positive-semidefinite, so it can be diagonalized by an orthonormal basis $\{w_i\}$ of $W$, on which $A=\diag(\lambda_i^2)$ for some $\lambda_i\geq 0$. By the above, $\{l(w_i)\}$ are pairwise orthogonal vectors in $V$ that span the image $l(W)$.  The unit-length vectors corresponding to the nonzero $l(w_i)$ form a set that can be extended to an orthonormal basis $\{v_i\}$ of $V$, and, by construction, $l(w_i)=\lambda_i\,v_i$.
\end{proof}

We say that a linear map $l\colon W\to V$ is \emph{nonincreasing} if $\|l(w)\|\leq \|w\|$ for all $w\in W$, or, equivalently, if all $\lambda_i\geq0$ arising from \Cref{lemma:diag} satisfy $\lambda_i\leq 1$. Note that if $l\colon W\to V$ is nonincreasing, then so is $L=\wedge^2l\colon \wedge^2 W\to\wedge^2 V$ given in \eqref{eq:L}.

\subsection{Algebraic curvature operators}
For convenience, we treat endomorphisms $R\in\Sym^2(\wedge^2 V)$ both as a symmetric linear maps $R\colon \wedge^2 V\to \wedge^2 V$ and as linear maps
$V\times V\ni (x,y)\mapsto R_{x,y}\in \wedge^2 V\subset \End(V)$ such that $R_{x,y}=-R_{y,x}$, where
\begin{equation}\label{eq:curvop-sign}
	\phantom{\quad\text{for all}x,y,z,w\in V.}
	\langle R_{x,y}(z),w\rangle=\langle R(x\wedge y),w\wedge z\rangle, \quad\text{ for all }x,y,z,w\in V.
\end{equation}
The linear subspace of $\Sym^2(\wedge^2 V)$ formed by those $R$ that satisfy the \emph{first Bianchi identity} $R_{x,y}(z)+R_{y,z}(x)+R_{z,x}(y)=0$ is denoted $\Sym^2_b(\wedge^2 V)$, and its elements are called \emph{algebraic curvature operators}. These are pointwise models at each $V=T_pM$ for the curvature tensor/operator $R$ of a Riemannian manifold $(M,\g)$. Accordingly, the \emph{sectional curvature} of a $2$-plane $\sigma=x\wedge y\in\wedge^2 V$ with respect to $R$ is
\begin{equation*}
\sec_R(\sigma)=\langle R_{x,y}(y),x\rangle=\langle R(\sigma),\sigma\rangle ,
\end{equation*}
while $\Ric_{R}(x,y)$ is the trace of the endomorphism $z\mapsto R_{z, x}(y)$, and $\scal_{R}=2\operatorname{tr} R$. As usual, by $\sec_R\geq0$ and $\Ric_R\succeq0$ we mean $\sec_R(\sigma)\geq0$ for all $2$-planes $\sigma\subset V$ and $\Ric_R(x,x)\geq0$ for all $x\in V$, respectively; similarly for $\sec_R>0$ and $\Ric_R\succ0$.

The orthogonal complement of $\Sym^2_b(\wedge^2 V)$ in $\Sym^2(\wedge^2 V)$ can be identified with $\wedge^4 V$, where $\omega\in\wedge^4 V\subset\Sym^2(\wedge^2 V)$ is given by $\langle \omega(\alpha),\beta \rangle=\langle \omega,\alpha\wedge\beta\rangle$. In particular, if $\dim V=4$, this is a one-dimensional space spanned by the Hodge star operator $*\colon\wedge^2 V\to\wedge^2 V$.
Moreover, $\sigma\in\wedge^2V$ satisfies 
$\sigma\wedge\sigma=0$ if and only if $\langle *\sigma,\sigma\rangle=0$, i.e., the quadric defined by $*$ in $\wedge^2 V$ is precisely the Pl\"ucker embedding of the oriented Grassmannian of $2$-planes $\operatorname{Gr}_2^+(V)\subset\wedge^2V$.
As shown by Finsler~\cite{finsler}, a quadratic form $\langle R(\sigma),\sigma\rangle$ is nonnegative when restricted to the quadric $\langle *\sigma,\sigma\rangle=0$ if and only if some linear combination of $R$ and $*$ is positive-semidefinite, yielding:

\begin{proposition}[Finsler--Thorpe trick]\label{prop:FTtrick}
Let $R\in \Sym^2_{b}(\wedge^2 V)$ be an algebraic curvature operator on $V$, with $\dim V=4$. Then $\sec_R\geq0$, respectively $\sec_R>0$, if and only if there exists $\tau\in\R$ such that $R+\tau\, *\succeq0$, respectively $R+\tau\, *\succ0$.
\end{proposition}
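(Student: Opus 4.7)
The $(\Leftarrow)$ direction is immediate from the Pl\"ucker description recalled in the paragraph preceding the statement: if $R+\tau\,*\succeq 0$ and $\sigma=x\wedge y$ is any unit $2$-plane, then $\sigma\wedge\sigma=0$, equivalently $\langle *\sigma,\sigma\rangle=0$, so
\begin{equation*}
\sec_R(\sigma)\,=\,\langle R\sigma,\sigma\rangle\,=\,\big\langle (R+\tau\,*)\sigma,\sigma\big\rangle\,\geq\,0,
\end{equation*}
and strictly positive whenever $R+\tau\,*\succ 0$.

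For $(\Rightarrow)$, my plan is to study the smallest-eigenvalue function
\begin{equation*}
\lambda(\tau)\,:=\,\min_{|\sigma|=1}\big\langle (R+\tau\,*)\sigma,\sigma\big\rangle.
\end{equation*}
As the infimum of a family of affine functions of $\tau$, $\lambda$ is concave and continuous on $\R$. Because $\dim V=4$, the Hodge star $*$ has both $+1$ and $-1$ as eigenvalues on $\wedge^2 V$ (each with multiplicity $3$), so evaluating $\lambda(\tau)$ on a unit eigenvector of $*$ whose eigenvalue has sign opposite to $\tau$ forces $\lambda(\tau)\to -\infty$ as $|\tau|\to\infty$. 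Consequently, $\lambda$ attains its supremum at some finite $\tau_0\in\R$, and it suffices to prove $\lambda(\tau_0)\geq 0$. Granting this, the nonstrict conclusion follows, and the strict conclusion is obtained by upgrading $\sec_R>0$ to $\sec_R\geq\varepsilon>0$ via compactness of $\operatorname{Gr}_2^+(V)$ and applying the nonstrict version to $R-\tfrac{\varepsilon}{2}\,\id$ (which is a valid algebraic curvature operator, as $\id\in\Sym^2_b(\wedge^2 V)$ corresponds to constant curvature $\tfrac12$).

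To establish $\lambda(\tau_0)\geq 0$, let $E\subset\wedge^2V$ be the lowest eigenspace of $R+\tau_0\,*$. Standard perturbation theory for symmetric linear paths gives the one-sided derivatives
\begin{equation*}
\lambda'(\tau_0^+)\,=\,\min_{\sigma\in E,\,|\sigma|=1}\langle *\sigma,\sigma\rangle,\qquad \lambda'(\tau_0^-)\,=\,\max_{\sigma\in E,\,|\sigma|=1}\langle *\sigma,\sigma\rangle,
\end{equation*}
and the maximality of $\tau_0$ forces $\lambda'(\tau_0^+)\leq 0\leq\lambda'(\tau_0^-)$. If $\dim E\geq 2$, the unit sphere of $E$ is connected, so the intermediate value theorem produces a unit $\sigma_0\in E$ with $\langle *\sigma_0,\sigma_0\rangle=0$; if $\dim E=1$, the two extrema coincide and must both vanish. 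In either case, $\sigma_0\wedge\sigma_0=0$, so $\sigma_0$ is a unit $2$-plane by the Pl\"ucker description, and
\begin{equation*}
\lambda(\tau_0)\,=\,\big\langle (R+\tau_0\,*)\sigma_0,\sigma_0\big\rangle\,=\,\langle R\sigma_0,\sigma_0\rangle\,=\,\sec_R(\sigma_0)\,\geq\,0
\end{equation*}
by hypothesis, as required.

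The main technical input is the variational formula for the one-sided derivatives of the minimum eigenvalue along a linear path, together with the case split on $\dim E$; this is classical for symmetric matrices but is genuinely the crux of the argument, since it is what converts the \emph{constrained} nonnegativity of $\langle R\sigma,\sigma\rangle$ (on the Pl\"ucker quadric) into the \emph{unconstrained} nonnegativity of $\langle (R+\tau_0\,*)\sigma,\sigma\rangle$. Everything else reduces to the spectral decomposition of $*$ and the Pl\"ucker characterization of decomposable bivectors in dimension four.
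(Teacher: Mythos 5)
The paper does not supply a proof of \Cref{prop:FTtrick}: it attributes the statement to Finsler \cite{finsler} (rediscovered by Thorpe \cite{Thorpe72}) and cites it as known. Your proof is a complete and correct self-contained argument, following the standard perturbation-theoretic route to the $S$-lemma: the $(\Leftarrow)$ direction is the immediate Pl\"ucker observation, and the $(\Rightarrow)$ direction maximizes the concave, coercive function $\lambda(\tau)=\lambda_{\min}(R+\tau\,*)$, uses the one-sided derivative formulas in terms of the bottom eigenspace $E$, and applies the intermediate value theorem on the unit sphere of $E$ (or the degenerate case $\dim E=1$) to produce a decomposable unit $\sigma_0\in E$, whence $\lambda(\tau_0)=\sec_R(\sigma_0)\geq0$. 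The reduction of the strict case to the nonstrict one by replacing $R$ with $R-\tfrac{\varepsilon}{2}\,\id$ is also sound, since $\id\in\Sym^2_b(\wedge^2V)$. The one small slip is the parenthetical claim that $\id$ has constant curvature $\tfrac12$: under the paper's conventions $\sec_R(\sigma)=\langle R\sigma,\sigma\rangle$, so $\id$ corresponds to $\sec\equiv1$; this is immaterial, as only $\id\in\Sym^2_b(\wedge^2V)$ is used. What your argument buys over the paper's citation is a short, elementary proof that makes transparent exactly where $\dim V=4$ enters: both that $*$ has eigenvalues of both signs (giving coercivity of $\lambda$) and that the Pl\"ucker quadric $\langle *\sigma,\sigma\rangle=0$ characterizes decomposable $2$-vectors.
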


\begin{remark}
The above has been referred to as \emph{Thorpe's trick}, as it was rediscovered by Thorpe~\cite{Thorpe72}, see~\cite{bkm-siaga} for details. 
In the mathematical optimization and control literature, this fact is known as \emph{$S$-lemma}, or \emph{$S$-procedure}, see \cite{slemma-survey}.
\end{remark}

It is an easy consequence of convexity that the set of $\tau\in\R$ such that $R+\tau\,*\succeq0$ for a fixed $R\in \Sym^2_{b}(\wedge^2 V)$ with $\sec_R\geq0$, as in \Cref{prop:FTtrick}, is a closed interval $[\tau_{\mathrm{min}},\tau_{\mathrm{max}}]$, which degenerates to a single point  (i.e., $\tau_{\mathrm{min}}=\tau_{\mathrm{max}}$) if and only if $R$ has $\sec_R\geq0$ but does not have $\sec_R>0$, see \cite[Prop.~3.1]{bkm-siaga}.

\subsection{Clifford Algebra and Spinors}\label{subsec:CAandS}
If $\dim_\R V=2n$, then the complex Clifford algebra \(\cl(V)\) associated to \(V\) has a unique irreducible complex representation \(S(V)\), which is a complex vector space of dimension $2^{n}$ of so-called \emph{(Dirac) spinors}. We endow $S(V)$ with a Hermitian inner product for which the action of unit vectors in \(\cl(V)\) is isometric. The complex volume element \(\omega_\C\in\cl(V)\), which is given by \(\omega_{\C}=(\sqrt{-1})^n e_1e_2\dots e_{2n}\) for any  orthonormal basis \(\{e_1,\dots,e_{2n}\}\) of \(V\), satisfies \(\omega_\C^2=1\). Thus, it induces splittings of $\cl(V)$ and $S(V)$ as orthogonal direct sums of eigenspaces of \(\omega_\C\) with eigenvalue \(\pm1\), denoted
\begin{equation}\label{eq:clpm-spm}
    \cl(V)=\cl^+(V)\oplus \cl^-(V) \quad\text{ and }\quad S(V)=S^+(V)\oplus S^-(V),
\end{equation}
respectively. We write $S:=S(V)$ and $S^\pm:=S^\pm(V)$ to simplify notation, when the inner product space $V$ in question is clear from the context.

The homomorphism \(\cl(V)\to \End(S)\) defining the representation $S$ is an isomorphism; and, just as in the real case discussed above, the Hermitian inner product $\langle,\rangle$ on \(S\) 
allows us to identify \(\End(S) =S\otimes S\) via \((\phi\otimes \psi)(\cdot)=\langle \phi,\cdot \rangle \, \psi\), for all \(\phi,\psi\in S\). The composition of these isomorphisms is an isomorphism \(\cl(V)\cong S\otimes S\) which is \(\cl(V)\)-equivariant with respect to left multiplication on \(\cl(V)\) and multiplication on the second factor of \(S\otimes S\). Thus, in light of \eqref{eq:clpm-spm}, it restricts to isomorphisms 
\begin{equation}\label{eq:cl+cl-}
    \cl^+(V)\cong S\otimes S^+\quad \text{ and }\quad\cl^-(V)\cong S\otimes S^-.
\end{equation}

As a vector space, the Clifford algebra \(\cl(V)\) is isomorphic to the complexified exterior algebra \(\wedge^*_\C V=\bigoplus_p \wedge^p V\otimes_\R \C\), via the linear map given on orthonormal basis elements by \(e_{i_1}\dots e_{i_p}\mapsto e_{i_1}\wedge\dots\wedge e_{i_p}\). This is a \emph{$\Z_2$-graded} isomorphism: the natural splitting \(\cl(V)=\cl^0(V)\oplus \cl^1(V)\) arising from the $\Z_2$-grading of $\cl(V)$ is mapped to the splitting $\wedge^*_\C V=\wedge^\even_\C V\oplus \wedge^\odd_\C V$ into exterior powers of even and odd degrees.
The action of \(\cl^0(V)\cong \wedge^\even_\C V\) on $S$ preserves \(S^\pm\) while that of \(\cl^1(V)\cong \wedge^\odd_\C V\) interchanges these subspaces.
Since the $\Z_2$-graded isomorphism \(\wedge^*_\C V\to \cl(V)\) conjugates
the duality isomorphism \(\wedge^pV\to \wedge^{2n-p} V\) given by $(\sqrt{-1})^{p(p-1)+n}\,*$, where \(*\) is the Hodge star operator, and left multiplication by \(\omega_\C\) in \(\cl(V)\),
it follows that
\begin{align*}
\wedge_\C^\even V&=\wedge_\C^{+,\even} V \oplus \wedge_\C^{-,\even} V, &&  \wedge_\C^\odd V=\wedge_\C^{+,\odd} V\oplus \wedge_\C^{-,\odd} V, \\ 
\cl^0(V)&=\cl^{+,0}(V)  \oplus \cl^{-,0}(V), && \cl^1(V)=\cl^{+,1}(V)\oplus \cl^{-,1}(V),
\end{align*}
where vertically aligned spaces are isomorphic, i.e.,
\begin{equation}\label{decompositions}
\begin{aligned}
\wedge_\C^{\pm,\even}V &\cong \cl^{\pm,0}(V)\cong S^\pm \otimes S^\pm,\\
\wedge_\C^{\pm,\odd}V &\cong \cl^{\pm ,1}(V)\cong S^\mp \otimes S^\pm ,
\end{aligned}
\end{equation}
and this notation is compatible with \eqref{eq:cl+cl-}, i.e., $\cl^\pm(V)=\cl^{\pm,0}(V)\oplus\cl^{\pm,1}(V)$.

\subsection{Spin group}
The {nontrivial} double cover of the special orthogonal group $\SO(V)$ is the \emph{spin group} $\Spin(V)$, and it can be realized as $\Spin(V)\subset \cl^0(V)$, see \cite[Chap.~I]{lm-book} for details. Thus, its Lie algebra is isomorphic to the Lie algebra $\mathfrak{so}(V)$ of $\SO(V)$, which is identified with $\wedge^2 V$ as usual, i.e., $(x\wedge y)(\cdot)=\langle x,\cdot \rangle y-\langle y,\cdot\rangle x$ corresponds to an infinitesimal rotation in the $2$-plane of $V$ spanned by $x$ and $y$.

The inverse of the Lie algebra isomorphism $\Xi_0$ induced by the double cover \(\cl^0(V)\supset\Spin(V)\to \SO(V)\) is the map $\Xi^{-1}_0 \colon \wedge^2V\cong \mathfrak{so}(V)\to\mathfrak{spin}(V)\subset \cl^0(V)$ given on $x\wedge y$, where $x,y\in V$ are orthogonal vectors, by
\begin{equation}\label{coverderiv}
\Xi^{-1}_0(x\wedge y)=\tfrac12 xy,
\end{equation}
cf.~\cite[Prop.~I.6.2]{lm-book}. 
Note that \(\Xi_0^{-1}\) differs by a factor of \(\tfrac12\) from the restriction to $\wedge^2 V$ of the isomorphism \(\wedge^*_\C V\to \cl(V)\) mentioned above, for which $x\wedge y\mapsto xy$.

\subsection{Spinor bundles and Dirac operators}
Let $(M^{2n},\g)$ be an oriented Riemannian manifold of dimension $2n$, and denote by \(\nabla^\text{LC}\) its Levi-Civita connection on \(TM\). Applying the above constructions pointwise, i.e., to each tangent space $V=T_pM$, $p\in M$, we obtain the \emph{spinor bundle} $S(TM)$ over $M$ and analogous isomorphisms and splittings compatible with the natural connections induced by \(\nabla^\text{LC}\) on each of these bundles. In particular, we note that \(\omega_\C\) is parallel.

Once again, to simplify notation, we write $S:=S(TM)$ and $S^\pm:=S^\pm(TM)$ if the Riemannian manifold $(M,\g)$ is clear from the context, as well as $S_\g$ and~$S^\pm_\g$, respectively, to indicate the Riemannian metric $\g$ being used, when necessary. The connection on \(S_\g\) induced by \(\nabla^\text{LC}\) is obtained applying the map \eqref{coverderiv} to the connection forms.  Namely, given a local orthonormal frame \(\{e_1,\dots,e_{2n}\}\) of $TM$ we can choose a local frame for \(S\) such that the connection \(\nabla^S\) on \(S\) is given by
\begin{equation*}
\nabla^S_{v}=\dd+\sum_{i<j}\g\big(\nabla^\text{LC}_{v}e_i,e_j\big)\frac{e_ie_j}{2}. 
\end{equation*}
The curvature tensor \(R^S\colon TM\times TM\to\wedge^2 S\subset \End(S)\) of \(\nabla^S\) is given by
\begin{equation}\label{spinorcurv}
R^S_{x,y}=\Xi_0^{-1}\circ R_{x,y}=\sum_{i<j} \g\big( R_{x,y}(e_i),e_j \big)\frac{e_i e_j}{2},
\end{equation}
where \(R\colon TM\times TM\to\wedge^2TM\subset \End(TM)\) is the curvature tensor of \(\nabla^\text{LC}\). 

The \emph{Dirac operator} is the first-order differential operator on sections of $S$ given~by
\begin{equation*}
D(\phi)=\sum_{i=1}^{2n}e_i\nabla^S_{e_i}\phi.    
\end{equation*}
More generally, if \(E\) is a complex vector bundle over \(M\), with a connection \(\nabla^E\),
we can consider the \emph{spinor bundle twisted by} $E$, which is the bundle \(S\otimes E\) endowed with the tensor product connection \(\nabla\) and Clifford multiplication on the \(S\) factor, which makes it a Clifford bundle with a natural \emph{twisted Dirac operator} \(D_E\). Similarly to the above, \(D_E\) acts on a decomposable local section \(\phi\otimes \varepsilon\) of \(S\otimes E\) by 
\begin{equation}\label{eq:twisted-Dirac}
D_E(\phi\otimes \varepsilon)=\sum_{i=1}^{2n}(e_i\nabla^S_{e_i}\phi)\otimes \varepsilon+(e_i \phi)\otimes \nabla^E_{e_i}\varepsilon.    
\end{equation}
The \emph{Bochner--Lichnerowicz--Weitzenb\"ock formula}, cf.~\cite[Thm.~II.8.17]{lm-book}, relates $D_E^2$ and the connection Laplacian $\nabla^*\nabla$ acting on sections of $S\otimes E$ as follows
\begin{equation}\label{twistlich}
D^2_E=\nabla^*\nabla+\tfrac{1}{4}\scal(\g)+\sum_{i<j}e_ie_j\otimes R^E_{e_i,e_j},
\end{equation}
where \(R^E\colon TM \times TM\to \wedge^2 E\subset \End(E)\) is the curvature tensor of \(\nabla^E\), and $\scal(\g)$ is the scalar curvature of $(M,\g)$. For example, if $E$ is the trivial bundle, one recovers the well-known formula $D^2=\nabla^*\nabla+\tfrac14\scal(\g)$ for sections of $S$; while if $E=S$, the twisted Dirac operator $D_S$ on $S\otimes S$ is conjugate to \(\dd+\dd^*\) acting on \(\wedge_\C^* TM^*\cong \cl(TM) \cong S\otimes S\), via the isomorphisms above.
 
\section{Pointwise Inequalities}\label{pi}

In this section, we analyze algebraic properties and provide estimates for two types of curvature terms: the last term $\mathcal R$ in the Bochner--Lichnerowicz--Weitzenb\"ock formula \eqref{twistlich} if $E=f^*(S(TM))$ is the pullback by $f\colon N\to M$ of the spinor bundle of $M$, and (a modification of) the curvature term $\mathcal T$ in the Weitzenb\"ock formula for the Hodge Laplacian on differential forms on $M$. This is done pointwise, so we work with oriented real inner product spaces \(V\) and \(W\) of the same dimension, a linear map $l\colon W\to V$ which encodes $\dd f$, and the induced map $L=\wedge^2 l$ as in \eqref{eq:L}.

\begin{definition}\label{def:RT}
For every \(R\in\Sym^2(\wedge^2 V)\) and $L\in\Hom(\wedge^2 W,\wedge^2 V)$, 
we define two elements in the space of endomorphisms $\End\!\big(S(W)\otimes S(V)\big)$ by means of
\begin{align*}
\mathcal{R}(R,L)&:=-2\sum_{i}\beta_i\otimes R(L(\beta_i)),\\
\mathcal{T}(R,L)&:=-\sum_i\big(L^*(\alpha_i)\otimes1+1\otimes \alpha_i\big)\circ\big(L^*(R(\alpha_i))\otimes 1+1\otimes R(\alpha_i)\big),
\end{align*}
where \(\{\alpha_i\}\) and \(\{\beta_i\}\) are orthonormal bases of \(\wedge^2 V\) and \(\wedge^2 W\), respectively, and
\begin{equation}\label{eq:inclusions}
\wedge^2 V \subset \End\!\big(S(V)\big) \quad \text{ and } \quad \wedge^2 W \subset \End\!\big(S(W)\big)   
\end{equation}
via the respective actions of $\wedge^2 V$ and $\wedge^2 W$ on $S(V)$ and $S(W)$, determined by the map \(\Xi_0^{-1}\) in \eqref{coverderiv}. Moreover, we canonically identify $\End\!\big(S(W)\big) \otimes \End\!\big(S(V)\big)$ and $\End\!\big(S(W)\otimes S(V)\big)$, and $\circ$ in the definition of $\mathcal T(R,L)$ is composition in the latter.
\end{definition}

The endomorphisms $\mathcal{R}(R,L)$ and $\mathcal{T}(R,L)$ of $S(W)\otimes S(V)$ do not depend on the choices of \(\{\alpha_i\}\) and \(\{\beta_i\}\). Indeed, identifying $\wedge^2 W\otimes \wedge^2 V = \Hom(\wedge^2 W,\wedge^2 V)$, and considering $R\in\Sym^2(\wedge^2 V)\subset\End(\wedge^2 V)=\wedge^2 V\otimes\wedge^2 V$, we have that
\begin{equation*}
 \mathcal R(R,L) = -2\, R\circ L\quad \text{ and }\quad \mathcal T(R,L)=-c((T_L\otimes T_L)(R)),   
\end{equation*}
 where $T_L\colon \wedge^2 V\to\End(S(W)) \otimes \End(S(V))$ is given by $T_L(\alpha)=(L^*(\alpha)\otimes 1+1\otimes \alpha)$, and $c$ is the composition $c(A\otimes B)=A\circ B$ for all $A,B\in \End\!\big(S(W)\otimes S(V)\big)$. Clearly, the maps $R\mapsto \mathcal{R}(R,L)$, $L\mapsto \mathcal R(R,L)$, and $R\mapsto \mathcal{T}(R,L)$ are linear.

As elements of $\cl(W)\otimes\cl(V)\cong \End(S(W))\otimes \End(S(V))$, both $\mathcal R(R,L)$ and $\mathcal T(R,L)$ belong to $\cl^0(W)\otimes\cl^0(V)$, and hence, as endomorphisms, they restrict to endomorphisms of $S^+(W)\otimes S^+(V)$ and of $S^-(W)\otimes S^-(V)$.

\begin{lemma}\label{rt}
For all algebraic curvature operators \(R\in\Sym_b^2(\wedge^2V)\), we have
\begin{equation*}
\mathcal{R}(R,L)=\mathcal{T}(R,L)-\tfrac14\operatorname{tr}(L^*\circ R\circ L)-\tfrac18\scal_{R}.
\end{equation*}
\end{lemma}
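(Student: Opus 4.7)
My plan is to expand the composition defining $\mathcal T(R,L)$ into four pieces and identify each with a term on the right-hand side. Distributing
\begin{equation*}
\mathcal T(R,L)=-\sum_i \big(L^*(\alpha_i)\otimes 1 + 1\otimes \alpha_i\big)\circ\big(L^*(R(\alpha_i))\otimes 1 + 1\otimes R(\alpha_i)\big)
\end{equation*}
yields $\mathcal T(R,L)=T_1+T_2+T_3+T_4$ with
\begin{align*}
T_1 &= -\sum_i L^*(\alpha_i)\,L^*(R(\alpha_i))\otimes 1, & T_2 &= -\sum_i L^*(\alpha_i)\otimes R(\alpha_i),\\
T_3 &= -\sum_i L^*(R(\alpha_i))\otimes \alpha_i, & T_4 &= -\sum_i 1\otimes \alpha_i\,R(\alpha_i),
\end{align*}
where the products in $T_1$ and $T_4$ are Clifford products in $\cl(W)$ and $\cl(V)$, respectively, via the inclusions in \eqref{eq:inclusions}. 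The two cross terms coincide: substituting $R(\alpha_i)=\sum_j R_{ij}\alpha_j$ with $R_{ij}=R_{ji}$ and relabeling gives $T_3=T_2$; then expanding $L^*(\alpha_i)=\sum_k \langle\alpha_i,L(\beta_k)\rangle\,\beta_k$ in an orthonormal basis $\{\beta_k\}$ of $\wedge^2W$ and using $\sum_i \langle\alpha_i,x\rangle R(\alpha_i)=R(x)$ yields $T_2=-\sum_k \beta_k\otimes R(L(\beta_k))$, so $T_2+T_3=2T_2=\mathcal R(R,L)$ by definition.

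The key tool for the pure terms $T_1$ and $T_4$ is the Clifford anticommutator identity for bivectors $\alpha,\beta\in \wedge^2 V$, verifiable on pairs of basis elements $e_a\wedge e_b,\,e_c\wedge e_d$: writing $\iota:=\Xi_0^{-1}$,
\begin{equation*}
\{\iota(\alpha),\iota(\beta)\} = -\tfrac12\langle\alpha,\beta\rangle + q(\alpha,\beta),
\end{equation*}
where $q(\alpha,\beta)$ lies in the image of $\wedge^4_\C V$ under the isomorphism $\wedge^*_\C V\cong \cl(V)$ of \Cref{subsec:CAandS}; in particular, the degree-two part of the anticommutator vanishes. For any symmetric $S\in\Sym^2(\wedge^2 V)$, symmetrizing via $S_{ij}=S_{ji}$ gives
\begin{equation*}
\sum_i \iota(\alpha_i)\iota(S(\alpha_i)) = \tfrac12\sum_{i,j}S_{ij}\{\iota(\alpha_i),\iota(\alpha_j)\} = -\tfrac14\operatorname{tr}(S) + \tfrac12\sum_{i,j}S_{ij}\,q(\alpha_i,\alpha_j).
\end{equation*}
A direct combinatorial computation shows the residual quartic piece decomposes, over each $4$-element subset $\{i<j<k<l\}$, into multiples of $S_{ij,kl}-S_{ik,jl}+S_{il,jk}$, which vanish whenever $S$ satisfies the first Bianchi identity. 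Applied to $S=R\in\Sym^2_b(\wedge^2 V)$, this yields $T_4=\tfrac18\scal_R\cdot(1\otimes 1)$.

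For $T_1$, the same Clifford lemma will be applied in $W$ to $\tilde R := L^*\circ R\circ L\in \Sym^2(\wedge^2W)$, after observing that $\sum_i L^*(\alpha_i)\otimes L^*(R(\alpha_i))$ corresponds to $\tilde R$ under the identification $\wedge^2 W\otimes \wedge^2 W\cong \End(\wedge^2 W)$. The crux---the step most prone to error---is verifying that $\tilde R$ itself satisfies the first Bianchi identity on $W$; this follows from
\begin{equation*}
\langle \tilde R_{x,y}(z), w\rangle = \langle R_{l(x),l(y)}(l(z)), l(w)\rangle,
\end{equation*}
a direct consequence of \eqref{eq:curvop-sign} and adjointness, which reduces cyclic summation in $W$ to cyclic summation in $V$ and so to first Bianchi for $R$. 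Thus $\tilde R\in\Sym^2_b(\wedge^2 W)$, its quartic piece vanishes, and $T_1=\tfrac14\operatorname{tr}(L^*\circ R\circ L)\cdot(1\otimes 1)$. Summing, $\mathcal T(R,L)=\tfrac14\operatorname{tr}(L^*RL)+\mathcal R(R,L)+\tfrac18\scal_R$, which rearranges to the desired identity.
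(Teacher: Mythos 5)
Your proof is correct and takes essentially the same route as the paper's: both distribute the composition defining $\mathcal T(R,L)$, identify the cross terms with $\mathcal R(R,L)$, and reduce the two pure terms to traces via the symmetries of Clifford multiplication and the first Bianchi identity (for $R$ and, crucially, for $L^*\circ R\circ L$). The only notable difference is that the paper uses singular value decomposition to choose adapted bases with $L(\beta_i)=\lambda_i\alpha_i$ and cites \cite[Thm.~II.8.8]{lm-book} for the identity $\sum_i\alpha_i\circ R(\alpha_i)=-\tfrac14\operatorname{tr}R$, whereas you avoid the SVD and work out the anticommutator combinatorics explicitly, which makes the argument more self-contained but is not a different method.
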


\begin{proof}
By \Cref{lemma:diag}, we may choose orthonormal bases \(\{\alpha_i\}\)  of $\wedge^2V$ and \(\{\beta_i\}\) of $\wedge^2 W$ such that \(L(\beta_i)=\lambda_i\alpha_i\) for some \(\lambda_i\geq0\). Note that $L^*(\alpha_i)=\lambda_i\beta_i$. Since \(R\) is symmetric, we may write $R(\alpha_i)=\sum_j R_{ij}\alpha_j$ with $R_{ij}=R_{ji}$, and hence
\begin{align*}
    \mathcal{R}(R,L)&=-\sum_i \beta_i\otimes R(\lambda_i\alpha_i)-\sum_i \beta_i\otimes \lambda_i \sum_j R_{ij}\alpha_j \\
    &=-\sum_i \lambda_i\beta_i\otimes R(\alpha_i)-\sum_j \sum_i  R_{ji} \lambda_i \beta_i\otimes \alpha_j \\
    &=-\sum_i L^*(\alpha_i)\otimes R(\alpha_i)-\sum_j L^*(R(\alpha_j))\otimes \alpha_j\\
    &{=\mathcal{T}(R,L)+\sum_i \big(L^*(\alpha_i)\circ L^*(R(\alpha_i))\big)\otimes 1+\sum_i 1\otimes \big(\alpha_i\circ R(\alpha_i)\big)}.
\end{align*}
A routine argument using the symmetries of Clifford multiplication and of the curvature operator $R$, including the Bianchi identity, see \cite[Thm.~II.8.8]{lm-book}, implies
\begin{equation*}
    \sum_i \alpha_i\circ R(\alpha_i)=-\tfrac14\operatorname{tr}(R)=-\tfrac18\scal_{R}.
\end{equation*}	
Since \(L^*\circ R\circ L\in\Sym^2(\wedge^2 W)\) has the same symmetries as \(R\), the above also implies
\begin{equation*}
    \sum_i L^*(\alpha_i)\circ L^*(R(\alpha_i))=\sum_i\beta_i\circ(L^*\circ R\circ L(\beta_i))=-\tfrac14\operatorname{tr}(L^*\circ R\circ L).\qedhere
\end{equation*}
\end{proof}

We now estimate the terms in the right-hand side of the identity in \Cref{rt}. The following two lemmas were observed in~\cite[Sec~1.1]{goette-semmelmann2}, but, for completeness, we supply their proofs below using our notations.

\begin{lemma}\label{fscal}
	If \(R\in\Sym^2_b(\wedge^2V)\) is an algebraic curvature operator with $\sec_R\geq0$ and $l\colon W\to V$ is a linear map such that $L=\wedge^2 l$ is nonincreasing, then 
    \begin{equation*}
    \operatorname{tr}(L^*\circ R\circ L)\leq\tfrac12\scal_{R}.
    \end{equation*}
	If, in addition, \(\tfrac12\scal_{R} \succ \Ric_{R}\succ 0\), then equality above implies \(l\) is an isometry.  
\end{lemma}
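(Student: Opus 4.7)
The plan is to put $l$ into diagonal form and then read both sides of the inequality off the diagonal entries. Applying \Cref{lemma:diag} to $l\colon W\to V$ yields orthonormal bases $\{w_i\}$ of $W$ and $\{v_i\}$ of $V$ with $l(w_i)=\lambda_i v_i$ for some $\lambda_i\geq 0$. Then $\{w_i\wedge w_j\}_{i<j}$ and $\{v_i\wedge v_j\}_{i<j}$ are orthonormal bases of $\wedge^2W$ and $\wedge^2V$, respectively, and $L(w_i\wedge w_j)=\lambda_i\lambda_j\,v_i\wedge v_j$. Since $L$ is diagonal in these bases with nonnegative entries, the nonincreasing hypothesis $\|L(\cdot)\|\leq\|\cdot\|$ is equivalent to $\lambda_i\lambda_j\leq 1$ for all $i<j$.

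Next I would compute the two traces in this basis. In $\wedge^2 V$, one has $\tfrac12\scal_R=\operatorname{tr}(R)=\sum_{i<j}\sec_R(v_i\wedge v_j)$, while
\[
\operatorname{tr}(L^*\circ R\circ L)=\sum_{i<j}\langle R(\lambda_i\lambda_j\,v_i\wedge v_j),\lambda_i\lambda_j\,v_i\wedge v_j\rangle=\sum_{i<j}\lambda_i^2\lambda_j^2\,\sec_R(v_i\wedge v_j).
\]
Since $\sec_R\geq 0$ and $\lambda_i^2\lambda_j^2\leq 1$, the inequality follows termwise. Equality is equivalent to $(1-\lambda_i^2\lambda_j^2)\sec_R(v_i\wedge v_j)=0$ for every $i<j$.

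For the rigidity conclusion, I expect the main obstacle to be that the nonincreasing hypothesis constrains only the \emph{products} $\lambda_i\lambda_j$, so individual singular values may in principle exceed $1$ (compensated by others being small). My plan is to first rule out any $\lambda_k>1$: such a $\lambda_k$ must be maximal, forcing $\lambda_j\leq 1/\lambda_k<1$ and hence $\lambda_i\lambda_j<1$ for all $i,j\neq k$; by the equality condition, $\sec_R(v_i\wedge v_j)=0$ for all such pairs, which makes $\tfrac{\scal_R}{2}-\Ric_R(v_k,v_k)=\sum_{i<j,\;i,j\neq k}\sec_R(v_i\wedge v_j)=0$, contradicting $\tfrac{\scal_R}{2}\g\succ\Ric_R$. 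Once every $\lambda_i\leq 1$, the remaining case $\lambda_k<1$ yields $\lambda_k\lambda_j<1$ for all $j\neq k$, hence $\sec_R(v_k\wedge v_j)=0$ for all $j\neq k$ and thus $\Ric_R(v_k,v_k)=0$, contradicting $\Ric_R\succ 0$. Both hypotheses are thus used in an essentially symmetric way—$\tfrac{\scal_R}{2}\g\succ\Ric_R$ eliminates singular values above $1$ and $\Ric_R\succ 0$ eliminates those below—and together they force $\lambda_i=1$ for all $i$, i.e., $l$ is an isometry.
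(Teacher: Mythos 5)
Your proposal is correct and follows essentially the same approach as the paper: singular-value diagonalization via \Cref{lemma:diag}, the termwise trace computation yielding $\operatorname{tr}(L^*\circ R\circ L)=\sum_{i<j}\lambda_i^2\lambda_j^2\sec_R(v_i\wedge v_j)$, and the resulting equality condition $(1-\lambda_i^2\lambda_j^2)\sec_R(v_i\wedge v_j)=0$. Your rigidity step is the contrapositive phrasing of the paper's direct argument—where the paper, for each fixed $a$, produces a pair $i,j\neq a$ with $\sec_R(v_i\wedge v_j)>0$ and deduces $\lambda_a\leq 1$, you instead assume some $\lambda_k>1$ and derive $\tfrac{\scal_R}{2}-\Ric_R(v_k,v_k)=0$; both use $\tfrac{\scal_R}{2}\g\succ\Ric_R$ and $\Ric_R\succ 0$ in the same symmetric way to bound the singular values from above and below, respectively.
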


\begin{proof}
By \Cref{lemma:diag}, we may choose orthonormal bases \(\{v_i\}\) of \(V\) and \(\{w_i\}\) of \(W\) such that \(l(w_i)=\lambda_i v_i\) for some \(\lambda_i\geq0\). 
Then \(\{w_i\wedge w_i\}_{i<j}\) is an orthonormal basis of $\wedge^2 W$ and $(R\circ L)(w_i\wedge w_j)=\lambda_i\lambda_j \, R(w_i\wedge w_j)$ for all $i<j$, so
\begin{equation*}
\operatorname{tr}(L^*\circ R\circ L)=\sum_{i<j}\lambda_i^2\lambda_j^2\, \sec_{R}(v_i\wedge v_j).
\end{equation*}
Since \(L\) is nonincreasing, \(\lambda_i\lambda_j\leq1\) for all \(i < j\), which proves the desired inequality.  

If $\operatorname{tr}(L^*\circ R\circ L)=\tfrac12\scal_{R}$, then
\begin{equation}\label{scaleq}
\sum_{i<j}(1-\lambda_i^2\lambda_j^2)\,\sec_R(v_i\wedge v_j)=0.
\end{equation}
If, furthermore, \(\tfrac12\scal_{R} \succ \Ric_{R}\), then for each fixed \(a\), we have \(\sum_{b}\sec_R(v_a\wedge v_b)<\sum_{i<j}\sec_R(v_i\wedge v_j)\), or equivalently,
\begin{equation*}
    0<\sum_{\substack{i<j\\ i,j\neq a}}\sec_R(v_i\wedge v_j).
\end{equation*}
Thus there exist \(i,j\neq a\) so that \(\sec_R(v_i\wedge v_j)>0\). (Note that \(\tfrac12\scal_{R} \succ \Ric_{R}\) implies $\dim V\geq3$.) As $\sec_R\geq0$, it follows that \eqref{scaleq} implies \(\lambda_i\lambda_j=1\). But \(\lambda_i\lambda_a\leq1\) and \(\lambda_j\lambda_a\leq1\), so we conclude that \(\lambda_a\leq1\) for all \(a\), i.e., \(l\) is nonincreasing. If, moreover, \(\Ric_{R}\succ 0,\) then, for each \(a\), there exists \(b\neq a\) such that \(\sec_R(v_a\wedge v_b)>0\). Again it follows that \(\lambda_a\lambda_b=1\), and we conclude that \(\lambda_a=\lambda_b=1,\) so \(l\) is an isometry.
\end{proof}

\begin{lemma}\label{nneg}
If $R\in\Sym^2(\wedge^2 V)$ is such that \(R\succeq0\), then \(\mathcal{T}(R,L)\succeq0\).
\end{lemma}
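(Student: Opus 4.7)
The plan is to diagonalize $R$ and recognize $\mathcal T(R,L)$ as a sum of squares of skew-Hermitian operators. Since $R\in\Sym^2(\wedge^2 V)$ is positive-semidefinite, there is an orthonormal basis $\{\alpha_i\}$ of $\wedge^2 V$ with $R(\alpha_i)=\mu_i\alpha_i$ and $\mu_i\geq 0$. As noted after \Cref{def:RT}, $\mathcal T(R,L)$ is independent of the choice of orthonormal basis, so we may use $\{\alpha_i\}$ to compute
\begin{equation*}
\mathcal T(R,L)=-\sum_i\mu_i\,\bigl(L^*(\alpha_i)\otimes 1+1\otimes\alpha_i\bigr)^{2},
\end{equation*}
where the squaring is composition in $\End\!\big(S(W)\otimes S(V)\big)$.

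The key step is to show that, for every $\alpha\in\wedge^2 V$, the operator $A:=L^*(\alpha)\otimes 1+1\otimes\alpha$ acting on $S(W)\otimes S(V)$ is skew-Hermitian. Via the identification $\Xi_0^{-1}$ of \eqref{coverderiv}, elements of $\wedge^2 V$ lie in $\mathfrak{spin}(V)\subset\cl^0(V)$ and act on $S(V)$ through the differential of the spin representation. Since $\Spin(V)$ acts on $S(V)$ by isometries of the Hermitian inner product $\langle\cdot,\cdot\rangle$ fixed in \Cref{subsec:CAandS}, the induced Lie algebra action is by skew-Hermitian endomorphisms; that is, $\alpha^*=-\alpha$ in $\End(S(V))$, and likewise $L^*(\alpha)^*=-L^*(\alpha)$ in $\End(S(W))$. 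Using the natural Hermitian structure on $S(W)\otimes S(V)$ makes both summands of $A$ skew-Hermitian, hence so is $A$.

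Having established this, it follows that
\begin{equation*}
-A^{2}=A^{*}A\succeq 0
\end{equation*}
as a Hermitian endomorphism of $S(W)\otimes S(V)$. Substituting back yields
\begin{equation*}
\mathcal T(R,L)=\sum_i\mu_i\bigl(L^*(\alpha_i)\otimes 1+1\otimes\alpha_i\bigr)^{*}\bigl(L^*(\alpha_i)\otimes 1+1\otimes\alpha_i\bigr)\succeq 0,
\end{equation*}
which is the desired conclusion. The only nontrivial ingredient is the skew-Hermiticity of the $\wedge^2 V$-action on $S(V)$, which is standard spin-geometric folklore that we expect to be the one subtle bookkeeping point in assembling the proof.
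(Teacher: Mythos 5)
Your proof is correct and follows essentially the same route as the paper's: diagonalize $R$ in an orthonormal basis of $\wedge^2 V$, rewrite $\mathcal T(R,L)$ as $-\sum_i\rho_i\,A_i^2$ with $A_i=L^*(\alpha_i)\otimes 1+1\otimes\alpha_i$, and observe that each $A_i$ is skew-Hermitian (the paper says ``skewsymmetric'' for the Hermitian inner product) since $\alpha_i\in\mathfrak{spin}(V)$ and $L^*(\alpha_i)\in\mathfrak{spin}(W)$, so $-A_i^2=A_i^*A_i\succeq 0$.
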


\begin{proof}
	Choose an orthonormal basis \(\alpha_i\) of $\wedge^2 V$ that diagonalizes $R$, i.e., such that \(R(\alpha_i)=\rho_i\,\alpha_i\). Since $R\succeq0$, we have that \(\rho_i\geq0\). Then, by \Cref{def:RT},
    \begin{equation*}
    \mathcal{T}(R,L)=-\sum_i\rho_i\left(L^*(\alpha_i)\otimes 1+1\otimes \alpha_i\right)^2.    
    \end{equation*}
	Since $\alpha_i\in\mathfrak{spin}(V)$ and $L^*(\alpha_i)\in\mathfrak{spin}(W)$, i.e., these are elements of $\cl^0(V)$ and $\cl^0(W)$ in the images of the corresponding isomorphisms \eqref{coverderiv}, the endomorphisms $L^*(\alpha_i)\otimes 1+1\otimes \alpha_i$ of $S(W)\otimes S(V)$ are skewsymmetric, so the conclusion follows.
\end{proof}

Let us now assume that both $V$ and $W$ are $4$-dimensional.
We denote the Hodge star operators of $V$ and $W$ by \(*^V\in\Sym^2(\wedge^2V)\) and \(*^W\in\Sym^2(\wedge^2W)\), and similarly for \(\omega_\C^V\in\cl(V)\) and \(\omega_\C^W\in\cl(W)\). Recall from \Cref{subsec:CAandS} and \eqref{coverderiv} that \(\Xi_0^{-1}(*(x\wedge y))=\tfrac12\,\omega_\C \,xy\) if $x$ and $y$ are orthogonal.

\begin{lemma}\label{star}
If the linear map \(l\colon W\to V\) is such that $L=\wedge^2l$ is nonincreasing and \(\dim_\R V=\dim_\R W=4\), then the restriction of \(\mathcal{T}(*^V,L)\) to \(S^+(W)\otimes S^+(V)\) is positive-semidefinite.
\end{lemma}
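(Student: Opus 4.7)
The plan is to exploit the self-dual/anti-self-dual decomposition $\wedge^2 V = \wedge^+V\oplus\wedge^- V$ available in dimension $4$, together with the fact that anti-self-dual $2$-forms act as zero on $S^+$ under the Clifford action. First, by the singular value decomposition (\Cref{lemma:diag}), pick orthonormal bases $\{e_i\}$ of $V$ and $\{f_i\}$ of $W$ so that $l(f_i)=\mu_i e_i$ with $\mu_i\geq 0$; the nonincreasing hypothesis on $L=\wedge^2 l$ then becomes $\mu_i\mu_j\leq 1$ for all $i<j$. Choose the standard orthonormal bases $\{\alpha_i\}_{i=1}^{6}$ of $\wedge^+V\oplus\wedge^- V$ built from the combinations $\tfrac{1}{\sqrt 2}(e_i\wedge e_j\pm e_k\wedge e_l)$, with analogous bases $\{\beta_i\}$ for $W$. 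A direct computation gives a twisted block-diagonal form for $L^*$: for each $j\in\{1,2,3\}$,
\[
L^*(\alpha_j)=a_j\beta_j+b_j\beta_{j+3},\qquad L^*(\alpha_{j+3})=b_j\beta_j+a_j\beta_{j+3},
\]
where $a_j\pm b_j$ are two of the pairwise products $\mu_i\mu_k$, and $a_j^2-b_j^2=\mu_1\mu_2\mu_3\mu_4=:\Delta$ is independent of $j$.

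Writing $*^V=\Pi_+-\Pi_-$ as a difference of orthogonal projections onto $\wedge^\pm V$ and using linearity of $\mathcal T(\cdot,L)$, restrict to $S^+(W)\otimes S^+(V)$. Since anti-self-dual $2$-forms vanish on $S^+$ in both factors, the restriction collapses to
\[
\mathcal T(*^V,L)\big|_{S^+\otimes S^+}=-\sum_{j=1}^{3}\bigl(a_j\beta_j\otimes 1+1\otimes\alpha_j\bigr)^{2}+\sum_{j=1}^{3}b_j^{2}\,(\beta_j\otimes 1)^{2}.
\]
On $S^+(V)\cong\C^{2}$, the self-dual triple $\{\alpha_1,\alpha_2,\alpha_3\}$ behaves as $\tfrac{\sqrt{-1}}{\sqrt 2}$ times a Pauli basis---they pairwise anticommute with $\alpha_j^{2}=-\tfrac12\,\id$, reflecting the isomorphism $\wedge^+V\cong\mathfrak{su}(S^+(V))$---and similarly for $\{\beta_j\}$ on $S^+(W)$. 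Expanding the squares and using $b_j^{2}-a_j^{2}=-\Delta$ further reduces the expression to
\[
\mathcal T(*^V,L)\big|_{S^+\otimes S^+}=\tfrac{3}{2}(1+\Delta)\,\id-2T,\qquad T:=\sum_{j=1}^{3}a_j\,\beta_j\otimes\alpha_j.
\]

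To bound $T$, identify $\beta_j\otimes\alpha_j$ with $-\tfrac12\,\sigma_j\otimes\sigma_j$ for a matched Pauli triple $\{\sigma_j\}$; direct diagonalization on $\C^{2}\otimes\C^{2}$ shows the eigenvalues of $\sum_j a_j\,\sigma_j\otimes\sigma_j$ are $\pm a_1\pm a_2\pm a_3$ with an odd number of minus signs, so its minimum is $-(a_1+a_2+a_3)$, yielding $T\preceq\tfrac14\bigl(\sum_{i<j}\mu_i\mu_j\bigr)\id$. Thus positive-semidefiniteness of $\mathcal T(*^V,L)|_{S^+\otimes S^+}$ reduces to the scalar bound
\[
\sum_{i<j}\mu_i\mu_j\leq 3+3\,\mu_1\mu_2\mu_3\mu_4,
\]
which in turn follows from the identity
\[
3+3\,\mu_1\mu_2\mu_3\mu_4-\sum_{i<j}\mu_i\mu_j=\sum_{\text{pairings}}(1-\mu_i\mu_j)(1-\mu_k\mu_l),
\]
summed over the three partitions of $\{1,2,3,4\}$ into two pairs, each factor being $\geq 0$ by the nonincreasing hypothesis. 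The main technical hurdle is the algebraic bookkeeping: establishing the block form of $L^*$, verifying the Pauli-type anticommutation relations on $S^+$, and tracking Clifford multiplication signs; once these are in place, the eigenvalue computation for $T$ and the final scalar factorization are routine.
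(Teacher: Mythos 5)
Your proof is correct, and it takes a genuinely different route from the paper's. The paper works in the monomial orthonormal basis $\{v_i\wedge v_j\}_{i<j}$ of $\wedge^2 V$, sets $\mu_{ij}$ by $L^*\!\bigl(*^V(v_i\wedge v_j)\bigr)=\mu_{ij}*^W(w_i\wedge w_j)$, and shows that each of the six summands of $\mathcal T(*^V,L)$ is \emph{separately} positive-semidefinite on $S^+\otimes S^+$: after using $\Xi_0^{-1}(*(x\wedge y))=\tfrac12\omega_\C xy$ to discard the $\omega_\C$ factors, the $(i,j)$ summand factors as $\tfrac14(-\lambda_i\lambda_j+w_iw_j\otimes v_iv_j)(-\mu_{ij}+w_iw_j\otimes v_iv_j)$, and since $(w_iw_j\otimes v_iv_j)^2=1$ the eigenvalues are $(1\mp\lambda_i\lambda_j)(1\mp\mu_{ij})\geq0$. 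That argument is three or four lines; the global sum is PSD because every term is. You instead pass to the self-dual/anti-self-dual basis, use that anti-self-dual forms annihilate $S^+$ in each factor, and collapse the restricted operator to a scalar multiple of the identity minus twice a ``Heisenberg''-type operator $T=\sum_j a_j\,\beta_j\otimes\alpha_j$. Diagonalizing $T$ via the three commuting involutions $2\beta_j\otimes 2\alpha_j$ (whose product, one checks, is $+1$) reduces the lemma to the scalar inequality $3+3\mu_1\mu_2\mu_3\mu_4\geq\sum_{i<j}\mu_i\mu_j$, which you prove via the identity $3+3\Delta-\sum_{i<j}\mu_i\mu_j=\sum_{\text{pairings}}(1-\mu_i\mu_j)(1-\mu_k\mu_l)$. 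Both proofs hinge on the same two ingredients (singular value decomposition, and $\omega_\C=1$ on $S^+$ in dimension four); the paper's is shorter and more modular because each summand is PSD on its own, while yours is heavier on algebra but buys a clean, fully explicit reduction to a scalar inequality with an elegant factorization. One small remark: the exact matching of $\{\alpha_j\}$ and $\{\beta_j\}$ to a common Pauli triple involves sign/orientation conventions, but what you actually use — $c_j:=\beta_j\otimes\alpha_j$ pairwise commute, $c_j^2=\tfrac14$, $c_1c_2c_3=\tfrac18$, and $a_j\geq0$ — holds independently of those choices, so the eigenvalue bound $T\preceq\tfrac14\bigl(\sum_{i<j}\mu_i\mu_j\bigr)\mathrm{id}$ is safe.
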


\begin{proof}
By \Cref{lemma:diag}, we may choose orthonormal bases \(\{v_i\}\) of \(V\) and \(\{w_i\}\) of \(W\) such that \(l(w_i)=\lambda_i v_i\) for some \(\lambda_i\geq0\). The assumption on \(l\) ensures that \(\lambda_i\lambda_j\leq1\) for all \(i\neq j\). We first note that, with these choices,
\begin{equation*}
    L^*\big(*^V(v_i\wedge v_j)\big)=\mu_{ij}*^W (w_i\wedge  w_j),
\end{equation*}
	where \(|\mu_{ij}|=\big\|L^* (*^V (v_i\wedge v_j))\big\|\leq1\), since \(*^V\) is an isometry of $\wedge^2 V$. Symmetries of Clifford multiplication in \(\cl^{0}(W)\otimes \cl^{0}(V)\cong\End\!\big(S^+(W)\otimes S^+(V)\big)\) and the fact that $\Xi_0^{-1}(*(x\wedge y))=\tfrac12\,\omega_\C \,xy$ if $x$ and $y$ are orthonormal imply that:
\begin{align*}
L^*\big(*^V (v_i\wedge v_j)\big)\otimes1+1\otimes*^V (v_i\wedge v_j) &=\tfrac12\left(\mu_{ij}\,\omega^W_\C w_iw_j\otimes 1+1\otimes\omega^V_\C v_i v_j\right)\\
&=\tfrac12\left(\mu_{ij}\,w_iw_j\omega^W_\C\otimes 1+1\otimes v_i v_j\omega^V_\C\right)\\
&=\tfrac12\left(\mu_{ij}\,w_iw_j\otimes 1+1\otimes v_iv_j\right),
\end{align*}
where the last equality holds because the above endomorphisms are restricted to the tensor product $S^+(W)\otimes S^+(V)$ of $+1$-eigenspaces of $\omega_\C^W$ and $\omega_\C^V$.
Therefore,
\begin{align*}
\mathcal{T}(*^V,L) &=-\tfrac14\sum_{i<j}\left(\lambda_i\lambda_j\, w_iw_j\otimes 1+1\otimes v_iv_j\right)\circ\left(\mu_{ij}\, w_iw_j\otimes 1+1\otimes v_iv_j\right)\\
&=\tfrac14\sum_{i<j}\left(-\lambda_i\lambda_j+w_iw_j\otimes v_iv_j\right)\circ\left(-\mu_{ij}+w_iw_j\otimes v_iv_j\right),
\end{align*}
and, since \(\left(w_iw_j\otimes v_iv_j\right)^2=1\), and \(|\lambda_i\lambda_j|\leq1\) as well as \(|\mu_{ij}|\leq1\), we may use a basis of eigenvectors of \(w_iw_j\otimes v_iv_j\) to conclude that the eigenvalues of each of the above summands are \((-\lambda_i\lambda_j+1)(-\mu_{ij}+1)\geq0\) or \((-\lambda_i\lambda_j- 1)(-\mu_{ij}- 1)\geq0.\) 
\end{proof}

\begin{remark}
Under the hypotheses of \Cref{star}, it also follows that the restriction of $\mathcal T(*^V,L)$ to $S^-(W)\otimes S^-(V)$ is \emph{negative}-semidefinite.
\end{remark}

\section{Extremality and rigidity on closed 4-manifolds}\label{4closed}

In this section, we prove \Cref{mainthm1} and Corollary \ref{maincor:CP2CP2} in the Introduction, by showing that a certain twisted Dirac operator has nontrivial kernel and 
using the results of \Cref{pi} to analyze the curvature term in the corresponding Bochner--Lichnerowicz--Weitzenb\"ock formula.

We begin by recalling and generalizing the notions of area-extremality and area-rigidity for scalar curvature discussed in the Introduction to also account for ``topologically modified'' competitors, cf.~Gromov~\cite[Sec.~4]{gromov-dozen} and \cite[Sec.~$5\tfrac49$]{gromov-96}. Henceforth, all manifolds are assumed connected.

\begin{definition}\label{def:extremality-global}
A closed oriented Riemannian manifold $(M,\g_M)$ is \emph{area-extremal with respect to a class $\mathcal C=\{ f\colon (N,\g_N)\to (M,\g_M) \}$} of competitors, consisting of closed oriented Riemannian manifolds $(N,\g_N)$ with $\dim M=\dim N$ and smooth spin maps $f\colon N\to M$ of nonzero degree, if the inequalities
\begin{equation}\label{eq:scal-comp}
\wedge^2\g_N\succeq f^*\wedge^2 \g_M \qquad \text{and}\qquad \scal(\g_N)\geq \scal(\g_M)\circ f	
\end{equation}
imply $\scal(\g_N)=\scal(\g_M)\circ f$. If, in addition, there exists $q\in M$ such that \eqref{eq:scal-comp} implies $\dd f(p)\colon T_pN\to T_{q}M$ is a linear isometry for all competitors $f\colon N\to M$ in $\mathcal C$
and all $p\in f^{-1}(q)$, then $(M,\g_M)$ is called \emph{area-rigid at $q\in M$ with respect to $\mathcal C$}. If $(M,\g_M)$ is area-rigid at all of its points with respect to $\mathcal C$, then it is simply called \emph{area-rigid with respect to $\mathcal C$.}
\end{definition}

Recall that a smooth map \(f\colon N\to M\) is \emph{spin} if it is compatible with second Stiefel--Whitney classes, i.e., $f^*w_2(M)=w_2(N)$, and $\wedge^2\g_N\succeq f^* \wedge^2 \g_M$ means that $f\colon (N,\g_N)\to (M,\g_M)$ is area-nonincreasing, i.e., $\wedge^2 \dd f$ is nonincreasing, namely
\begin{equation*}
	\big\|x\wedge y\big\|_{\g_N}
\geq
	\big\|\dd f(p) x\wedge \dd f(p) y\big\|_{\g_M}
\end{equation*}
for all $x,y\in T_{p}N$ and all $p\in N$. For example, this holds if $f\colon (N,\g_N)\to (M,\g_M)$ is distance-nonincreasing, see \Cref{subsec:linalgebra}.

\begin{remark}\label{rem:f=id}
The notions of area-extremality and area-rigidity for closed manifolds in the Introduction correspond to using the class $\mathcal C^{\id}_0:=\{\id\colon (M,\g_1)\to (M,\g_0)\}$ in \Cref{def:extremality-global}, cf.~\eqref{eq:g1-competitor-global}. Note that competitors given by any diffeomorphisms $f\colon (M,\g_1)\to (M,\g_0)$ reduce to the above case, pulling back $\g_1$ by $f^{-1}$.
\end{remark}

\subsection{Index theory}\label{indextheory}
Let $(M,\g_M)$ and $(N,\g_N)$ be closed oriented Riemannian $4$-manifolds,  and denote by \(S(TM)\) and \(S(TN)\) their (locally defined) spinor bundles. 
Given a spin map $f\colon N\to M$, the twisted spinor bundle \(S(TN)\otimes f^*S(TM)\) is globally defined; namely, it is the spinor bundle of the spin bundle \(TN\oplus f^*TM\).

Let \(E=f^*S^+(TM)\), and consider the twisted Dirac operator
\begin{equation}\label{eq:DE-global}
D_E\colon \Gamma(S(TN)\otimes E)\longrightarrow\Gamma(S(TN)\otimes E).	
\end{equation}
Recall from \eqref{eq:twisted-Dirac} that, 
if we denote by $\nabla^{S_N}$ and $\nabla^{S_M}$ the connections on $S(TN)$ and $S(TM)$ respectively, and by \(\{e_i\}\) a local $\g_N$-orthonormal frame for \(TN\), then on a decomposable local section \(\phi\otimes f^*\psi\) of \(S(TN)\otimes E\), the operator \(D_E\) acts as
\[D_E(\phi\otimes \psi)=\sum_{i=1}^4 \left(e_i\nabla^{S_N}_{e_i}\phi\right)\otimes f^*\psi+(e_i\phi)\otimes f^*\left(\nabla^{S_M}_{\dd f(e_i)}\psi\right) .\]

We respectively denote by \(\chi(X)\) and \(\sigma(X)\) the Euler characteristic and signature of an oriented $4$-manifold \(X\), and by $\deg(f)$ the degree of $f\colon N\to M$.

\begin{lemma}\label{kernel2}
If $f\colon N\to M$ has $\deg(f)\neq0$ and
\begin{equation}\label{eq:nontrivialKerDE}
2\chi(M)+3\sigma(M)>\frac{\sigma(N)}{\deg(f)},	
\end{equation}
then the restriction of \(D_E\) to \(\Gamma(S^+(TN)\otimes E)\) has nontrivial kernel.
\end{lemma}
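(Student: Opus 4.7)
The plan is to compute the Fredholm index of the restriction
\[
D_E^+ \colon \Gamma(S^+(TN) \otimes E) \longrightarrow \Gamma(S^-(TN) \otimes E),
\]
which is obtained by splitting $D_E$ according to the $\mathbb Z_2$-grading of $S(TN)$ induced by $\omega_\C^N$. Since $\operatorname{ind}(D_E^+) = \dim\ker D_E^+ - \dim\ker D_E^-$, showing that this index is strictly positive (under the assumption $\deg(f)>0$) forces the existence of a nontrivial section in $\ker D_E|_{\Gamma(S^+(TN)\otimes E)}$.

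To compute the index, I would first note that, although neither $M$ nor $N$ is assumed spin, the spin-map condition $f^*w_2(M)=w_2(N)$ ensures that $TN\oplus f^*TM$ carries a canonical spin structure, so the bundle $S(TN)\otimes f^*S^+(TM)$ and its twisted Dirac operator are globally well-defined. The Atiyah--Singer Index Theorem then gives
\[
\operatorname{ind}(D_E^+) = \int_N \hat A(TN)\cdot \operatorname{ch}\!\big(f^*S^+(TM)\big).
\]

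To evaluate this integral, I would use the fact that on any oriented $4$-manifold $X$, working with the formal Chern roots $\pm x_1,\pm x_2$ of $TX\otimes\C$, one has $\hat A(TX) = \prod \tfrac{x_i/2}{\sinh(x_i/2)}$ and $\operatorname{ch}(S^\pm(TX)) = \tfrac12\prod(e^{x_i/2}+e^{-x_i/2}) \pm \tfrac12\prod(e^{x_i/2}-e^{-x_i/2})$. A direct expansion, together with the Hirzebruch signature theorem $\int_X p_1(TX)/3=\sigma(X)$ and the Gauss--Bonnet theorem $\int_X e(TX)=\chi(X)$, yields that, in degrees $\leq 4$,
\[
\hat A(TX) = 1 - \tfrac{p_1(TX)}{24}, \qquad \operatorname{ch}(S^\pm(TX)) = 2 + \tfrac{p_1(TX)}{4} \pm \tfrac{e(TX)}{2}.
\]
Pulling back by $f$, keeping only the degree-$4$ part of the product $\hat A(TN)\cdot f^*\operatorname{ch}(S^+(TM))$, and integrating using $\int_N f^*\omega = \deg(f)\int_M\omega$ for any top-degree $\omega$ on $M$, I would obtain
\[
\operatorname{ind}(D_E^+) = \frac{\deg(f)}{4}\left(2\chi(M)+3\sigma(M) - \frac{\sigma(N)}{\deg(f)}\right),
\]
which is strictly positive precisely when $\eqref{eq:nontrivialKerDE}$ holds (and $\deg(f)>0$; the case $\deg(f)<0$ can be addressed by replacing $E=f^*S^+(TM)$ with $f^*S^-(TM)$, which flips the sign of the $e(TM)$ contribution).

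The main obstacle is the justification of the index formula in the non-spin setting: one must interpret $S(TN)\otimes f^*S^+(TM)$ intrinsically through the spin structure on $TN\oplus f^*TM$ (or equivalently through the $\operatorname{spin}^c$ structures on $M$ and $N$ induced by the Riemannian metrics), and verify that the Atiyah--Singer topological index is given by the characteristic-class integral above. Once this is in place, the remainder of the argument is the bookkeeping computation outlined, and the claimed nontriviality of the kernel follows.
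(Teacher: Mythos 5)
Your proof takes essentially the same approach as the paper's: both evaluate $\text{ind}(D_E^+)$ by the Atiyah--Singer Index Theorem using $\mathrm{ch}(S^+(TM)) = 2 + \tfrac14 p_1(TM) + \tfrac12 e(TM)$ --- you via formal Chern roots, the paper via the splitting principle together with a citation to Shanahan's appendix for the non-spin case, which is precisely the point you flag at the end as the ``main obstacle.'' The one substantive difference is your parenthetical about $\deg(f)<0$. You are right that hypothesis \eqref{eq:nontrivialKerDE} only forces $\text{ind}(D_E^+)>0$ when $\deg(f)>0$ (dividing by $\deg(f)$ flips the inequality otherwise), and it is worth noting that the paper's proof writes down the same expression and asserts ``$>0$'' without addressing the sign of $\deg(f)$ either. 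However, your suggested repair --- replacing $E=f^*S^+(TM)$ by $f^*S^-(TM)$ --- changes the bundle appearing in the lemma's conclusion, so it would prove a different statement; and in any case the resulting index, $-\tfrac14\sigma(N)+\deg(f)\bigl(\tfrac34\sigma(M)-\tfrac12\chi(M)\bigr)$, is not implied to be positive by \eqref{eq:nontrivialKerDE} when $\deg(f)<0$ (take, e.g., $\chi(M)=0$, $\sigma(M),\sigma(N)>0$). So the core computation coincides with the paper's, but the $\deg(f)<0$ aside as written does not rescue the lemma and should be removed or reworked.
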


\begin{proof}
By the splitting principle, we may assume that \(TM\otimes\C\cong \lambda_1\oplus\overline\lambda_1\oplus\lambda_2\oplus\overline\lambda_2\), where \(\lambda_1,\lambda_2\) are complex line bundles. Let \(x_1,x_2\) denote the first Chern classes of \(\lambda_1,\lambda_2\) respectively. If \(\lambda_1\) and \(\lambda_2\) are spin, \(S^+(TM)\cong \lambda_1^{\frac12}\otimes \lambda_2^{\frac12}\oplus {\overline \lambda}_1^{\frac12}\otimes{\overline \lambda}_2^{\frac12}\), cf.~\cite[p.~238]{lm-book}, and the Chern character of $S^+(TM)$ is given by
\[\mathrm{ch}(S^+(TM))=2\cosh\left({\tfrac{x_1}2+\tfrac{x_2}2}\right)=2+\tfrac14 p_1(TM)+\tfrac12 e(TM),\]
where \(p_1\) is the first Pontryagin class and \(e\) is the Euler class. By an argument in \cite[App.~A4]{shanahan}, the same formula also holds in the case that \(\lambda_1\) and \(\lambda_2\) are not spin. Then, by the Atiyah--Singer Index Theorem,
\begin{align*}
\text{ind}\big(D_E|_{S^+(TN)\otimes E}\big)&=\left<\hat{A}(TN)\cdot f^*\mathrm{ch}(S^+(TM)),[N]\right>\\&=\left<-\tfrac{1}{12}p_1(TN)+\tfrac14 f^*p_1(TM) +\tfrac12 f^*e(TM),[N]\right>\\&=-\tfrac14\sigma(N)+\text{deg}(f)\left(\tfrac34\sigma(M)+\tfrac12\chi(M)\right)>0,
\end{align*}
and hence \(\ker\big(D_E|_{S^+(TN)\otimes E}\big)\) is nontrivial.
\end{proof}

Note that all hypotheses of \Cref{kernel2} are satisfied if $\deg(f)=1$ and
$M=N$ has vanishing first Betti number $b_1(M)=0$, e.g., if $M$ is simply-connected, since then
$\chi(M)=2+b_+(M)+b_-(M)$ and $\sigma(M)=b_+(M)-b_-(M)$, where $b_\pm(M)$ are the self-dual/anti-self-dual second Betti numbers, so \eqref{eq:nontrivialKerDE} simplifies to $4+4b_+(M)>0$.

\subsection{Extremality and Rigidity}
We now combine the pointwise inequalities from \Cref{pi} with the Bochner--Lichnerowicz--Weitzenb\"ock formula \eqref{twistlich} for the twisted Dirac operator \eqref{eq:DE-global} and \Cref{kernel2} to prove our main result on area-extremality and area-rigidity of closed $4$-manifolds, in the sense of \Cref{def:extremality-global}.

\begin{theorem}\label{thm}
A closed oriented Riemannian $4$-manifold \((M,\g_M)\) with $\sec\geq0$ such  that $R+\tau\,*\succeq0$ for a nonpositive $\tau\colon M\to \R$ is area-extremal with respect to
\begin{equation*} 
\mathcal C_0:=\left\{f\colon (N,\g_N)\to (M,\g_M) \; : \; 2\chi(M)+3\sigma(M)>\frac{\sigma(N)}{\deg(f)}\right\}.
\end{equation*}
If, in addition, $\frac{\scal(\g_M)}{2}\g_M\succ\Ric(\g_M)\succ0$ at $q\in M$, then $(M,\g_M)$ is area-rigid at $q\in M$ with respect to $\mathcal C_0$.
\end{theorem}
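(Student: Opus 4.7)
The plan is to instantiate the spin-geometric scheme from Section 1.3 using the twisted Dirac operator \eqref{eq:DE-global}. Fix a competitor \(f\colon (N,\g_N)\to (M,\g_M)\) in \(\mathcal C_0\) satisfying \eqref{eq:scal-comp}, and take \(E = f^*S^+(TM)\). The topological condition defining \(\mathcal C_0\) is precisely the hypothesis of \Cref{kernel2}, which furnishes a nontrivial section \(\xi \in \Gamma\bigl(S^+(TN)\otimes f^*S^+(TM)\bigr)\) with \(D_E\xi = 0\).

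At each \(p\in N\), apply the algebraic results of Section~\ref{pi} with \(V = T_{f(p)}M\), \(W = T_pN\), \(l = \dd f(p)\), and \(L = \wedge^2 l\). The Bochner--Lichnerowicz--Weitzenb\"ock formula \eqref{twistlich}, combined with \Cref{rt}, rewrites
\begin{align*}
D_E^2 \;=\; \nabla^*\nabla &+ \tfrac14\bigl(\scal(\g_N) - \scal(\g_M)\circ f\bigr) \\
&+ \tfrac14\bigl(\tfrac12\scal(\g_M)\circ f - \operatorname{tr}(L^*\!\circ R\circ L)\bigr) + \mathcal T(R,L).
\end{align*}
The first scalar term is pointwise nonnegative by \eqref{eq:scal-comp}, and the second is pointwise nonnegative by \Cref{fscal}, using \(\sec_R\geq 0\) and the fact that \eqref{eq:scal-comp} makes \(L\) nonincreasing. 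For the endomorphism \(\mathcal T(R,L)\), I exploit linearity of \(\mathcal T(\cdot,L)\) in its first argument to decompose
\[
\mathcal T(R,L) \;=\; \mathcal T(R+\tau *,L) \;-\; \tau\, \mathcal T(*,L).
\]
\Cref{nneg} applied to \(R+\tau *\succeq 0\) gives \(\mathcal T(R+\tau *,L)\succeq 0\), while \Cref{star} gives \(\mathcal T(*,L)\succeq 0\) when restricted to \(S^+(W)\otimes S^+(V)\); since \(\tau\leq 0\), the summand \(-\tau\,\mathcal T(*,L)\) is also \(\succeq 0\) on the same subbundle. Hence \(\mathcal T(R,L)\) restricts to a positive-semidefinite endomorphism of \(S^+(TN)\otimes f^*S^+(TM)\), exactly the subbundle in which \(\xi\) lives.

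Integrating \(0 = \int_N \langle D_E^2\xi,\xi\rangle\) then forces every nonnegative contribution to vanish identically. In particular \(\nabla\xi\equiv 0\), so \(\xi\) is parallel; since \(N\) is connected and \(\xi\not\equiv 0\), it is nowhere-vanishing, and the two scalar terms must vanish pointwise everywhere on \(N\). This yields both \(\scal(\g_N) = \scal(\g_M)\circ f\) (area-extremality) and \(\operatorname{tr}(L^*\!\circ R\circ L) = \tfrac12\scal_R\). For rigidity at a point \(q\in M\) where \(\tfrac{\scal(\g_M)}{2}\,\g_M \succ \Ric(\g_M) \succ 0\), the equality clause of \Cref{fscal} applied at each \(p\in f^{-1}(q)\) concludes that \(\dd f(p)\) is a linear isometry.

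The principal difficulty is aligning the positivity of \(\mathcal T(R,L)\) with the existence of a nontrivial kernel section: \Cref{star} only provides \(\mathcal T(*,L)\succeq 0\) on the ``\(S^+\!\otimes S^+\)'' component, forcing us to restrict to that subbundle, yet the index calculation in \Cref{kernel2} requires precisely the pairing \(E = f^*S^+(TM)\) with positive chirality on \(N\). This novel combination of an Euler-characteristic-type grading on \(N\) with a signature-type grading on \(M\), matched with the sign hypothesis \(\tau\leq 0\) so that Lemmas \ref{nneg} and \ref{star} can be invoked simultaneously via the Finsler--Thorpe splitting of \(R\), is what makes the argument work.
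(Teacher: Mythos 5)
Your proposal is correct and follows exactly the same route as the paper: twisting by $E=f^*S^+(TM)$, invoking \Cref{kernel2} for a kernel section $\xi$ of positive chirality on $N$, combining the Bochner--Lichnerowicz--Weitzenb\"ock formula with \Cref{rt}, estimating the scalar terms via \eqref{eq:scal-comp} and \Cref{fscal}, and establishing $\mathcal T(R,L)\succeq0$ on $S^+(TN)\otimes f^*S^+(TM)$ via the Finsler--Thorpe decomposition together with \Cref{nneg,star}. The only difference is a cosmetic rearrangement of the scalar terms before integration; the integration argument, the conclusion that $\xi$ is parallel and nowhere-vanishing, and the rigidity deduction from the equality case of \Cref{fscal} all match the paper's proof.
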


\begin{proof}
Let $f\colon (N,\g_N)\to (M,\g_M)$ be a competitor in $\mathcal C_0$, and let $E=f^*S^+(TM)$.
The Bochner--Lichnerowicz--Weitzenb\"ock formula for the square \(D_E^2\) of the twisted Dirac operator \eqref{eq:DE-global} is given by \eqref{twistlich}, setting \( \g=\g_N\). From \eqref{eq:curvop-sign} and \eqref{spinorcurv},
 \[R^{{S_M}}_{x,y}=-\Xi_0^{-1} \big( R(x\wedge y)\big),\]
where $R\colon \wedge^2 TM\to\wedge^2 TM$ is the curvature operator of $(M,\g_M)$.
Working pointwise with $W=T_pN$, $V=T_{f(p)}M$, $l\colon W\to V$ given by $\dd f(p)$, and  $L=\wedge^2 l$, we see that \eqref{twistlich} can be written using the endomorphism $\mathcal R(R,L)$ from \Cref{def:RT} as
\begin{equation}\label{d2}
  D_E^2=\nabla^*\nabla+\tfrac{1}{4}\scal(\g_N)+\mathcal{R}(R, L).
\end{equation}
Let \(\xi\in\Gamma(S^+(TN)\otimes E)\) be a nonzero section in the kernel of \(D_E\), which exists by \Cref{kernel2}. Applying \eqref{d2} and integrating, we obtain, using \Cref{rt},
\begin{equation}\label{eq:pfeq1}
\begin{aligned}
0&=\int_{N} \langle \nabla^*\nabla\xi, \xi\rangle+\tfrac14\int_N \scal(\g_N)\|\xi\|^2+\int_N\left<\mathcal{R}(R,L)\xi,\xi\right>\\
&=\int_{N}\|\nabla \xi\|^2+\int_N\left<\mathcal{T}(R,L)\xi,\xi\right> \\
&\quad +\tfrac14\int_N\left( \scal(\g_N)-\tfrac12\scal(\g_M)\circ f-\operatorname{tr}(L^*\circ R \circ L)\circ f\right)\|\xi\|^2.
\end{aligned}
\end{equation}
Linearity of $R\mapsto \mathcal{T}(R,L)$ and \Cref{nneg,star} imply that, on \(S^+(TN)\otimes E\),
\begin{equation}\label{eq:pfeq2}
\mathcal{T}(R,L)=\mathcal{T}(R+\tau\, *,L) -\tau \;\mathcal{T}(*,L)\succeq0,
\end{equation}
since $\tau\leq 0$.
Thus, combining \eqref{eq:scal-comp}, \eqref{eq:pfeq1}, and \eqref{eq:pfeq2} with \Cref{fscal}, which may be applied because $\sec_R\geq0$ and $L$ is nonincreasing by \eqref{eq:scal-comp}, we conclude that 
\begin{align*}
0&\geq\int_{N}\|\nabla \xi\|^2+\tfrac14\int_N\left(\scal(\g_N)-\tfrac12\scal(\g_M)\circ f-\operatorname{tr}(L^*\circ R \circ L)\circ f\right)\|\xi\|^2\\
&\geq\int_{N}\|\nabla \xi\|^2+\tfrac14\int_N\left(\scal(\g_N)-\scal(\g_M)\circ f\right)\|\xi\|^2\geq0.
\end{align*}
Therefore, \(\nabla\xi\) vanishes identically and hence \(\|\xi\|\neq0\) is constant, so it follows that $\scal(\g_N)=\scal(\g_M)\circ f$ everywhere.  Furthermore, \(\operatorname{tr}(L^*\circ R \circ L)=\tfrac12\scal(\g_M)\), so, if $\frac{\scal(\g_M)}{2}\g_M\succ\Ric(\g_M)\succ0$ at $q=f(p)$, then \(l\) is an isometry by \Cref{fscal}.
\end{proof}

Let us briefly discuss some situations in which the hypotheses of \Cref{thm} are satisfied. If \((M,\g_M)\) is simply-connected and its curvature operator $R$ satisfies $R+\tau\,*\succeq0$ with $\tau\leq0$, then, by the proof of \cite[Thm.~D]{bm-iumj}, either \((M,\g_M)\) is isometric to \(\S^2\times \S^2\) and $\tau\equiv0$, hence it is area-extremal with respect to any class of competitors by \cite{goette-semmelmann2}, or else $M$ has negative-definite intersection form, i.e., $b_+(M)=0$, and hence \(\sigma(M)=-b_-(M)=-b_2(M)\). In this latter case, restricting ourselves to \emph{self-map competitors}, i.e., setting \(N=M\), the class $\mathcal C_0$ simplifies to
\begin{equation*}
\mathcal C_0^{\mathrm{self}}:=\left\{f\colon (M,\g_1)\to (M,\g_0) \; : \; 4+\left(\frac{1}{\deg(f)}-1\right)b_2(M)>0 \right\}.
\end{equation*}
Clearly, $\mathcal C^{\id}_0\subset\mathcal C_0^{\mathrm{self}}$, and this proves \Cref{mainthm1} in the Introduction, since we may choose the orientation of $M$ for which $\tau\leq 0$ in order to apply \Cref{thm}, see \Cref{rem:f=id}.
Moreover, if \(b_2(M)\leq4\), then $\mathcal C_0^{\mathrm{self}}$ contains all (smooth spin) self-maps $f\colon (M,\g_1)\to (M,\g_0)$ of nonzero degree. Thus, more generally, we have:

\begin{corollary}\label{cor:self-maps}
A closed simply-connected Riemannian $4$-manifold $(M,\g_M)$ whose curvature operator $R$ satisfies $R+\tau\,*\succeq0$ with $\tau\leq0$ is area-extremal with respect to any (smooth spin) self-maps of degree $1$, and (smooth spin) self-maps of arbitrary nonzero degree if \(b_2(M)\leq 4\).
If, in addition, $\frac{\scal(\g_M)}{2}\g_M\succ\Ric(\g_M)\succ0$ at $q\in M$, then $(M,\g_M)$ is area-rigid at $q\in M$ with respect to the same classes of self-maps.
\end{corollary}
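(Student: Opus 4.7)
The plan is to derive this corollary directly from \Cref{thm} with $N=M$, reducing everything to verifying that the index-theoretic condition defining the competitor class $\mathcal{C}_0$ is satisfied automatically for the indicated self-maps. Simple connectivity ensures $w_2(M)=0$, so every smooth self-map is spin, and the only real content is to check the inequality $2\chi(M)+3\sigma(M)>\sigma(M)/\deg(f)$ from \Cref{kernel2}.

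First I would invoke the topological dichotomy supplied by \cite[Thm.~D]{bm-iumj}: a simply-connected closed Riemannian $4$-manifold with $R+\tau\,*\succeq 0$ and $\tau\le 0$ is either isometric to $\S^2\times\S^2$ with a product metric, or else has $b_+(M)=0$. In the first case, $R\succeq 0$ and area-extremality with respect to every class of competitors is already established in \cite{goette-semmelmann2}, so nothing remains to be proved. In the remaining case, $\sigma(M)=-b_2(M)$ and $\chi(M)=2+b_2(M)$, and after algebraic simplification the index condition reduces to
\[
4+\left(\tfrac{1}{\deg(f)}-1\right)b_2(M)>0,
\]
which is the definition of $\mathcal C_0^{\mathrm{self}}$ recorded just before the statement. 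For $\deg(f)=1$ this becomes $4>0$, so every self-map of degree~$1$ lies in $\mathcal C_0^{\mathrm{self}}$ and area-extremality follows from \Cref{thm}.

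To extend to arbitrary nonzero degree when $b_2(M)\le 4$, I would first rule out negative degrees. For any self-map $f\colon M\to M$, the pullback on $H^2(M;\Q)$ satisfies $Q(f^*\alpha,f^*\beta)=\deg(f)\,Q(\alpha,\beta)$; non-degeneracy of $Q$ forces $f^*$ to be injective whenever $\deg(f)\ne 0$, and then negative-definiteness of $Q$ on the image of $f^*$ is incompatible with $\deg(f)<0$ (which would make $\deg(f)\cdot Q$ positive-definite there). Once $\deg(f)\ge 1$ is in hand, the inequalities $1/\deg(f)-1\le 0$ and $b_2(M)\le 4$ give
\[
4+\left(\tfrac{1}{\deg(f)}-1\right)b_2(M)\ \ge\ 4+\left(\tfrac{1}{\deg(f)}-1\right)\cdot 4\ =\ \tfrac{4}{\deg(f)}\ >\ 0,
\]
so $f\in \mathcal C_0^{\mathrm{self}}$ and area-extremality again follows from \Cref{thm}. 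The area-rigidity assertion at $q$ is the corresponding rigidity conclusion of \Cref{thm} applied at that point.

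The main obstacle is purely bookkeeping: once the arithmetic simplification of the index condition and the intersection-form argument restricting the signs of possible degrees are in place, the corollary is an immediate consequence of \Cref{thm} combined with the topological dichotomy of \cite[Thm.~D]{bm-iumj}; all the genuine analytic and curvature input has already been absorbed into the proof of \Cref{thm}.
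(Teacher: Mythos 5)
Your proof is correct and follows essentially the same route as the paper: invoke the dichotomy from \cite[Thm.~D]{bm-iumj} (either $\S^2\times\S^2$ with a product metric, handled by \cite{goette-semmelmann2}, or negative-definite intersection form, hence $\sigma(M)=-b_2(M)$, $\chi(M)=2+b_2(M)$), then simplify the index condition of \Cref{kernel2} to $4+\bigl(\tfrac{1}{\deg(f)}-1\bigr)b_2(M)>0$ and apply \Cref{thm}. One point in your write-up is an actual improvement on the paper's exposition: you explicitly rule out negative-degree self-maps via the intersection form. That step is genuinely needed --- the inequality $4+\bigl(\tfrac{1}{d}-1\bigr)b_2>0$ does \emph{fail} for, say, $b_2=4$ and $d=-1$ --- and the paper leaves it implicit; your argument via $Q(f^*\alpha,f^*\alpha)=\deg(f)\,Q(\alpha,\alpha)$ and negative-definiteness of $Q$ closes that gap correctly (with the harmless observation that when $b_2(M)=0$ the inequality is $4>0$ and no sign restriction is needed). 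One small factual slip: simple connectivity does \emph{not} imply $w_2(M)=0$ --- $\C P^2$ is the standard counterexample, and indeed a degree-$d$ self-map of $\C P^2$ is spin only when $d$ is an odd square. This does not affect your proof, since the competitor classes in the corollary are already restricted to \emph{spin} self-maps, but the claim that ``every smooth self-map is spin'' should be dropped.
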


\subsection{Examples}\label{examples}
The only closed simply-connected $4$-manifolds currently known to admit  metrics with $\sec\geq0$ are $\S^4$, $\C P^2$, $\S^2\times \S^2$, $\C P^2\#\C P^2$, and $\C P^2\#\overline{\C P^2}$, and this list is conjectured to be exhaustive. We now analyze the existence of families of area-extremal and area-rigid metrics on these manifolds, starting with those known to admit $\sec>0$.

\begin{theorem}\label{thm:S4RP4CP2}
    The spaces of Riemannian metrics on $\S^4$ and $\C P^2$ have nonempty open subsets     consisting of area-rigid metrics with respect to self-maps of any nonzero degree, that contain the round metric and the Fubini--Study metric, respectively.
\end{theorem}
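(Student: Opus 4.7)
The plan is to verify the hypotheses of \Cref{cor:self-maps} at each reference metric and then appeal to the openness of those hypotheses in the space of smooth metrics. Since $b_2(\S^4) = 0$ and $b_2(\C P^2) = 1$ are both at most $4$, \Cref{cor:self-maps} yields area-rigidity against self-maps of arbitrary nonzero degree as soon as, on a neighborhood of the reference metric, one has $R + \tau\, * \succeq 0$ pointwise for some function $\tau \leq 0$, together with $\tfrac{\scal(\g)}{2}\g \succ \Ric(\g) \succ 0$ at every point. The Ricci condition is manifestly open, so the crux is exhibiting the curvature-operator condition.

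For the round metric $\g_0$ on $\S^4$, the curvature operator is a positive multiple of the identity on $\wedge^2 T\S^4$, so $R \succ 0$ and $\tau \equiv 0$ trivially gives $R + \tau\, * \succeq 0$; since $\g_0$ is Einstein with positive scalar curvature, $\Ric = \tfrac{\scal}{4}\g_0$ satisfies both Ricci inequalities. Strict positivity of $R$ is an open condition, so $\tau \equiv 0$ continues to work on a $C^2$-neighborhood of $\g_0$.

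For the Fubini--Study metric $\gFS$ on $\C P^2$, we would invoke the Singer--Thorpe decomposition together with the K\"ahler--Einstein structure. Taking the orientation \emph{opposite} to the complex one, the self-dual Weyl tensor vanishes, so $R$ reduces to $\tfrac{\scal}{12}\id$ on $\wedge^+$, while on $\wedge^-$ the anti-self-dual Weyl tensor has eigenvalues $\tfrac{\scal}{6}, -\tfrac{\scal}{12}, -\tfrac{\scal}{12}$ in an orthonormal basis containing the normalized K\"ahler form, giving $R|_{\wedge^-}$ eigenvalues $\tfrac{\scal}{4}, 0, 0$. A direct check then shows that the Finsler--Thorpe interval of \Cref{prop:FTtrick} is $[-\tfrac{\scal}{12}, 0]$, and the strictly negative constant choice $\tau \equiv -\tfrac{\scal}{12}$ satisfies $R + \tau\, * \succeq 0$; Einstein once more yields the Ricci inequalities. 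Since the lower endpoint of the Finsler--Thorpe interval at $\gFS$ is strictly negative and constant across $\C P^2$, continuity of the eigenvalues of $R + \tau\, *$ under metric perturbations ensures that a suitably chosen small constant $\tau < 0$ still satisfies $R + \tau\, * \succeq 0$ on a $C^2$-neighborhood of $\gFS$.

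The main subtlety lies in the orientation-dependent algebra for $\C P^2$: realizing $R + \tau\, * \succeq 0$ with $\tau \leq 0$ forces the choice of the anti-complex orientation, so that the K\"ahler form lies in $\wedge^-$ rather than $\wedge^+$, moving the obstructing large positive eigenvalue of $R$ to the subspace on which $*$ acts as $-1$. With that orientation in place, \Cref{cor:self-maps} applied on the two neighborhoods constructed above yields the theorem.
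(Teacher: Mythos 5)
Your argument follows essentially the same route as the paper: establish $R\succ0$ and $\tau\equiv0$ on $\S^4$; reverse the orientation on $\C P^2$ so the Finsler--Thorpe interval moves to the nonpositive half-line; diagonalize $R$ and $*$ in an orthonormal self-dual/anti-self-dual basis; note the Einstein condition gives the Ricci pinching; and apply \Cref{cor:self-maps} with $b_2\leq 4$. The only substantive stylistic difference is that you derive the eigenvalues $(2,2,2,6,0,0)$ from the Singer--Thorpe decomposition of a self-dual Einstein K\"ahler metric, while the paper just quotes the diagonal form $R_{\C P^2}=\diag(0,0,6,2,2,2)$.

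One small imprecision to fix: you first propose the boundary value $\tau\equiv-\tfrac{\scal}{12}$, for which $R+\tau\,*$ is only positive \emph{semi}definite and therefore not stable under perturbation of the metric; the openness claim should be run with an interior choice, e.g.\ $\tau\equiv-\tfrac{\scal}{24}$, giving $R+\tau\,*\succ0$ so that a single constant $\tau$ works on an entire $C^2$-neighborhood. Your closing sentence (``a suitably chosen small constant $\tau<0$'') gestures at this, but it would be cleaner to commit to an interior value from the start, as the paper does with $\tau\equiv-1$.
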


\begin{proof}
Any metric on \(\S^4\) with $R\succ0$ 
is area-rigid with respect to self-maps of any nonzero degree by \Cref{cor:self-maps} (taking $\tau\equiv0$), or by \cite{goette-semmelmann2}. In particular, there is a neighborhood of the round metric consisting of such metrics.

Let \(\g_{\C P^2}\) be the Fubini--Study metric on \(\C P^2\) and denote by $R_{\C P^2}$ its curvature operator, with $1\leq \sec\leq 4$. In a self-dual/anti-self-dual basis of $\wedge^2 T_p\C P^2$, i.e., a basis that diagonalizes the Hodge star operator as $*_{\C P^2}=\diag(1,1,1,-1,-1,-1)$, we have $R_{\C P^2}=\diag(0,0,6,2,2,2)$, so $R_{\C P^2}+\tau\, *_{\C P^2}\succeq0$ if and only if $0\leq \tau\leq 2$. Thus, to apply \Cref{thm}, we must set $M$ to be $\overline{\C P^2}$, i.e., $\C P^2$ with the opposite orientation, which is isometric to $\C P^2$ and hence has the same curvature operator $R_{\overline{\C P^2}}=R_{\C P^2}$, but $*_{\overline{\C P^2}}=-*_{\C P^2}$, so $R_{\overline{\C P^2}}+\tau\,*_{\overline{\C P^2}}\succ0$ if and only if $-2<\tau<0$. We may then take, e.g., $\tau\equiv-1$, and, by continuity, there exists a neighborhood \(\mathcal U\) of \(\g_{\C P^2}\) in the $C^2$-topology formed by metrics $\g$ that also satisfy $R_\g+\tau\,*_\g\succ0$ for the constant function $\tau\equiv-1$. Furthermore, 
since $\g_{\C P^2}$ is an Einstein metric,
\[\frac{\scal(\g_{\C P^2})}{2}\g_{\C P^2}\succ\frac{\scal(\g_{\C P^2})}{4}\g_{\C P^2}=\Ric(\g_{\C P^2})\succ0,\]
so, up to shrinking $\mathcal U$, we may assume that all \(\g\in\mathcal U\) satisfy $\frac{\scal(\g)}{2}\g\succ\Ric(\g)\succ0$. Therefore, since $b_2(\C P^2)=1$, it follows from \Cref{cor:self-maps} that \((\C P^2,\g)\) is area-rigid with respect to self-maps of any nonzero degree, for all $\g\in\mathcal U$.
\end{proof}

Let us consider the remaining simply-connected $4$-manifolds known to admit $\sec\geq0$. On $\S^2\times \S^2$, any product of metrics  on $\S^2$ with $\sec>0$ has $R\succeq0$ and $\frac{\scal(\g)}{2}\g \succ\Ric(\g)\succ0$, thus it is area-rigid with respect to self-maps of any nonzero degree by \Cref{cor:self-maps}; or, in a more general sense, by \cite{goette-semmelmann2}.
On \({\C}P^2\#\overline{{\C}P}^2\) and \({\C}P^2\#{\C}P^2\), metrics with $\sec\geq0$ were constructed by Cheeger \cite{cheeger}, using a gluing method that 
endows the connected sum $M_1\# M_2$ of any two (oriented) compact $n$-dimensional rank one symmetric spaces with $\sec\geq0$. These \emph{Cheeger metrics} contain a \emph{neck} region isometric to a round cylinder $\S^{n-1}\times [0,\ell]$, to which the complement of a ball in each $M_i$ is glued. The length $\ell\geq0$ of the neck can be chosen arbitrarily, including $\ell=0$.
There do not exist metrics on \({\C}P^2\#\overline{{\C}P}^2\) that satisfy the hypotheses of \Cref{thm}, as a consequence of \cite[Thm.~D]{bm-iumj}. However, \({\C}P^2\#\overline{{\C}P}^2\) admits K\"ahler metrics of positive Ricci curvature \cite{yau-calabiconj}, 
which are area-rigid by \cite{goette-semmelmann1}.
Meanwhile, existence of area-extremal or area-rigid metrics on \({\C}P^2\#{\C}P^2\) cannot be established with any previous results, since they either require metrics that are K\"ahler or have positive-semidefinite curvature operator, and \({\C}P^2\#{\C}P^2\) admits neither. We overcome these restrictions as follows: 

\begin{theorem}\label{thm:cheeger-metrics}
The Cheeger metrics on \({\C}P^2\#{\C}P^2\) are area-extremal with respect to  self-maps of any nonzero degree, and area-rigid with respect to these maps on all points
in the complement ${\C}P^2\#{\C}P^2 \setminus (\S^3\times [0,\ell])$ of the neck region.
\end{theorem}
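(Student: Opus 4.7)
The plan is to derive Theorem~\ref{thm:cheeger-metrics} from Corollary~\ref{cor:self-maps}, applied to $M=\C P^2\#\C P^2$ endowed with a Cheeger metric $\g$. Since $M$ is simply-connected with $b_2(M)=2\leq 4$, the competitor class consists of all smooth spin self-maps of nonzero degree, so it suffices to establish (i) a globally non-positive $\tau\colon M\to\R$ with $R+\tau\,*\succeq 0$ (yielding area-extremality), and (ii) that $\tfrac{\scal(\g)}{2}\,\g\succ\Ric(\g)\succ 0$ on $M\setminus(\S^3\times[0,\ell])$ (yielding area-rigidity there).

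For (i), I would analyze $R+\tau\,*$ separately on the three regions of the Cheeger construction. On the neck $\S^3\times[0,\ell]$, with an orthonormal frame $e_1,e_2,e_3$ tangent to $\S^3$ and $e_4=\partial_r$, the curvature operator is $\diag(1,1,1,0,0,0)$ in the basis $(e_i\wedge e_j)_{i<j}$, while $*$ pairs the three 2-forms tangent to $\S^3$ with the three containing $\partial_r$ via off-diagonal $\pm 1$ blocks. The resulting $2\times 2$ blocks of $R+\tau\,*$ have determinant $-\tau^2\leq 0$, so $R+\tau\,*\succeq 0$ forces $\tau\equiv 0$ on the neck. On each cap's unmodified Fubini--Study interior, after reversing the orientation of $M$, the computation from the proof of Theorem~\ref{thm:S4RP4CP2} yields the admissible interval $[-2,0]$, which lies in the non-positive half-line and contains $\tau=0$. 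The key technical step is the transition region where Cheeger deforms the Berger-sphere geodesic boundary into a round sphere via a cohomogeneity-one interpolation. I would compute $R+\tau\,*$ explicitly for this interpolating metric in a self-dual/anti-self-dual basis, and verify that the Finsler--Thorpe admissible interval $[\tau_{\min}(p),\tau_{\max}(p)]$ meets $(-\infty,0]$ at every point. Pasting the pointwise choices produces a global $\tau\leq 0$ with $R+\tau\,*\succeq 0$, so Corollary~\ref{cor:self-maps} gives area-extremality with respect to self-maps of any nonzero degree.

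For (ii), on the Fubini--Study portions the Einstein condition $\Ric=\tfrac{\scal}{4}\,\g$ with $\scal>0$ gives $\tfrac{\scal}{2}\,\g\succ\Ric\succ 0$ directly. On the transition region, the interpolating cohomogeneity-one metric must be arranged so that all three principal Ricci eigenvalues remain strictly positive and no single one exceeds the sum of the other two, i.e., $\tfrac{\scal}{2}\,\g\succ\Ric\succ 0$ pointwise. Taken together, these cover every point of $M\setminus(\S^3\times[0,\ell])$, and the rigidity conclusion of Corollary~\ref{cor:self-maps} applies at each such point.

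The principal obstacle is producing, on the transition region, an explicit cohomogeneity-one interpolation from Fubini--Study to the round cylinder that simultaneously satisfies the Finsler--Thorpe constraint with non-positive $\tau$ and preserves $\tfrac{\scal}{2}\,\g\succ\Ric\succ 0$, all while matching smoothly to the Fubini--Study metric on one side and the round cylinder on the other. These are coupled inequalities in the warping functions and their first two derivatives, and verifying that they can be met jointly is the technical heart of the argument; analogous cohomogeneity-one constructions (e.g., Grove--Ziller type metrics, for which the authors acknowledge conversations) suggest the needed flexibility is present.
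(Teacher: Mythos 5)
Your high-level strategy matches the paper's: reduce to Corollary~\ref{cor:self-maps} (so the competitor class is all spin self-maps of nonzero degree, since $b_2=2$), verify a global nonpositive $\tau$ with $R+\tau\,*\succeq0$, and verify the strict Ricci pinching off the neck. Your neck analysis is correct ($\tau\equiv 0$ is forced there), and your Fubini--Study cap analysis correctly imports the computation $[-2,0]$ from the proof of Theorem~\ref{thm:S4RP4CP2} after reversing orientation.

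However, there is a genuine gap: everything hinges on the transition region, and you explicitly leave that verification undone, calling it the ``technical heart'' and asserting that the needed flexibility ``is present'' by analogy. That is precisely the step the paper actually carries out. The paper does not decompose $\nu(\C P^1)$ into a pristine Fubini--Study core, a transition collar, and a cylinder; instead it realizes the \emph{entire} Cheeger metric on $\nu(\C P^1)$ as a Grove--Ziller cohomogeneity-one metric $\mathsf{G}\times_{\mathsf K}D^2$ with a single warping function $\varphi$ satisfying $\varphi(0)=0$, $\varphi'(0)=1/2$, $\varphi'\geq0$, $\varphi''\leq0$, and $\varphi\equiv b$ near the boundary, then imports the explicit block-diagonal curvature operator from \cite[Prop.~3.5]{bk-rf}. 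From that formula one reads off, pointwise and globally, that the \emph{unique} admissible $\tau$ is $-\varphi'/(2b^2)\leq 0$, and direct computation gives $\tfrac{\scal}{2}\g\succ\Ric\succeq0$ everywhere with $\Ric\succ0$ exactly where $\varphi''<0$ (i.e., off the cylindrical part). This uniform treatment is what makes the argument go through; your proposal gestures at the existence of such a computation but does not supply it, nor does it notice that a single warping function handles the cap, transition, and cylinder at once. Without an explicit formula for $R$ and $\Ric$ on the transition region, neither the nonpositivity of $\tau$ nor the pinching $\tfrac{\scal}{2}\g\succ\Ric\succ0$ can be concluded.

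A secondary remark: the Finsler--Thorpe interval at a point where $\sec\geq0$ but not $\sec>0$ (e.g., on the neck and on the transition collar where $R$ is degenerate) degenerates to a single point, so there is no freedom to ``paste pointwise choices''; the function $\tau$ is \emph{forced} throughout $\nu(\C P^1)$ to equal $-\varphi'/(2b^2)$. Your framing of choosing $\tau$ region by region and pasting obscures this rigidity, which the paper's uniform formula makes transparent.
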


\begin{proof}
The manifold \({\C}P^2\#{\C}P^2\) is obtained gluing together two equally oriented copies of the normal disk bundle $\nu(\C P^1)$ of $\C P^1\subset\C P^2$ along their  boundary $\S^3$, with an orientation-reversing diffeomorphism $\alpha$. 
Since bi-invariant metrics on $\mathsf{SU}(2)\cong\S^3$ are round (in particular, they support orientation-reversing isometries), Cheeger metrics on each copy of $\nu(\C P^1)$ are isometric to the metrics with $\sec\geq0$ constructed by Grove and Ziller~\cite{grove-ziller-annals} on $\nu(\C P^1)\cong \mathsf G\times_{\mathsf K} D^2$ that are invariant under the action of $\mathsf G=\mathsf{SU}(2)$ with orbit space $\nu(\C P^1)/\mathsf G=\left[0,r_{\mathrm{max}}\right]$, where $0$ corresponds to the singular orbit with isotropy $\mathsf K=\S^1$ and all other (principal) orbits have trivial isotropy $\mathsf H=\{1\}$.
A detailed treatment of Grove--Ziller metrics on a related disk bundle $\mathsf G'\times_{\mathsf K'} D^2$ over $\C P^1=\mathsf G/\mathsf K=\mathsf G'/\mathsf K'$
is given in \cite{bk-rf}, where $\mathsf G'=\mathsf{SO}(3)$, $\mathsf K'=\mathsf{SO}(2)_{1,2}$, and $\mathsf H'=\Z_2$, including the explicit computation of their curvature operator, see \cite[Prop.~3.5]{bk-rf}. This computation applies mutatis mutandis to $\mathsf G\times_{\mathsf K} D^2$, as the Lie groups $(\mathsf G,\mathsf K,\mathsf H)$ and $(\mathsf G',\mathsf K',\mathsf H')$ have isomorphic Lie algebras. Namely, one must only replace the constant $\rho(b)=b/2$ with $\rho(b)=2b$ when passing from $\mathsf G'\times_{\mathsf K'} D^2$ to $\mathsf G\times_{\mathsf K} D^2$. 
Then \cite[Prop.~3.5]{bk-rf} implies that, in a basis induced by an orthonormal frame, the curvature operator $R$ of a Grove--Ziller metric $\g$ on $\nu(\C P^1)\cong \mathsf G\times_{\mathsf K} D^2$ is block diagonal, i.e., $R=\diag(R_1,R_2,R_3)$, where
 \begin{equation}\label{eq:curvopGZ}
 R_1 = \begin{bmatrix}
           \frac{4b^2 - 3\varphi^2}{4b^4} & -\frac{\varphi'}{b^2} \\
           -\frac{\varphi'}{b^2} & -\frac{\varphi''}{\varphi}
          \end{bmatrix}, \quad 
  R_2 = R_3 = \begin{bmatrix}
           \frac{\varphi^2}{4b^4} & \frac{\varphi'}{2b^2} \\
           \frac{\varphi'}{2b^2} & 0
          \end{bmatrix},
 \end{equation}
$b>0$ is an arbitrary constant, $\varphi\colon \left[0,r_{\mathrm{max}}\right]\to\R$ is a nonnegative smooth function satisfying $\varphi(0)=0$, $\varphi'(0)=1/2$, $\varphi^{(2k)}(0)=0$ for all $k\in\mathds N$, 
$\varphi'(r) \geq 0$ for all $r\in \left[0,r_{\mathrm{max}}\right]$,
$\varphi''(r)\leq 0$ for all $r\in\left[0,r_{\mathrm{max}}\right]$, and $\varphi(r)\equiv b$ for all $r\in \left[r_0,r_{\mathrm{max}}\right]$, with $0<r_0\leq r_{\mathrm{max}}$ and $r_0$ chosen sufficiently large. 

Clearly, all data on $(\nu(\C P^1),\g)$ which is invariant under isometries, such as $R$, is determined by its value along a unit speed horizontal geodesic parametrized on $\left[0,r_{\mathrm{max}}\right]$, and $(\nu(\C P^1),\g)$ is isometric to the round cylinder $\S^3(2b)\times [r_0,r_{\mathrm{max}}]$ near its boundary, where $\S^3(2b)$ is a round $3$-sphere of radius~$2b$. Thus, gluing two copies of $(\nu(\C P^1),\g)$ along $\S^3(2b)$ using an orientation-reversing isometry $\alpha$ produces a smooth metric on 
\({\C}P^2\#{\C}P^2=\nu(\C P^1)\cup_{\alpha} \nu(\C P^1)\), which we also denote by $\g$.
The preimage of $\left[r_0,r_{\mathrm{max}}\right]$ under the projection map $\nu(\C P^1)\to \left[0,r_{\mathrm{max}}\right]$ is half of the neck region in \({\C}P^2\#{\C}P^2\); in particular, the neck region is isometric to $\S^3(2b)\times [0,\ell]$, where  $\ell=2(r_{\mathrm{max}}-r_0)\geq0$. Note that while $r_0>0$ must be chosen sufficiently large in order for $\varphi$ as above to exist, the value of $r_{\mathrm{max}}\geq r_0$ is arbitrary.

The matrix of the Hodge star operator $*$ is also block diagonal in the basis used in \eqref{eq:curvopGZ}, namely $* = \operatorname{diag}(H, H, H)$, where 
\begin{equation*}
H= \begin{bmatrix}
      0 & 1\\
      1 & 0
     \end{bmatrix}.
\end{equation*}
Therefore, the unique function $\tau\colon \nu(\C P^1)\to\R$ such that $R+\tau\,*\succeq0$ is $\tau=-\frac{\varphi'}{2b^2}$, and hence the unique function $\tau \colon \C P^2\# \C P^2\to\R$ such that $R+\tau\,*\succeq0$ satisfies $\tau\leq 0$ and $\tau=0$ along the neck region. 
Routine computations with \eqref{eq:curvopGZ} yield
\begin{equation*}
\begin{aligned}
\Ric(\g)&=\operatorname{diag}\left(\tfrac{\varphi^2}{2b^4}-\tfrac{\varphi''}{\varphi},\; \tfrac{1}{b^2}-\tfrac{\varphi^2}{2b^4},\; \tfrac{1}{b^2}-\tfrac{\varphi^2}{2b^4}, \;-\tfrac{\varphi''}{\varphi} \right),\\
\scal(\g)&=\tfrac{2}{b^2}-\tfrac{\varphi^2}{2b^4}-2\tfrac{\varphi''}{\varphi},\\
\tfrac{\scal(\g)}{2}\g  - \Ric(\g)&=\operatorname{diag}\left(
\tfrac{4b^2-3\varphi^2}{4b^4},\; \tfrac{\varphi^2}{4b^4}-\tfrac{\varphi''}{\varphi},\;\tfrac{\varphi^2}{4b^4}-\tfrac{\varphi''}{\varphi},\;\tfrac{4b^2-\varphi^2}{4b^4}\right),
\end{aligned}
\end{equation*}
for all $r\in [0,r_{\mathrm{max}}]$.
In particular, $\tfrac{\scal(\g)}{2}\g \succ \Ric(\g) \succeq0$ on all of $\C P^2\# \C P^2$, and $\Ric(\g)\succ0$  outside the neck region.
Thus, as $b_2(\C P^2\# \C P^2)=2$,  \Cref{cor:self-maps} implies that $\g$ is area-extremal with respect to self-maps of any nonzero degree, and area-rigid with respect to these maps on all points outside the neck region.
\end{proof}

\begin{remark}\label{rem:noneck}
The statement concerning area-rigidity in \Cref{thm:cheeger-metrics} is sharp. In fact, given a Cheeger metric $\g_0$ on $\C P^2\# \C P^2$ with neck length $\ell_0\geq0$, one can easily produce nonisometric competitor Cheeger metrics $\g_1$ that satisfy \eqref{eq:g1-competitor-global} by elongating the neck length to any $\ell_1>\ell_0$. However, Cheeger metrics with \emph{vanishing} neck length are area-rigid (at all points) with respect to \emph{diffeomorphisms}, since any {diffeomorphism which is an} isometry {on} the complement $\C P^2\# \C P^2\setminus(\S^3\times\{0\})$ of the neck hypersurface, which is dense, must be an isometry on all of $\C P^2\# \C P^2$. 
\end{remark}

\Cref{thm:S4RP4CP2,thm:cheeger-metrics} and \Cref{rem:noneck} imply \Cref{maincor:CP2CP2} in the Introduction.

\begin{remark}
Although both halves of \(\nu(\C P^1)\cup_{\alpha} \nu(\C P^1)={\C}P^2\#{\C}P^2\) have cohomogeneity one $\mathsf{SU}(2)$-actions, these actions do not extend to \({\C}P^2\#{\C}P^2\). Indeed, the gluing isometry $\alpha\colon \S^3(2b)\to\S^3(2b)$ is orientation-reversing and hence cannot be $\mathsf{SU}(2)$-equivariant. Using an orientation-\emph{preserving} isometry $\beta$ instead, the resulting manifold is $\nu(\C P^1)\cup_{\beta} \nu(\C P^1)=\C P^2\# \overline{\C P^2}$, which carries a cohomogeneity one $\mathsf{SU}(2)$-action and invariant Cheeger metrics with $\sec\geq0$. Nevertheless, the corresponding function $\tau$ assumes \emph{opposite} signs on each half due to the flip in orientation, hence \Cref{thm} and \Cref{cor:self-maps} do not apply, cf.~\cite[Thm.~D]{bm-iumj}.
\end{remark}

\section{Extremality and rigidity on 4-manifolds with boundary}\label{4boundary}

In this section, we prove Theorems \ref{mainthm-local1} and \ref{mainthm-local2} in the Introduction, adapting the approach from the previous section to the case of compact manifolds with boundary.
We begin by generalizing the notions of area-extremality and area-rigidity for closed manifolds in \Cref{def:extremality-global} to manifolds with boundary, along the lines of \cite{lott-spin}. 

Given a compact Riemannian manifold $(M,\g)$ with boundary $\partial M$, we denote by $\II_{\partial M}$ the second fundamental form of $\partial M$ with respect to the inward unit normal, and by $H(\g)=\operatorname{tr} \II_{\partial M}$ the mean curvature of $\partial M$ computed accordingly.

\begin{definition}\label{def:extremality-local}
	A compact oriented Riemannian manifold $(M,\g_M)$ with boundary is \emph{area-extremal with respect to a class $\mathcal C=\{ f\colon (N,\g_N)\to (M,\g_M) \}$} of competitors, consisting of compact oriented Riemannian manifolds $(N,\g_N)$ with boundary such that $\dim M=\dim N$, and smooth boundary-preserving spin maps $f\colon N\to M$ of nonzero degree, if the inequalities
	\begin{equation}\label{eq:scal-comp-local}
	\begin{aligned}
	&\wedge^2\g_N\succeq f^*\wedge^2 \g_M, &\quad \scal(\g_N)&\geq \scal(\g_M)\circ f,\\
	&\g_N|_{\partial N}\succeq f^* \g_M|_{\partial M}, &\quad H(\g_N)&\geq H(\g_M)\circ f
	\end{aligned}
	\end{equation}
	imply $\scal(\g_N)=\scal(\g_M)\circ f$ and \(H(\g_N)= H(\g_M)\circ f\). If, in addition, there exists $q\in M$ such that \eqref{eq:scal-comp-local} implies $\dd f(p)\colon T_pN\to T_{q}M$ is a linear isometry for all competitors $f\colon N\to M$ in $\mathcal C$ and all $p\in f^{-1}(q)$, then $(M,\g_M)$ is called \emph{area-rigid at $q\in M$ with respect to $\mathcal C$}. If $(M,\g_M)$ is area-rigid at all of its points with respect to $\mathcal C$, then it is simply called \emph{area-rigid with respect to $\mathcal C$.}
\end{definition}

The notions of area-extremality and area-rigidity for manifolds with boundary discussed in the Introduction correspond to using the class
	\[\mathcal C^{\text{id}}_\partial =\big\{ \id\colon (M,\g_1)\to (M,\g_0) : \g_1|_{\partial M}=\g_0|_{\partial M}\big\}.\]
Note that in the case $\partial M=\emptyset$, the class $\mathcal C^{\text{id}}_\partial$ is simply $\mathcal C^{\text{id}}_0$ from \Cref{rem:f=id}.

\subsection*{Index Theory}
Let \((M,\g_M)\) and \((N,\g_N)\) be oriented Riemannian $4$-manifolds as in \Cref{4closed}, except here we allow these manifolds to have nonempty boundary and assume \(f|_{\partial N}\colon \partial N\to \partial M\) is an orientation-preserving isometry, which we also refer to as an \emph{oriented isometry} for shortness.

Let \(E=f^*S^+(TM)\).  As described in \Cref{4closed}, \(\g_N\) and \(\g_M\) induce connections \(\nabla^{S_N}\) and \(\nabla^{S_M}\) on \(S(TN)\) and \(S(TM)\), respectively.  The connections \(\nabla^{S_N}\) and \(f^*\nabla^{S_M}\) then induce a connection \(\nabla\) on \(S(TN)\otimes E\), which is used to define the twisted Dirac operator \(D_E\) on sections of that bundle.  While \(\nabla\) restricts to a connection on \(S^+(TN)\otimes E|_{\partial N}\), we will next define another connection \(\nabla^\partial\) on that bundle, which is induced by the boundary metrics  \(\g_N|_{\partial N}\) and \(\g_M|_{\partial M}\), rather than the ambient metrics \(\g_N\) and \(\g_M\), and thus differs from \(\nabla\) by a term related to the second fundamental form of the boundaries, see \eqref{sff}.  

Let \(r\colon N\to\R\) be the distance from \(\partial N\) and choose \(\varepsilon>0\) smaller than the focal radius of $\partial N$, so that the normal exponential map of \(\partial N\) is a diffeomorphism from \(\partial N\times [0,\varepsilon]\) to \(U=\{x\in N : r(x)\leq \varepsilon\}\). Let \(\nu=\partial _r,\) so \(\nu|_{\partial N}\) is the inward unit normal.  There is an isomorphism \[\cl(T\partial N)\longrightarrow \cl^0(TN)|_{\partial N}\] generated by \( T\partial N\ni v \mapsto -\nu v\in\cl(TN).\) Using this isomorphism, we identify \(S(T\partial N)\) and \(S^+(TN)|_{\partial N}\).  Let \(\nabla^{\partial N}\) be the connection on \(S(T\partial N)=S^+(TN)|_{\partial N}\) induced by the restriction of $\g_N$ to \(\partial N\).  
Similarly, let \(\nabla^{\partial M}\) be the connection induced by $\g_M|_{\partial M}$ on \(S(T\partial M)=S^+(TM)|_{\partial M}.\)  The connections \(\nabla^{\partial N}\) and \(f^*\nabla^{\partial M}\) induce a connection \(\nabla^\partial \) on \(S^+(TN)\otimes E|_{\partial N}\).

Using parallel translation along normal geodesics, we can identify
\begin{equation}\label{collar}
\Gamma\left(S(TN)\otimes E|_U\right)\cong C^\infty\big([0,\varepsilon],\,\Gamma\left(S(TN)\otimes E|_{\partial N}\right)\!\big),
\end{equation}
that is, sections of $S(TN)\otimes E$ on the collar neighborhood $U$ are $1$-parameter families of sections of $S(TN)\otimes E$ on the boundary $\partial N$.
Using this identification, we have
\[D_E|_{\Gamma\left(S^+(TN)\otimes E|_U\right)}=\nu\left(\partial_r+B+ A\right),\]
where, in a local orthonormal frame \(\{e_i\}\) for \(TN\) such that \(e_1=-\nu,\)
\begin{equation}\label{eq:BtildeA}
B=\sum_{i=2}^4e_1e_i\nabla^\partial_{e_i}, \qquad A=\sum_{i=2}^4e_1e_i\left(\nabla_{e_i}-\nabla^\partial_{e_i}\right).
\end{equation}
Recall that Clifford multiplication is only on the first factor of \(S(TN)\otimes E\), so \(B\) is a first-order self-adjoint elliptic differential operator on \(S(TN)\otimes E|_{\partial N}\) and \(A\) is a self-adjoint linear endomorphism of the same bundle.  

Let \(P_{>0}\) and \(P_{\leq0}\) be the projections onto the subspaces of \(\Gamma(S^+(TN)\otimes E|_{\partial N})\) spanned by eigenvectors of \(B\) with positive and nonpositive eigenvalues, respectively. We impose boundary conditions on \(D_E\) by setting $D_E^+:=D_E|_{\Gamma^+}$, where
\begin{equation}\label{eq:gamma+def}
\Gamma^+:=\{\xi\in\Gamma(S^+(TN)\otimes E) : P_{>0}\left(\xi|_{\partial N}\right)=0\}.	
\end{equation}
The adjoint $D_E^-$ of \(D_E^+\) is the restriction of \(D_E\) to \[\Gamma^-:=\{\xi\in\Gamma(S^-(TN)\otimes E) : P_{\leq0}\left(\nu\xi|_{\partial N}\right)=0\}.\] 
We use the convention that \(\sigma(M)\) is the signature of the bilinear form induced by the cup product on the image of \(H^2(M,\partial M)\) in \(H^2(M)\), and similarly for $\sigma(N)$.

\begin{lemma}\label{lem:section}
    If \(2\chi(M)+3\sigma(M)+2b_0(\partial M)+2b_2(\partial M)>\sigma(N)\), then $D_E^+$  has nontrivial kernel.
\end{lemma}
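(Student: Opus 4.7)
The plan is to compute $\operatorname{ind}(D_E^+)$ using the Atiyah--Patodi--Singer index theorem and observe that the stated hypothesis forces it to be strictly positive.

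The spectral boundary condition defining $\Gamma^+$ in \eqref{eq:gamma+def} is precisely the APS boundary condition, so the theorem yields
\begin{equation*}
\operatorname{ind}(D_E^+)=\int_N \hat{A}(TN)\,f^*\!\operatorname{ch}\bigl(S^+(TM)\bigr)-\tfrac12\bigl(h(B)+\eta(B)\bigr),
\end{equation*}
with $h(B)=\dim\ker B$ and $\eta(B)$ the eta invariant of the boundary operator from \eqref{eq:BtildeA}. The integrand expands exactly as in the proof of Lemma \ref{kernel2}, producing $-\tfrac{1}{12}p_1(TN)+\tfrac14 f^*p_1(TM)+\tfrac12 f^*e(TM)$. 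I would then convert each of these interior integrals into a topological invariant plus a boundary correction, using the APS signature theorem $\sigma(X)=\tfrac13\int_X p_1(TX)-\eta_{\mathrm{sig}}(\partial X)$ applied to both $N$ and $M$, together with the Chern--Gauss--Bonnet formula with boundary $\chi(M)=\int_M e(TM)+\int_{\partial M}\Pi$, where $\Pi$ is the standard transgression built from $\II_{\partial M}$ and the intrinsic curvature of $\partial M$. Since $f|_{\partial N}\colon\partial N\to\partial M$ is an oriented isometry, $\deg f=1$ by the standard relation between the degree of a map of manifold-with-boundary pairs and its boundary restriction, and $f^*$ identifies every boundary-intrinsic quantity on $\partial M$ with its counterpart on $\partial N$; in particular $\eta_{\mathrm{sig}}(\partial N)=\eta_{\mathrm{sig}}(\partial M)$ and $\int_{\partial N}f^*\Pi=\int_{\partial M}\Pi$.

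The critical step is to identify the boundary data. Via the Clifford identifications in \eqref{decompositions} together with the boundary isometry, the bundle $S^+(TN)\otimes E|_{\partial N}$ is identified with the bundle $\wedge^{\even}_{\C}T^*\partial M$ of complex even-degree forms on $\partial M$, and the boundary operator $B$ becomes conjugate to the classical odd signature operator on these forms. Consequently $\eta(B)$ combines with the signature etas and the Chern--Gauss--Bonnet transgression to cancel, while $h(B)$ equals the dimension of the space of harmonic even-degree forms on $\partial M$, which by Hodge theory is $b_0(\partial M)+b_2(\partial M)$. Collecting all contributions, one arrives at
\begin{equation*}
\operatorname{ind}(D_E^+)=\tfrac14\bigl(2\chi(M)+3\sigma(M)+2b_0(\partial M)+2b_2(\partial M)-\sigma(N)\bigr),
\end{equation*}
so the hypothesis gives $\operatorname{ind}(D_E^+)>0$, and hence $\ker D_E^+\neq\{0\}$.

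The main obstacle I expect is the spinorial bookkeeping of the previous paragraph: verifying through the gradings recalled in \Cref{subsec:CAandS} that, after the boundary isometry, $B$ is genuinely conjugate to the odd signature operator on $\wedge^{\even}_{\C}T^*\partial M$, so that all eta invariants and the Chern--Gauss--Bonnet transgression cancel and $h(B)$ equals $b_0(\partial M)+b_2(\partial M)$. Once these identifications are in place, the rest is purely topological.
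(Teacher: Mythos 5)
The proposal correctly identifies the target index formula, correctly recognizes (via the Clifford gradings in \Cref{subsec:CAandS}) that $B$ is conjugate to the even/odd signature operator on $\partial M$ so that $h(B)=b_0(\partial M)+b_2(\partial M)$ by Hodge theory, and correctly derives $\deg f=1$. All of this matches the paper. But there is a genuine gap at the outset, in the claim that $\Gamma^+$ is ``precisely the APS boundary condition'' and that the APS index formula therefore applies directly.

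The operator $B$ in \eqref{eq:BtildeA} is the \emph{intrinsic} boundary Dirac operator, built from the boundary metrics $\g_N|_{\partial N}$, $\g_M|_{\partial M}$ via $\nabla^\partial$, whereas the full tangential part of $D_E$ near $\partial N$ is $B+A$, with $A$ a zeroth-order endomorphism that does \emph{not} vanish in general: it is built from the second fundamental forms $\II_{\partial N}$ and (pulled back) $\II_{\partial M}$, cf.~\eqref{sff}. The boundary condition $\Gamma^+$ is defined by the spectral projections of $B$, not of $B+A$, so $D_E^+$ is not presented in the product form required by the original Atiyah--Patodi--Singer theorem. The paper handles this by invoking Grubb's results \cite{Grubb}: these boundary conditions still give a Fredholm realization of $D_E^\pm$ whose index is invariant under deformations that fix the boundary data and vary the principal symbol continuously. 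One then deforms $\g_N$ to be of product form on a collar, deforms $f$ to be a diffeomorphism onto its image there, and deforms $\g_M$ so that $f$ is an isometry on the collar; this forces $A=0$, and APS, the APS Signature Theorem, and Chern--Gauss--Bonnet then apply in their clean, transgression-free versions. Your alternative of keeping the original geometry and hoping that all the transgression terms (from the index density, the two signature formulas, and Chern--Gauss--Bonnet) cancel against $\eta(B)$ is not justified and is unlikely to survive scrutiny without the deformation: you would also need a non-product APS theorem valid for boundary conditions defined by $B$ rather than by $B+A$, which is exactly the technical point you have elided. You flag the conjugation of $B$ to the signature operator as the main obstacle, but that part is routine; the real difficulty is the one Grubb's theorem resolves.

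There is also a sign issue: $\Gamma^+$ is defined by the \emph{strictly} positive projection $P_{>0}$ rather than $P_{\geq 0}$, so it is not the standard APS condition for $D_E^+$. The paper therefore applies APS to the adjoint $D_E^-$ (whose domain $\Gamma^-$ carries the genuine APS condition) and takes the negative, giving the $+\tfrac12 h(B)$ term in \eqref{eq:APS-1}. Your stated APS formula has $-\tfrac12 h(B)$, which is inconsistent with your final displayed index formula, indicating a compensating slip somewhere in the bookkeeping.
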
   

\begin{proof}
With the boundary conditions given by $\Gamma^+$, we claim that the index of the operator \(D_E^+\) is
\begin{equation}\label{eq:ind-bdy}
	\text{ind}\big(D_E^+\big)=\tfrac14\big(-\sigma(N)+2\chi(M)+3\sigma(M)+2b_0(\partial M)+2b_2(\partial M)\big),
\end{equation}
which implies the desired result, since \(\text{ind}(D_E^+)=\dim\ker(D_E^+)-\dim\ker(D_E^-).\)

Using the identification \eqref{collar}, the formal adjoint of \(D_E^+\) on $U$ is 
\[D_E^-=(-\partial_r+B+A)(-\nu)=\nu(\partial_r+\nu B\nu+\nu A\nu).\]
This operator satisfies assumptions (2.2) in \cite{Grubb}, with \(P\), $\sigma_0$, \(A,\) and \(\psi\) in the notation of \cite{Grubb} given by \(D_E^-\), \(\nu\), \(\nu B\nu\), and \(\nu A\nu|_{\partial N}\), respectively, in our notation.  

Let \(P'_{\geq0}\) and \(P'_{<0}\) be the projections onto the subspaces of \(\Gamma(S^-(TN)\otimes E|_{\partial N})\) spanned by eigenvectors of \(\nu B\nu\) with nonnegative and negative eigenvalues, respectively. Since \(\nu\) is skewsymmetric, we have that
\begin{equation*}
\begin{aligned}
\Gamma^-&=\{\xi\in\Gamma(S^-(TN)\otimes E)  : P'_{\geq0}(\xi|_{\partial N})=0\},\\
\Gamma^+&=\{\xi\in\Gamma(S^+(TN)\otimes E) : P'_{<0}\left(\nu\xi|_{\partial N}\right)=0\}.
\end{aligned}
\end{equation*}
These spaces define exactly the boundary conditions given by (2.8) in \cite{Grubb}. Thus, by \cite[Lemma 2.2]{Grubb}, we can realize \(D_E^-\) with boundary conditions \(\Gamma^-\) as a Fredholm operator, with adjoint given by a realization of \(D_E^+\) with boundary conditions \(\Gamma^+\). Furthermore, by \cite[Thm.~2.3]{Grubb}, and the discussion which follows (see in particular the paragraph preceding Corollary 2.4), the index of \(D_E^-\) with these boundary conditions is constant under a deformation that fixes the boundary conditions \(\Gamma^\pm\) and induces a continuous deformation of the principal symbol of \(D_E^-\). In particular, a smooth deformation of the map $f\colon N\to M$ and of the metrics $\g_M$ and $\g_N$ that keeps the boundary conditions fixed meets those stipulations, since the principal symbol of \(D_E^-\), as a twisted Dirac operator, is given by Clifford multiplication on \(S^-(TN)\otimes E\) and varies continuously with the metric \(\g_N.\)

Thus, since \(B\) depends only on \(f|_{\partial N}\) and its tangential derivatives, along with \(\g_N|_{\partial N}\) and \(\g_M|_{\partial M}\), we may deform \(\g_N\) without changing the index and assume that \(\g_N|_U=\g_N|_{\partial N}+\dd r^2\) with respect to the identification \(U\cong \partial N\times [0,\varepsilon]\),  deform \(f|_{U}\) so that it is a diffeomorphism onto its image, and deform \(\g_M\) so that \(f|_{U}\) is an isometry.  It follows that \(\nabla^{S_N}\) and \(f^*\nabla^{S_M}\) are isomorphic and independent of \(r\). Since \(\g_N\)  is of product form near \(\partial N,\) the slices  \(\partial N\times\{r\}\) are totally geodesic and \(\nabla^{S_N}\) is equal to \(\nabla^{\partial N}\) when restricted to each, so \(\nabla=\nabla^\partial.\)   
Thus \(A=0\), and, near the boundary,
\[D_E^-=\nu(\partial_r+\nu B\nu).\]
This is the product form and boundary conditions used by Atiyah--Patodi--Singer for the operator \(D_E^-\) with boundary operator \(\nu B\nu\) and adjoint \(D_E^+\). (Note that the conditions for \(D_E^+\) and \(D_E^-\) differ by the role of the strict and nonstrict inequalities.) Thus, applying  the Atiyah--Patodi--Singer Index Theorem~\cite[Thm.~4.2]{APS} to \(D_E^-\), 
\begin{equation}\label{eq:APS-1}
\begin{aligned}
\text{ind}\big(D_E^+\big)=-\text{ind}\big(D_E^-\big)&=-\int_{N} a(D_E^-)+\frac{h(\nu B\nu)+\eta(\nu B\nu)}{2}\\
&=\int_{N} a(D_E^+)+\frac{h(B)-\eta(B)}{2},
\end{aligned}
\end{equation}
where \(a\) is the integrand in the Atiyah--Singer Index Theorem (in the closed case), which has opposite signs for adjoints, \(h(\cdot)=\dim\ker(\cdot)\) is the dimension of the kernel, and \(\eta\) is the \(\eta\)-spectral invariant, which commutes with sign changes. (Note that \(\nu B \nu=-(\nu^{-1}B\nu)\) has opposite spectrum to \(B\).)
    
By the discussion in the closed case (see \Cref{kernel2}), we have that
\[a(D_E^+)=-\tfrac{1}{12} p_1(\g_N) +\tfrac{1}{4} f^*p_1(\g_M)+\tfrac{1}{2} f^*e(\g_M),\]
where \(p_1\) and \(e\) denote the Pontryagin and Euler forms from Chern--Weil theory. Since \(f|_{\partial N}\) is an oriented isometry, there is an isomorphism of \(S^+(TN)|_{\partial N} \cong f^*S^+(TM)|_{\partial M}\) identifying \(\nabla^{\partial N}\) with \(f^*\nabla^{\partial M}\).
Under the isomorphisms
\[S^+(TN)\otimes E|_{\partial N}\cong S^+(TN)\otimes S^+(TN)|_{\partial N}\cong \wedge_\C^{+,\text{even}}TN^*|_{\partial N}\cong\wedge_\C^\text{even}T\partial N^*,\]
where the final isomorphism is given by restriction, and its inverse on \(\wedge_\C^{2p}T\partial N^*\) is given by \(\alpha\mapsto \alpha-({-1})^{p}\,* \alpha\), the operator \(B\) corresponds to \((-1)^p(*_{\partial N}\dd_{\partial N}-\dd_{\partial N}*_{\partial N})\) on \(\wedge_\C^{2p}T\partial N^*.\) The latter is the boundary operator in the Atiyah--Patodi--Singer Signature Theorem~\cite[Thm.~4.14]{APS}, from which we obtain that
\[\int_N\tfrac13 p_1(\g_N) =\sigma(N)+\eta(B), \qquad\text{and}\qquad \int_M\tfrac13 p_1(\g_M) =\sigma(M)+\eta(B),\]
where we are using that \(f\) is an isometry on the boundary, hence both boundary operators have the same spectrum. Moreover, since \(f\) is an orientation-preserving diffeomorphism near the boundary, it follows that
\begin{equation}\label{eq:intaDE}
\begin{aligned}
\int_N a(D_E^+)&=-\int_N\tfrac{1}{12} p_1(\g_N)+\int_M\tfrac14 p_1(\g_M) + \tfrac12 e(\g_M)\\
&=-\tfrac14 {\sigma(N)}+\tfrac34 \sigma(M)+\tfrac12 \eta(B)+\tfrac12 \chi(M).
\end{aligned}
\end{equation}
Thus, \eqref{eq:ind-bdy} follows from \eqref{eq:APS-1} and \eqref{eq:intaDE}, as the kernel of \(\pm(*_{\partial N}\dd_{\partial N}-\dd_{\partial N}*_{\partial N})\) is the space of even harmonic forms on $\partial N\cong\partial M$, and hence \(h(B)=b_0(\partial M)+b_2(\partial M)\) by Hodge theory.
\end{proof}

\subsection{Extremality and Rigidity}
We now combine the Bochner--Lichnerowicz--Weitzenb\"ock formula with \Cref{lem:section} to prove our main result on area-extremality and area-rigidity of compact $4$-manifolds with boundary (see \Cref{def:extremality-local}), closely following the proof of \Cref{thm}, but also handling boundary terms.

\begin{theorem}\label{local}
A compact oriented Riemannian $4$-manifold \((M,\g_M)\) 
with $\sec\geq0$ such that $R+\tau\,*\succeq0$ for a nonpositive $\tau\colon M\to \R$ and convex boundary $\partial M$, i.e., \(\II_{\partial M}\succeq0\), is area-extremal with respect to
\begin{equation*} 
\mathcal C_{\partial}=\left\{f\colon (N,\g_N)\to (M,\g_M)  :\!\! \begin{array}{l}
	f|_{\partial N}\text{ is an oriented isometry onto \(\partial M\), and}\\
	 2\chi(M)+3\sigma(M)+2b_0(\partial M)+2b_2(\partial M)>\sigma(N)
\end{array}\!\!\!\right\}.
\end{equation*}
If, in addition, $\frac{\scal(\g_M)}{2}\g_M\succ\Ric(\g_M)\succ0$ at $q\in M$, then $(M,\g_M)$ is area-rigid at $q\in M$ with respect to $\mathcal C_{\partial}$.
\end{theorem}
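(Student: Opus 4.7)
The plan is to adapt the proof of \Cref{thm} in the closed case, with additional work needed to control boundary contributions to the Bochner--Lichnerowicz--Weitzenb\"ock formula. Given a competitor $f\colon (N,\g_N)\to (M,\g_M)$ in $\mathcal{C}_\partial$ satisfying \eqref{eq:scal-comp-local}, I would form the twisted Dirac operator $D_E$ on $S(TN)\otimes E$ with $E=f^*S^+(TM)$, and impose the boundary conditions defining $\Gamma^+$ in \eqref{eq:gamma+def}. The index estimate in the definition of $\mathcal{C}_\partial$ is precisely the hypothesis of \Cref{lem:section}, which supplies a nontrivial $\xi\in\ker D_E^+\subset \Gamma^+$.

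Next, apply the twisted Bochner--Lichnerowicz--Weitzenb\"ock formula \eqref{twistlich} to $D_E^2\xi=0$. Integration over $N$, with integration by parts of the connection Laplacian producing a boundary integral $\mathcal{B}_\partial(\xi)$, yields
\[0 = \int_N |\nabla\xi|^2 + \tfrac14 \int_N \scal(\g_N)|\xi|^2 + \int_N \langle\mathcal{R}(R,L)\xi,\xi\rangle + \mathcal{B}_\partial(\xi),\]
where pointwise $L=\wedge^2\dd f$ in the notation of \Cref{def:RT}. The interior analysis is identical to the closed case: \Cref{rt} rewrites $\mathcal{R}(R,L)$ as $\mathcal{T}(R,L)-\tfrac14\operatorname{tr}(L^*\circ R\circ L)-\tfrac18\scal_R$; the decomposition $\mathcal{T}(R,L)=\mathcal{T}(R+\tau\,*,L)-\tau\,\mathcal{T}(*,L)$ together with \Cref{nneg,star} and $\tau\leq 0$ shows $\mathcal{T}(R,L)\succeq 0$ on $S^+(TN)\otimes E$; and \Cref{fscal} gives $\operatorname{tr}(L^*\circ R\circ L)\leq\tfrac12\scal(\g_M)$, since $\wedge^2\g_N\succeq f^*\wedge^2\g_M$ by \eqref{eq:scal-comp-local}. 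Combined with $\scal(\g_N)\geq\scal(\g_M)\circ f$, the sum of the first three terms above is nonnegative.

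The principal new step, and the expected main obstacle, is to show $\mathcal{B}_\partial(\xi)\geq 0$. Using the collar identification \eqref{collar} and the factorization $D_E|_U=\nu(\partial_r+B+A)$ of \eqref{eq:BtildeA}, the boundary integral reduces to a sum over $\partial N$ of $\langle B\xi,\xi\rangle$ and the pointwise quadratic form of the self-adjoint endomorphism $A$. Because $\xi\in\Gamma^+$ lies in the nonpositive spectral part of $B$ by the condition $P_{>0}(\xi|_{\partial N})=0$, the $B$-piece has the favorable sign. The endomorphism $A$ measures the difference between the ambient spin connections on $S(TN)$ and $f^*S^+(TM)$ and their boundary counterparts, so, by the spin analogue of the Gauss formula, it expresses linearly in the second fundamental forms $\II_{\partial N}$ and $f^*\II_{\partial M}$. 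Tracing along an orthonormal frame, one portion of the resulting boundary quadratic form is proportional to $(H(\g_N)-H(\g_M)\circ f)\,|\xi|^2$, which is nonnegative by \eqref{eq:scal-comp-local}, while the remaining portion is controlled using the convexity hypothesis $\II_{\partial M}\succeq 0$. The hard part is to perform this algebraic identification cleanly under the spinorial identification $S(T\partial N)\cong S^+(TN)|_{\partial N}$ and the twisting by $E$, keeping track of signs so that every piece of $\mathcal{B}_\partial(\xi)$ combines with the correct sign, in the spirit of \cite{lott-spin}.

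Finally, once $\mathcal{B}_\partial(\xi)\geq 0$ is established, the combined identity forces every nonnegative term to vanish. Hence $\nabla\xi\equiv 0$ and $|\xi|$ is a nonzero constant, the scalar curvatures agree, $\scal(\g_N)=\scal(\g_M)\circ f$, and the mean curvatures agree, $H(\g_N)=H(\g_M)\circ f$. For rigidity at $q\in M$, the additional hypothesis $\tfrac{\scal(\g_M)}{2}\g_M\succ\Ric(\g_M)\succ 0$ at $q$ and the equality case of \Cref{fscal}, applied pointwise at any $p\in f^{-1}(q)$, force $\dd f(p)$ to be a linear isometry, completing the proof.
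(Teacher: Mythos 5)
Your proposal tracks the paper's argument almost step for step: same twisted Dirac operator with $E=f^*S^+(TM)$, same $\Gamma^+$ boundary condition, same use of \Cref{lem:section}, same interior estimate via \Cref{rt,nneg,star,fscal}, same integration by parts, same split of $\nabla_\nu\xi$ into the $B$-piece (nonpositive on $\Gamma^+$) and the $A$-piece. The final splitting you assert for the $A$-quadratic form --- a piece proportional to $H(\g_N)-H(\g_M)\circ f$ plus a piece made nonnegative by $\II_{\partial M}\succeq 0$ --- is exactly the estimate the paper obtains, so you have the right target.

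The one place you explicitly defer, though, is the whole content of the boundary estimate, and it is worth naming precisely what the paper does there. After the spin Gauss formula, $A$ has two summands, one per second fundamental form. The $\II_{\partial N}$-summand, after the Clifford trace, is exactly $-\tfrac12 H(\g_N)$. The $\II_{\partial M}$-summand is \emph{not} controlled by yet another ad hoc computation; the key move is to recognize it as $-\mathcal{R}(Q,L')$ for the algebraic curvature operator $Q\in\Sym^2_b(\wedge^2 V)$ defined by $Q(\widetilde\nu\wedge l(e_i))=\widetilde\nu\wedge\II_{\partial M}(l(e_i))$, $Q(l(e_i)\wedge l(e_j))=0$, and the linear isometry $l'$ extending $f|_{\partial N}$ by $l'(\nu)=\widetilde\nu$. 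Then $\II_{\partial M}\succeq 0$ gives $Q\succeq 0$, and one simply reapplies \Cref{rt}, \Cref{nneg}, and \Cref{fscal} to $Q$ and $L'=\wedge^2 l'$ (here $L'$ is nonincreasing because $l'$ is an isometry, and $\operatorname{tr} Q = H(\g_M)$), which immediately yields $-\langle A\xi,\xi\rangle\geq\tfrac12\big(H(\g_N)-H(\g_M)\circ f\big)\|\xi\|^2$. In other words, the resolution of what you flag as ``the hard part'' is not new algebra but the observation that the boundary endomorphism fits the \emph{same} pointwise formalism of \Cref{pi} already used in the interior, with $R$ replaced by $Q$. Without that identification the sign control you assert is not established; with it, your sketch becomes the paper's proof.
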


\begin{proof} 
   Let $f\colon (N,\g_N)\to (M,\g_M)$ be a competitor in $\mathcal C_{\partial}$, and let $E=f^*S^+(TM)$. As in \Cref{thm}, working pointwise with $W=T_pN$, $V=T_{f(p)}M$, $l\colon W\to V$ given by $\dd f(p)$, and $L=\wedge^2 l$, we have from \eqref{d2} that 
    \[ D_E^2=\nabla^*\nabla+\tfrac{1}{4}\scal(\g_N)+\mathcal{R}(R, L).\]
        Let \(\xi\in\Gamma(S^+(TN)\otimes E)\) be a nonzero section in the kernel of \(D_E\), which exists by \Cref{lem:section}. Using \Cref{rt,fscal,nneg,star} as in the proof of \Cref{thm}, we have:
    \begin{equation}\label{lich}
    \begin{aligned}
    0&=\int_N\langle\nabla^*\nabla\xi,\xi\rangle+\tfrac14\int_N \scal(\g_N)\|\xi\|^2 +\int_N\left<\mathcal{R}(R,L)\xi,\xi\right>\\
    &\geq \int_N\langle\nabla^*\nabla\xi,\xi\rangle+\tfrac14\int_N\big(\scal(\g_N)-\scal(\g_M)\circ f\big)\|\xi\|^2.
    \end{aligned}
    \end{equation}

    A standard computation using the divergence theorem implies that
    \begin{equation}\label{adjoint}
    \int_N\langle\nabla^*\nabla\xi,\xi\rangle=\int_N\|\nabla\xi\|^2+\int_{\partial N}\langle \nabla_\nu\xi,\xi\rangle,
    \end{equation}
    where, as above, $\nu$ is the inward unit normal along $\partial N$.
    In a local \(\g_N\)-orthonormal frame \(\{e_i\}\) of $N$ with \(e_1=-\nu\), since \(D_E\xi=0,\) we may use \eqref{eq:BtildeA} to compute
    \begin{equation}\label{eq:nablanuxi}
    \nabla_\nu\xi=\textstyle\sum\limits_{i=2}^4\nu e_i\nabla_{e_i}\xi=-B\xi- A\xi.
    \end{equation}
    As \(\xi\in \Gamma^+,\) we have \(\langle B\xi,\xi\rangle\leq 0\) from \eqref{eq:gamma+def}. To analyze the endomorphism \(A\) at the boundary, let \(\widetilde\nu\) be an the inward unit normal field of \(\partial M\). For all \(v\in T\partial N\),
    \begin{align}\label{sff}
    \nabla_{v}^{S_N}-\nabla_v^{\partial N}&=\tfrac12 \II_{\partial N}(v)\cdot \nu\in \cl(TN)|_{\partial N},\\
    \nabla_{l(v)}^{S_M}-\nabla_{l(v)}^{\partial M}&=\tfrac12 \II_{\partial M}(l(v))\cdot \widetilde\nu\in \cl(TM)|_{\partial M},
	\end{align}
    where we use the same symbol \(\II_{\partial N}\) to denote the symmetric endomorphism of \(T\partial N\) induced by $\II_{\partial N}$, namely so that \(\II_{\partial N}(x,y)=\g_N(\nabla^{N}_x y,\nu )=\g_N (\II_{\partial N}(x),y)\) for all \(x,y\in T\partial N,\) and similarly for \(\II_{\partial M}\). Using the above and \eqref{eq:BtildeA}, we compute
    \begin{align*}
    A=&-\nu\sum_{i=2}^{4}e_i\left(\nabla_{e_i}^{S_N}\otimes 1+1\otimes \nabla^{S_M}_{l(e_i)}-\nabla^{\partial N}_{e_i}\otimes 1-1\otimes \nabla^{\partial M}_{l(e_i)}\right)\\
    =&\tfrac12 \sum_{i=2}^{4} e_i\, \II_{\partial N}(e_i)\otimes 1+\tfrac12 \sum_{i=2}^{4}\nu\, e_i\otimes{\widetilde\nu}\, \II_{\partial M}(l(e_i)).
    \end{align*}
    
    By the symmetries of \(\II_{\partial N}\) and of Clifford multiplication, the first term above is \(-\tfrac12 H(\g_N).\) The second term can be written using the pointwise formalism from \Cref{pi}, as follows. Given $p\in \partial N$, since \(f|_{\partial N}\colon\partial N\to\partial M\) is an isometry and $\{e_2,e_3,e_4\}$ is an orthonormal frame of $T_p\partial N\subset W$, its image \(\{l(e_2),l(e_3),l(e_4)\}\) is an orthonormal frame of \(T_{f(p)}\partial M\subset V\). Let \(Q\colon \wedge^2 V\to \wedge^2 V\) be the symmetric endomorphism such that \(Q(\widetilde\nu\wedge l(e_i))=\widetilde\nu\wedge \II_{\partial M}(l(e_i))\) and \(Q(l(e_i)\wedge l(e_j))=0\), for \(2\leq i,j\leq 4,\) and note that $Q\in \Sym^2_b(\wedge^2 V)$, i.e., $Q$ satisfies the first Bianchi identity. Moreover, let \(l'\colon W \to V\) be the linear map such that \(l'(e_i)=l(e_i)\) for \(2\leq i\leq 4\) and \(l'(\nu)=\widetilde \nu\), and set $L'=\wedge^2 l'$. 
    From \Cref{def:RT}, we have that
    \[-\mathcal{R}(Q,L')=\sum_{i<j}e_ie_j\otimes Q(l(e_i)\wedge l(e_j))=\tfrac12\sum_{j\geq2}\nu \, e_j\otimes \widetilde\nu \,\II_{\partial M}(l(e_j)),\]
    so we conclude that \(A=-\tfrac12 H(\g_N)-\mathcal{R}(Q,L').\) Applying \Cref{rt,fscal,nneg}, since \(\II_{\partial M}\succeq0\) and \(l'\) is an isometry, it follows that \(Q\succeq0\) and
    \[-\langle A\xi,\xi\rangle\geq\tfrac12 \big(H(\g_N) - \operatorname{tr}(Q) \big)\|\xi\|^2=\tfrac12\big( H(\g_N)-H(\g_M) \big)\|\xi\|^2.\]
    From \eqref{adjoint} and the above, the left-hand side of \eqref{eq:nablanuxi} can be bounded from below:
    \begin{align*}
    \int_N\langle \nabla^*\nabla\xi,\xi\rangle&=\int_N  \|\nabla\xi\|^2-\int_{\partial N}\langle B\xi,\xi\rangle-\int_{\partial N}\langle A\xi,\xi\rangle\\
    &\geq\int_N \|\nabla\xi\|^2+\tfrac12\int_{\partial N} \big(H({\g_N})-H({\g_M})\circ f\big)\|\xi\|^2.
    \end{align*}    
    Combined with \eqref{lich}, this yields:
    \begin{align*}
    0&\geq\int_N\|\nabla\xi\|^2+\tfrac14\int_N\big(\scal(\g_N)-\scal(\g_M)\circ f\big)\|\xi\|^2\\
    &\quad+\tfrac12\int_{\partial N}\big(H({\g_N})-H({\g_M})\circ f\big)\|\xi\|^2.
    \end{align*}    
    The stated conclusions now follow exactly as in the proof of \Cref{thm}.  
\end{proof} 

\begin{remark}\label{lott-comparison}
	The boundary conditions used to compute the index in \Cref{lem:section} are Atiyah--Patodi--Singer-type conditions defined in terms of the intrinsic Dirac operator on the boundary. This introduces terms involving the second fundamental form $\II$ in the above computation, hence into the hypotheses of \Cref{local}; similarly to \cite[Thm.~1.1]{lott-spin}. That is in contrast to \cite[Thm~1.3]{lott-spin}, where  Atiyah--Patodi--Singer conditions are used, defined in terms of the ambient Dirac operator restricted to the boundary. That method avoids the assumption $\II\succeq0$, but requires stronger assumptions on the map \(f\) at the boundary.

	Our method is necessary in order to work with the bundle \(S^+(TN)\otimes f^*S^+(TM)\), as required by our curvature condition. Even in the realm of metrics with $R\succeq0$, our result extends \cite{lott-spin} in dimension 4, as it yields a version of area-extremality with the geometric conditions of \cite{lott-spin} but with a topological assumption weaker than vanishing Euler characteristic, see \Cref{ex:examples-local}. Future work will attempt a similar generalization in higher dimensions.
\end{remark}

The following result implies \Cref{mainthm-local2} in the Introduction, since \(\mathcal C^\mathrm{id}_\partial\subset  \mathcal C_\partial^{\mathrm{self}}.\)

\begin{corollary}\label{cor:boundary-self}
   A compact oriented Riemannian $4$-manifold \((M,\g_0)\) with $\sec\geq0$ such that $R+\tau\,*\succeq0$ for a nonpositive or nonnegative $\tau\colon M\to \R$ 
 and convex boundary $\partial M$, i.e., \(\II_{\partial M}\succeq0\), is area-extremal with respect to
 \begin{equation}\label{eq:classC-local-self}
   \mathcal C_\partial^{\mathrm{self}}=\big\{f\colon (M,\g_1)\to (M,\g_0) \; :  \; f|_{\partial M}\text{ is an oriented isometry onto }\partial M \big\}.
   \end{equation}
\end{corollary}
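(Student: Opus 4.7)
The plan is to apply \Cref{local} to each competitor $f\colon (M,\g_1)\to(M,\g_0)$ in $\mathcal C_\partial^{\mathrm{self}}$, i.e., with $N=M$. Two preliminary observations are in order. First, the notion of area-extremality in \Cref{def:extremality-local} is insensitive to simultaneously reversing the orientation of $M$ as source and target: the class $\mathcal C_\partial^{\mathrm{self}}$ is preserved (a self-map whose restriction to $\partial M$ is an oriented isometry remains so once the orientation of $M$ is flipped on both sides), and the inequalities in \eqref{eq:scal-comp-local} are orientation-independent. Second, flipping the orientation of $M$ reverses the sign of the Hodge star $*$, and hence flips the sign of any $\tau\colon M\to\R$ with $R+\tau\,*\succeq 0$. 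Combining these observations, we may reduce to the case $\tau\leq 0$, in which the curvature and boundary hypotheses of \Cref{local} are satisfied.

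It then remains to verify that every $f\in\mathcal C_\partial^{\mathrm{self}}$ belongs to the class $\mathcal C_\partial$ from \Cref{local}. Since $N=M$, the only nontrivial requirement is the topological inequality
\[
    2\chi(M)+3\sigma(M)+2b_0(\partial M)+2b_2(\partial M)>\sigma(M),
\]
equivalently, $\chi(M)+\sigma(M)+b_0(\partial M)+b_2(\partial M)>0$. As indicated in the discussion following \Cref{mainthm-local2}, the Soul Theorem implies that any $(M,\g_0)$ satisfying the hypotheses of the corollary is diffeomorphic to the total space of a disk bundle over a closed totally geodesic soul $S\subset M$ with $\sec\geq 0$ and $k:=\dim S\leq 3$. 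The inequality then follows from a case analysis on $k$, after which \Cref{local} delivers the conclusion.

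The main obstacle lies in this case analysis, in which the topology of $M$ and $\partial M$ must be tracked carefully against the orientability of $M$. The cases $k\in\{0,1\}$ are immediate: $M$ is diffeomorphic to $D^4$ or to $\S^1\times D^3$, and the sum equals $2$ in both. For $k=3$, orientability forces $M$ to be an $I$-bundle over an orientable or non-orientable soul $S$, with $\chi(M)=\sigma(M)=0$ (any $2$-cycle can be pushed off itself along the $I$-direction, killing the intersection form) and $b_0(\partial M)\geq 1$, so the sum is positive. The most delicate subcase is $k=2$: here $S$ is a closed surface admitting $\sec\geq 0$, and the orientable disk bundles $M\to S$ are enumerated by their Euler number $n$. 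One then computes $\sigma(M)$ from the rank-one intersection form on the image of $H^2(M,\partial M)\to H^2(M)$, and identifies $\partial M$ (a lens space, the product $\S^2\times \S^1$, the three-torus $T^3$, or a nilmanifold, according to $S$ and $n$) to read off its Betti numbers. A subcase-by-subcase verification shows that the sum is always at least $2$, completing the proof.
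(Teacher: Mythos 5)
Your proposal is correct and follows essentially the same route as the paper: reduce to $\tau\leq 0$ by an orientation flip, apply \Cref{local}, and verify the topological inequality via the Soul Theorem with a case analysis on $\dim\Sigma\in\{0,1,2,3\}$. The only (minor, stylistic) differences are that the paper orients the source $N$ independently---so that $\sigma(N)=\pm\sigma(M)$---rather than flipping both sides simultaneously as you do, and its $\dim\Sigma=2$ subcase is more economical, getting by with the rough bounds $|\sigma(M)|\leq 1$ and $b_2(\partial M)\geq 2$ when $\Sigma\cong T^2$ instead of fully identifying $\partial M$ as a lens space, $\S^2\times\S^1$, $T^3$, or nilmanifold.
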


\begin{proof}
    Choose the orientation of \(M\) for which \(\tau\leq0\), and let \(N\) be either \(M\) or \(\overline M\), i.e., $M$ with an orientation that will be fixed later. Let \(f\colon N\to M\) be a boundary-preserving spin map and \(\g_1\) be a metric on \(N\) such that 
    \[\scal(\g_1)\geq \scal(\g_0)\circ f, \quad \wedge^2\g_1\succeq f^*\wedge^2\g_0, \quad  H({\g_1})\geq H({\g_0})\circ f,\] and \(f|_{\partial N}\) is an isometry between \(\g_1|_{\partial N}\) and \(\g_0|_{\partial N}.\) Orient \(N\) such that  \(f|_{\partial N}\) \emph{preserves} orientation, i.e., is an oriented isometry. The conclusion will follow from \Cref{local} by showing that the topological condition in $\mathcal C_{\partial}$ is satisfied by $f\colon N\to M$. 
    
    By the Soul Theorem, see e.g.~\cite[Sec.~9-10]{eschenburg},
    since $(M,\g_0)$ has \(\sec\geq0\) and \(\II_{\partial M}\succeq0 \), it is diffeomorphic to the total space of a disk bundle over a closed totally geodesic submanifold \(\Sigma\subset M\). We proceed case-by-case in terms of \(0\leq \dim\Sigma\leq 3\).
    
    If \(\dim\Sigma\leq 1,\) then \(\chi(M)=\chi(\Sigma)\geq0\), and \(H^2(M)=H^2(\Sigma)=0\), which implies \(\sigma(M)=\sigma(N)=0.\)  Thus, the topological condition in $\mathcal C_{\partial}$ is satisfied, as \(b_0(\partial M)>0.\)
    
    If \(\dim\Sigma=3,\) then \(M\to\Sigma\) is an interval bundle which restricts to a covering map \(\partial M\to \Sigma.\) Such a map is homotopic to the inclusion \(\partial M\to M\), and thus \(H^2(M)\to H^2(\partial M)\) is injective and \(H^2(M,\partial M)\to H^2(M)\) is trivial. Therefore, \(\sigma(M)=\sigma(N)=0\), so the conclusion follows as in the previous case, since $\chi(\Sigma)=0$.
   
Finally, assume \(\dim\Sigma=2,\) in which case the Gauss--Bonnet Theorem implies that \(\chi(\Sigma)\geq0\). If \(\Sigma\) is nonorientable, then \(H^2(M)=H^2(\Sigma)=0\) and \(\sigma(M)=0\), so the topological condition in $\mathcal C_{\partial}$ is satisfied as in the previous cases. Else, \(\Sigma\) is orientable, so \(b_2(M)=b_2(\Sigma)=1\) and \(|\sigma(M)|\leq 1.\) If \(\Sigma\) is diffeomorphic to \(\S^2\), then \(2\chi(M)+3\sigma(M)-\sigma(N)=4+(3\pm1)\sigma(M)\geq0\) and hence the topological condition in $\mathcal C_{\partial}$ is again satisfied. If \(\Sigma\) is diffeomorphic to \(T^2\), then \(\partial M\) is diffeomorphic to an oriented \(\S^1\)-bundle over \(T^2\) so \(b_2(\partial M)\geq 2.\) Once again, the topological condition in $\mathcal C_{\partial}$ is satisfied, as \(2b_0(\partial M)+2b_2(\partial M)\geq 6\geq(-3\pm1)\sigma(M)\).
\end{proof}

Our final main result implies \Cref{mainthm-local1} in the Introduction because \(\mathcal C^\mathrm{id}_\partial\subset \mathcal C_\mathrm{loc}\) if $|\sigma(M)|<4$, and is captured by the vague but clear statement that, near a point where $\sec>0$, a Riemannian metric on a $4$-manifold cannot be modified to have greater scalar curvature without decreasing the area of some tangent $2$-plane. 

\begin{corollary}
    If \((X,\g)\) is a Riemannian \(4\)-manifold and \(p\in X\) is a point at which $\sec>0$, then there is a neighborhood \(M\cong D^4\) of \(p\) such that \(\g|_M\) is area-extremal with respect to
    \begin{equation*}
    \mathcal C_\mathrm{loc}=\left\{f\colon (N,\g_N)\to (M,\g|_M) :
    \!\! \begin{array}{l}
	f|_{\partial N}\text{ is an oriented isometry onto \(\partial M\),} \\
	 \text{and }|\sigma(N)|<4 \end{array} \right\}.
    \end{equation*}
\end{corollary}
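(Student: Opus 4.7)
The plan is to apply \Cref{local} to a sufficiently small geodesic ball $M$ around $p$, with $\tau$ chosen as a suitable constant via the Finsler--Thorpe trick.

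First I would take $M$ to be the closed geodesic ball of radius $\rho>0$ centered at $p$, for $\rho$ small enough that the exponential map provides a diffeomorphism $M\cong D^4$; then $\chi(M)=1$, $\sigma(M)=0$, $\partial M\cong \S^3$, $b_0(\partial M)=1$, and $b_2(\partial M)=0$. By continuity of the curvature operator on $X$, after further shrinking $\rho$, one has $\sec>0$ on all of $M$; moreover, standard Riccati comparison shows that geodesic spheres of sufficiently small radius have positive-definite second fundamental form, so $\II_{\partial M}\succ 0$ (in particular, $M$ is convex).

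Next, applying \Cref{prop:FTtrick} at $p$, the set of $\tau\in\R$ with $R+\tau\,*\succ 0$ at $p$ is a nonempty open interval. Fix a constant $\tau_0$ in this interval. Since the eigenvalues of $R+\tau_0\,*$ depend continuously on the base point and are strictly positive at $p$, after shrinking $\rho$ once more we have $R+\tau_0\,*\succeq 0$ on all of $M$. By reversing the orientation of $M$ if necessary, which negates $*$ and hence $\tau_0$, we may assume $\tau_0\leq 0$.

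With these choices, \Cref{local} applies to $(M,\g|_M)$. For a competitor $f\colon(N,\g_N)\to(M,\g|_M)$ in $\mathcal C_\mathrm{loc}$, the topological hypothesis defining $\mathcal C_\partial$ becomes
\begin{equation*}
2\chi(M)+3\sigma(M)+2b_0(\partial M)+2b_2(\partial M)=4>\sigma(N),
\end{equation*}
which is precisely ensured by $|\sigma(N)|<4$. Thus $\mathcal C_\mathrm{loc}\subset\mathcal C_\partial$ and area-extremality follows. The one step requiring care is orientation tracking: had the initial orientation of $M$ forced $\tau_0>0$, reversing the orientation of $M$ also reverses the orientation of $N$ (since $f|_{\partial N}$ is required to be an oriented isometry), replacing $\sigma(N)$ by $-\sigma(N)$; but the \emph{symmetric} hypothesis $|\sigma(N)|<4$ still secures the inequality $\sigma(N)<4$ needed to invoke \Cref{local}. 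Aside from this bookkeeping, the proof is a direct assembly of \Cref{prop:FTtrick} and \Cref{local}.
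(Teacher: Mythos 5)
Your proof is correct and follows essentially the same route as the paper: shrink to a small convex ball so $\sec>0$ and $\II_{\partial M}\succeq0$, invoke the Finsler--Thorpe trick to produce $\tau$ with $R+\tau\,*\succeq0$, flip orientation if needed to make $\tau$ nonpositive, verify the topological inequality $4>\sigma(N)$ from $|\sigma(N)|<4$, and apply \Cref{local}. The one stylistic difference is that you fix a \emph{constant} $\tau_0$ valid at $p$ and then shrink $M$ so $R+\tau_0\,*\succeq0$ persists, whereas the paper selects a continuous nonvanishing function $\tau$ on $M$; both are valid, and your choice is arguably cleaner since it sidesteps the need to argue $\tau(p)\neq0$.
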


\begin{proof}
    At each stage of the proof, we shrink \(M\) if needed, in order to make a series of assumptions. First, we may choose a neighborhood \(M\) of $p\in X$ such that $\sec>0$ at all points of $M$.  Then, by the Finsler--Thorpe trick (see \Cref{prop:FTtrick}), there exists a function \(\tau\colon M\to \R\) such that the curvature operator \(R\) of \((M,\g|_M)\) satisfies \(R+\tau\,* \succ0.\)
    Moreover, there is an open interval $(\tau_\mathrm{min}(x),\tau_\mathrm{max}(x))$ of possible values for $\tau$ at each point $x\in M$ that depends continuously on $x$, so we may choose $\tau$ to be continuous and \(\tau(p)\neq0.\) Shrinking \(M\) if necessary, we may assume \(\tau\neq 0\) throughout \(M,\) and, changing the orientation of \(M\) if necessary, we may assume \(\tau<0\) on all $M$. Finally, we shrink \(M\) to be the closure of a convex ball in $X$ containing $p\in X$. Then \((M,\g|_M)\) satisfies all the hypotheses of \Cref{local} and, since \(3\sigma(M)+2\chi(M)+2b_0(\partial M)+2b_2(\partial M)=4,\) the corollary follows.
\end{proof}

\begin{example}\label{ex:examples-local}
Let \(\S^n_+\) denote the unit \(n\)-dimensional hemisphere. Standard metrics on \(\S^4_+,\) \(\S^1\times\S^3_+,\) \(\S^2\times \S^2_+\), and \(\S^3\times [-1,1]\) have positive-semidefinite curvature operator and totally geodesic boundary, and are hence area-extremal with respect to $\mathcal C_\partial$ by \Cref{local}. In particular, they are area-extremal with respect to \(\mathcal{C}_\partial^\mathrm{self}\subset\mathcal C_\partial\) defined in \eqref{eq:classC-local-self}. The first and third spaces are also area-rigid, as they satisfy the additional hypothesis $\frac{\scal}{2}\g\succ\Ric\succ0$.  Note that the second and fourth spaces have vanishing Euler characteristic. Similar results, with different boundary conditions, are implied by \cite[Thm.~1.3]{lott-spin}; see also \Cref{lott-comparison}.

The $\mathsf{SU}(2)$-invariant metrics with totally geodesic boundary on the normal disk bundle \(\nu(\C P^1)\) of $\C P^1\subset\C P^2$, described in the proof of \Cref{thm:cheeger-metrics}, also satisfy the hypotheses of \Cref{local}. Thus, these metrics are area-extremal with respect to \(\mathcal{C}_\partial\) and \(\mathcal{C}_\partial^\mathrm{self}\). Moreover, if such a metric has vanishing neck length, then it is area-rigid with respect to diffeomorphisms. Indeed, the neck in that case consists solely of the boundary $\partial \nu(\C P^1)$ and $\frac{\scal}{2}\g\succ\Ric\succ0$ everywhere else, so the same continuity argument in \Cref{rem:noneck} applies. This example is notable because \(\nu(\C P^1)\) does not admit metrics with both \(R\succeq 0\) and \(\II_{\partial\nu(\C P^1)}\succeq0,\) to which previous methods could be applied to prove area-extremality: a simply-connected manifold admitting such a metric is diffeomorphic to a \emph{trivial} disk bundle over its soul, see e.g.~\cite{Noronha}.
\end{example}

\providecommand{\bysame}{\leavevmode\hbox to3em{\hrulefill}\thinspace}
\providecommand{\MR}{\relax\ifhmode\unskip\space\fi MR }
\providecommand{\MRhref}[2]{%
  \href{http://www.ams.org/mathscinet-getitem?mr=#1}{#2}
}
\providecommand{\href}[2]{#2}

\end{document}